\numberwithin{equation}{section}
\newcommand{\C}{\mathbb{C}}
\newcommand{\N}{\mathbb{N}}
\newcommand{\R}{\mathbb{R}}
\newcommand{\mm}{{\mbox{\boldmath$m$}}}
\newcommand{\smm}{{\mbox{\scriptsize\boldmath$m$}}}
\newcommand{\ggamma}{{\mbox{\boldmath$\gamma$}}}
\newcommand{\ppi}{{\mbox{\boldmath$\pi$}}}
\newcommand{\ssigma}{{\mbox{\boldmath$\sigma$}}}
\newcommand{\sggamma}{{\mbox{\scriptsize\boldmath$\gamma$}}}
\newcommand{\sfd}{{\sf d}}
\newcommand{\rme}{{\mathrm e}}
\newcommand{\Kliminf}{K\kern-3pt-\kern-2pt\mathop{\rm lim\,inf}\limits}  
\newcommand{\supp}{\mathop{\rm supp}\nolimits}   
\newcommand{\Lip}{\mathop{\rm Lip}\nolimits}          
\renewcommand{\d}{{\mathrm d}}
\newcommand{\dt}{{\d t}}
\newcommand{\restr}[1]{\lower3pt\hbox{$|_{#1}$}}
\newcommand{\Leb}[1]{{\mathscr L}^{#1}}      
\newcommand{\la}{{\langle}}                  
\newcommand{\ra}{{\rangle}}
\newcommand{\eps}{\varepsilon}  
\newcommand{\nchi}{{\raise.3ex\hbox{$\chi$}}}
\newcommand{\forevery}{\text{for every }}
\newcommand{\Pc}[2]{\overline{#1}\kern-2pt^{\vphantom 0}_{#2}}
\newcommand{\Probabilities}[1]{\mathscr P(#1)}          
\newcommand{\ProbabilitiesTwo}[1]{\mathscr P_2(#1)}     
\newenvironment{proof}{\removelastskip\par\medskip   
\noindent{\em Proof}
\rm}{\penalty-20\null\hfill$\square$\par\medbreak}
\newtheorem{theorem}{Theorem}[section]
\newtheorem{lemma}[theorem]{Lemma}
\newtheorem{proposition}[theorem]{Proposition}
\newtheorem{definition}[theorem]{Definition}
\newtheorem{example}[theorem]{Example}
\newtheorem{remark}[theorem]{Remark}
\newcommand{\ent}[1]{{\rm Ent}_{\smm}(#1)}
\newcommand{\entv}{{\rm Ent}_{\smm}}
\newcommand{\entr}[2]{{\rm Ent}_{#2}(#1)}
\newcommand{\prob}{\Probabilities}
\newcommand{\probt}{\ProbabilitiesTwo}
\newcommand{\adm}{\textrm{\sc{Adm}}}
\newcommand{\opt}{\textrm{\sc{Opt}}}
\newcommand{\Id}{{\rm Id}}
\newcommand{\lims}{\varlimsup}
\newcommand{\limi}{\varliminf}
\newcommand{\geo}{\rm Geo}
\newcommand{\gopt}{{\rm{GeoOpt}}}                   
\newcommand{\e}{{\rm{e}}}                           
\newcommand{\fr}{\hfill$\blacksquare$}                      
\newcommand{\sppi}{{\mbox{\scriptsize\boldmath$\pi$}}}      
\newcommand{\relgrad}[1]{|D #1|_*} 
\newcommand{\weakgrad}[1]{|D #1|_w} 
\newcommand{\cgrad}[1]{|D #1|_C} 
\renewcommand{\mm}{\mathfrak m}
\renewcommand{\smm}{{\mbox{\scriptsize$\mm$}}}
\newcommand{\compression}{deformation}
\newcommand{\X}{{Y}}
\newcommand{\sfdX}{\sfd_\X}
\newcommand{\x}{y}
\newcommand{\frho}{f}
\newcommand{\hsigma}{h}
\newcommand{\intX}{\int_X}
\newcommand{\GGG}{\color{blue}}
\renewcommand{\C}{\mathsf{Ch}}
\title{Heat flow and calculus on metric measure spaces with \\ Ricci curvature bounded below - the compact case
\footnote{To the memory of Enrico Magenes, whose exemplar life, research
and teaching shaped generations of mathematicians}}
\begin{document}
\author{Luigi Ambrosio\
   \thanks{Scuola Normale Superiore, Pisa. email: \textsf{l.ambrosio@sns.it}}
   \and
   Nicola Gigli\
   \thanks{Nice University. email: \textsf{nicola.gigli@unice.fr}}
 \and
   Giuseppe Savar\'e\
   \thanks{Universit\`a di Pavia. email: \textsf{giuseppe.savare@unipv.it}
   }}

\maketitle

\begin{abstract}
 We provide a quick overview of various calculus tools and of the 
 main results concerning the heat
 flow  on compact
 metric measure spaces,
 with applications to spaces with lower Ricci curvature bounds.

 Topics include the  Hopf-Lax semigroup and the Hamilton-Jacobi equation
 in metric spaces, a new approach to differentiation and to the theory of Sobolev spaces over
 metric measure spaces, 
 the equivalence of the $L^2$-gradient flow of a suitably defined 
 ``Dirichlet energy'' and the Wasserstein gradient flow of the
 relative entropy functional, a metric version of Brenier's Theorem, 
and a new (stronger) definition of Ricci curvature bound from below
for metric measure spaces. This new notion is stable w.r.t. measured
Gromov-Hausdorff convergence and 
it is strictly connected with the linearity of the heat flow.
\end{abstract}
\tableofcontents

\section{Introduction}

Aim of these notes is to provide a quick overview of the main
results contained in \cite{Ambrosio-Gigli-Savare11} and
\cite{Ambrosio-Gigli-Savare11bis} in the simplified case of compact
metric spaces $(X,\sfd)$ endowed with a reference probability
measure $\mm$. The idea is to give the interested reader the
possibility to get as quickly as possible the key ideas behind the
proofs of our recent results, neglecting all the problems that
appear in a more general framework (as a matter of fact, no
compactness assumption is made in
\cite{Ambrosio-Gigli-Savare11,Ambrosio-Gigli-Savare11bis} and
finiteness of $\mm$ is assumed only in
\cite{Ambrosio-Gigli-Savare11bis}). Passing from compact spaces to complete and separable ones (and even
to a more general framework which includes the so-called Wiener
space) is not just a technical problem, meaning that several
concepts need to be properly adapted in order to achieve such
generality.  Hence, in
particular, the discussion here is by no means exhaustive, as both
the key statements and the auxiliary lemmas are stated in the
simplified case of a probability measure in a compact space.

Apart some very basic concept about optimal transport, Wasserstein
distance and gradient flows, this paper pretends to be
self-contained. All the concepts that we need are recalled in the
preliminary section, whose proofs can be found, for instance, in the
first three chapters of \cite{Ambrosio-Gigli11} (for an overview on
the theory of gradient flows, see also
\cite{Ambrosio-Gigli-Savare08}, and for a much broader discussion on
optimal transport, see the monograph by Villani \cite{Villani09}).
For completeness reasons, we included in our discussion some results
coming from previous contributions which are potentially less known,
in particular: the (sketch of the) proof by Lisini \cite{Lisini07}
of the characterization of absolutely continuous curves w.r.t. the
Wasserstein distance (Proposition~\ref{prop:lisini}), and the proof
of uniqueness of the gradient flow of the relative entropy w.r.t.
the Wasserstein distance on spaces with Ricci curvature bounded
below in the sense of Lott-Sturm-Villani ($CD(K,\infty)$ spaces in
short) given by the second author in \cite{Gigli10}
(Theorem~\ref{thm:gfent}).

In summary, the main arguments and results that we present here are the following.
\begin{itemize}
\item[(1)] The Hopf-Lax formula produces subsolutions of the Hamilton-Jacobi
equation, and solutions on geodesic spaces (Theorem~\ref{thm:subsol}
and Theorem~\ref{thm:supersol}).
\item[(2)] A new approach to the theory of Sobolev spaces over metric measure spaces, which leads in particular
to the proof that \emph{Lipschitz functions are always dense in
energy in $W^{1,2}(X,\sfd,\mm)$} (Theorem~\ref{thm:lipdense}).
\item[(3)] The uniqueness of the
  gradient flow w.r.t.\ the Wasserstein distance $W_2$ of the relative entropy
in $CD(K,\infty)$ spaces (Theorem~\ref{thm:gfent}).
\item[(4)] The identification of the $L^2$-gradient flow of the natural ``Dirichlet energy'' and the $W_2$-gradient flow
of the relative entropy in $CD(K,\infty)$ spaces (see also
\cite{GigliKuwadaOhta10} for the Alexandrov case, a paper to which
our paper \cite{Ambrosio-Gigli-Savare11} owes a lot).
\item[(5)] A metric version of Brenier's theorem valid in spaces having Ricci curvature bounded from below
in a sense slightly stronger than the one proposed by
Lott-Sturm-Villani. 
 If this curvature
assumption holds (Definition~\ref{def:strongcd}) and $\mu,\,\nu$ are
absolutely continuous w.r.t. $\mm$, then ``the distance traveled is
uniquely determined by the starting point'', i.e. there exists a map
$D:X\to\R$ such that for any optimal plan $\ggamma$ it holds
$\sfd(x,y)=D(x)$ for $\ggamma$-a.e.~$(x,y)$. Moreover, the map $D$ is
nothing but the weak gradient (according to the theory illustrated
in Section~\ref{se:Sobolev}) of any Kantorovich potential. See
Theorem~\ref{prop:potweak}.
\item[(6)] A key lemma (Lemma \ref{le:horver}) concerning ``horizontal'' and ``vertical'' differentiation:
it allows to compare the derivative of the squared Wasserstein
distance along the heat flow with the derivative of the
relative entropy along a geodesic.
\item[(7)] A new (stronger) definition of Ricci curvature bound from below for
  metric measure spaces which is
stable w.r.t. measured Gromov-Hausdorff
convergence and rules out Finsler geometries
(Theorem~\ref{thm:riemannian} and the discussion thereafter).
\end{itemize}

\smallskip
\noindent {\bf Acknowledgement.} The authors acknowledge the support
of the ERC ADG GeMeThNES and
the PRIN08-grant from MIUR for the project \emph{Optimal transport
  theory, geometric and functional inequalities, and
  applications}.
  
The authors also thank A.Mondino for his
careful reading of a preliminary version of this manuscript.

\section{Preliminary notions}\label{sec:preliminary}

As a general convention, we will always denote 
by $(X,\sfd)$ 
a compact metric space and by $\mm$ a Borel probability measure on $X$;
we will always refer to the structure $(X,\sfd,\mm)$ as a compact and normalized
metric measure space. We will use the symbol $(Y,\sfd_Y)$ for metric spaces
when the compactness is not implicitly assumed.

\subsection{Absolutely continuous curves and slopes}

Let $(\X,\sfdX)$ be a complete and separable metric space,
 $J\subset\R$ an interval with
nonempty interior and $J\ni t\mapsto \gamma_t\in \X$. We say that $\gamma_t$ is
\emph{absolutely continuous} if
$$
\sfdX(\gamma_s,\gamma_t)\leq\int_t^sg(r)\,\d r,\qquad\forall s,\,t\in J,\,\,t<s
$$
for some $g\in L^1(J)$. It turns out that, if $\gamma_t$ is absolutely
continuous, there is a minimal function $g$ with this property,
called \emph{metric speed} and given for a.e.~$t\in J$ by
$$
|\dot{\gamma_t}|=\lim_{s\to t}\frac{\sfdX(\gamma_s,\gamma_t)}{|s-t|}.
$$
See \cite[Theorem~1.1.2]{Ambrosio-Gigli-Savare08} for the simple
proof. Notice that the absolute continuity property of the
integral ensures that absolutely continuous functions can be extended
by continuity to the closure of their domain.

We will denote by $C([0,1],\X)$ the space of continuous curves on
$[0,1]$ with values in $\X$ endowed with the $\sup$ norm. The set
$AC^2([0,1],\X)\subset C([0,1],\X)$ consists of all absolutely
continuous curves $\gamma$ such that $\int_0^1|\dot\gamma_t|^2\,\d
t<\infty$: it is easily seen to be equal to the countable union of
the closed sets $\{\gamma:\int_0^1|\dot\gamma_t|^2\,\d t\leq n\}$,
and thus it is a Borel subset of $C([0,1],\X)$. The \emph{evaluation
maps} $\e_t:C([0,1],\X)\to \X$ are defined by
\[
\e_t(\gamma):=\gamma_t,
\]
and are clearly $1$-Lipschitz.

We say that a subset $D$ of $\X$  is geodesic 
if for any $x,\,y\in D$
there exists a curve $(\gamma_t) \subset D$ on $[0,1]$ such that $\gamma_0=x$,
$\gamma_1=y$ and $\sfdX(\gamma_t,\gamma_s)=|t-s|\sfdX(x,y)$ for all
$s,\,t\in [0,1]$. Such a curve is called constant speed geodesic, or
simply geodesic. The space of all geodesics in $\X$ endowed with the sup
distance will be denoted by $\geo(\X)$.

Given $f:\X\to\R\cup\{\pm\infty\}$ we define the \emph{slope} (also
called local Lipschitz constant) at points $x$ where $f(x)\in\R$ by
$$
|D f|(x):=\lims_{y\to x}\frac{|f(y)-f(x)|}{\sfdX(y,x)}.
$$

We shall also need the one-sided counterparts of the slope  called respectively \emph{descending slope} and
\emph{ascending slope}:
\begin{equation}\label{eq:slopes}
|D^- f|(x):=\lims_{y\to x}\frac{[f(y)-f(x)]^-}{\sfdX(y,x)},\qquad
|D^+ f|(x):=\lims_{y\to x}\frac{[f(y)-f(x)]^+}{\sfdX(y,x)},
\end{equation}
where $[\cdot]^+$ and $[\cdot]^-$ denote respectively the positive and negative part. Notice the change of notation w.r.t. previous works of the authors: the slopes and its one-sided counterparts were denoted by $|\nabla f|$, $|\nabla^\pm f|$. Yet, as remarked in \cite{Gigli12bis}, these notions, being defined in duality with the distance, are naturally cotangent notions, rather than tangent ones, whence the notation proposed here.

It is not difficult to see that for $f$ Lipschitz the  slopes and
the local Lipschitz constant are upper gradients according to
\cite{Heinonen-Koskela98}, namely
$$
\left| \int_{\partial\gamma} f\right|
\leq\int_\gamma|D^\pm f|
$$
for any absolutely continuous curve $\gamma:[0,1]\to \X$;
here and in the following we write 
$\int_{\partial\gamma}f$ for $f(\gamma_1)-f(\gamma_0)$ and
$\int_\gamma g$ for $\int_0^1
g(\gamma_s)|\dot \gamma_s|\,\d s.$

Also, for $f,\,g:\X\to\R$ Lipschitz it clearly holds
\begin{subequations}
\begin{align}
\label{eq:subadd}
|D(\alpha f+\beta g)|&\leq|\alpha||D f|+|\beta||D g|,\qquad\forall \alpha,\beta\in\R;\\
\label{eq:leibn} |D (fg)|&\leq |f||D g|+|g||D f|.
\end{align}
\end{subequations}
\subsection{The space $(\prob X,W_2)$}

Let $(X,\sfd)$ be a compact metric space. The set $\prob X$ consists
of all Borel probability measures on $X$. 
As usual, if $\mu\in \prob X$ and $T : X \to Y$ is a $\mu$-measurable map
with values in the topological space $Y$, 
the push-forward measure $T_\sharp \mu\in \prob Y$ is defined by
$T_\sharp \mu(B) := \mu(T^{-1}(B))$ for every set Borel set $B\subset
Y$.

Given $\mu,\,\nu\in\prob
X$, we define the Wasserstein distance $W_2(\mu,\nu)$ between them
as
\begin{equation}
W_2^2(\mu,\nu):=\min\int \sfd^2(x,y)\,\d\ggamma(x,y),\label{eq:1}
\end{equation}
where the minimum is taken among all Borel probability measures
$\ggamma$ on $X^2$ such that
\[
\pi^1_\sharp \ggamma=\mu,\qquad
\pi^2_\sharp \ggamma=\nu;\qquad
\text{here }\pi^i:X^2\to X,\quad \pi^i(x_1,x_2):=x_i.
\]
Such measures are called admissible plans or couplings for the couple
$(\mu,\nu)$; 
a plan $\ggamma$ which realizes the minimum in \eqref{eq:1} is
called {optimal}, and we write $\ggamma\in\opt(\mu,\nu)$. From the
linearity of the admissibility condition we get that the squared
Wasserstein distance is convex, i.e.:
\begin{equation}
\label{eq:w2conve} W_2^2\big((1-\lambda)\mu_1+\lambda
\nu_1,(1-\lambda)\mu_2+\lambda\nu_2\big)\leq(1-\lambda)
W_2^2(\mu_1,\nu_1)+\lambda W_2^2(\mu_2,\nu_2).
\end{equation}
It is also well known (see e.g. Theorem 2.7 in
\cite{Ambrosio-Gigli11}) that the Wasserstein distance metrizes the
weak convergence of measures in $\prob X$, i.e.\ the weak convergence with respect
to the duality with $C(X)$; in particular $(\prob
X,W_2)$ is a compact metric space.

An equivalent definition of $W_2$ comes from the dual formulation of the transport problem:
\begin{equation}
\label{eq:dualitabase}
\frac12W_2^2(\mu,\nu)=\sup_{\psi}\intX \psi\, \d\mu+\intX \psi^c\,\d\nu,
\end{equation}
the supremum being taken among all Lipschitz functions $\psi$, where
the $c$-transform in this formula is defined by
\[
\psi^c(y):=\inf_{x\in X}\frac{\sfd^2(x,y)}2-\psi(x).
\]
A function $\psi:X\to\R$ is said to be $c$-concave if $\psi=\phi^c$
for some $\phi:X\to\R$. 
It is possibile to prove that 
the supremum in \eqref{eq:dualitabase} is
always achieved by a $c$-concave function, and we will call any such
function $\psi$ a Kantorovich potential. We shall also use the fact
that $c$-concave functions satisfy
\begin{equation}\label{eq:involution}
\psi^{cc}=\psi.
\end{equation}

The (graph of the) $c$-superdifferential $\partial^c\psi$ of a
$c$-concave function $\psi$ is the subset of $X^2$ defined by
\[
\partial^c\psi:=\Big\{(x,y)\ :\ \psi(x)+\psi^c(y)=\frac{\sfd^2(x,y)}2\Big\},
\]
and the $c$-superdifferential $\partial^c\psi(x)$ at $x$ is the set
of $y$'s such that $(x,y)\in\partial^c\psi$. A consequence of the
compactness of $X$ is that any $c$-concave function $\psi$ is
Lipschitz and that the set $\partial^c\psi(x)$ is non empty for any
$x\in X$.

It is not difficult to see that if $\psi$ is a Kantorovich potential for
$\mu,\nu\in \prob X$ and $\ggamma$ is a coupling for $(\mu,\nu)$ then
$\ggamma$ is optimal if and only if
$\supp(\gamma)\subset \partial^c\psi$.

If $(X,\sfd)$ is geodesic,
then so is $(\prob X,W_2)$, and in this case a curve $(\mu_t)$ is a
constant speed geodesic from $\mu_0$ to $\mu_1$ if and only if there
exists a measure $\ppi\in\prob{C([0,1],X)}$ concentrated on
$\geo(X)$ such that $(\e_t)_\sharp\ppi=\mu_t$ for all $t\in [0,1]$
and $(\e_0,\e_1)_\sharp\in\opt(\mu_0,\mu_1)$. We will denote the set
of such measures, called optimal geodesic plans, by
$\gopt(\mu_0,\mu_1)$.

\subsection{Geodesically convex functionals and their gradient flows}\label{se:prelGF}

Given a geodesic space $(\X,\sfd_\X)$ (in the following this will
always be the Wasserstein space built over a geodesic space
$(X,\sfd)$), a functional $E:\X\to\R\cup\{+\infty\}$ is said
$K-$geodesically convex (or simply $K$-convex) if for any
$y_0,\,y_1\in \X$ there exists a constant speed geodesic
$\gamma:[0,1]\to \X$ such that $\gamma_0=y_0$, $\gamma_1=y_1$ and
\[
E(\gamma_t)\leq (1-t)E(y_0)+tE(y_1)-\frac
K2t(1-t)\sfd_\X^2(y_0,y_1),\qquad\forall t\in[0,1].
\]
We will denote by $D(E)$ the domain of $E$ i.e.
$D(E):=\{y:E(y)<\infty\}$: if $E$ is $K-$geodesically convex,
then
$D(E)$ is geodesic.

An easy consequence of the $K$-convexity is the fact that the
descending slope defined in \eqref{eq:slopes} can de computed as a
sup, rather than as a limsup:
\begin{equation}
\label{eq:slopegeodetiche} |D^-E|(y)=\sup_{z\neq
y}\left(\frac{E(y)-E(z)}{\sfd_\X(y,z)}+\frac{K}{2}\sfd_\X(y,z)\right)^+.
\end{equation}

What we want to discuss here is the definition of gradient flow of a
$K$-convex functional. There are essentially two different ways of
giving such a notion in a metric setting. The first one, which we
call Energy Dissipation Equality (EDE), ensures existence for any
$K$-convex and lower semicontinuous functional (under suitable
compactness assumptions), the second one, which we call Evolution
Variation Inequality (EVI), ensures uniqueness and $K$-contractivity
of the flow. However, the price we pay for these stronger properties
is that existence results for EVI solutions hold under much more
restrictive assumptions.

It is important to distinguish the two notions. The EDE one is the
``correct one'' to be used in a general metric context, because it
ensures existence for any initial datum in the domain of the
functional. However, typically gradient flows in the EDE sense are
not unique: this is the reason of the analysis made in
Section~\ref{se:gfent}, which ensures that for the special case of
the entropy functional uniqueness is indeed true.

EVI gradient flows are in particular gradient flows in the EDE sense
(see Proposition \ref{prop:giuseppe}), ensure  uniqueness,
$K$-contractivity and provide strong a priori regularizing effects.
Heuristically speaking, existence of gradient flows in the EVI sense
depends also on properties of the distance, rather than on
properties of the functional only. A more or less correct way of
thinking at this is: gradient flows in the EVI sense exist if and
only if the distance is Hilbertian on small scales.  For instance,
if the underlying metric space is an Hilbert space, then the two
notions coincide.

Now recall that one of our goals here is to study the gradient flow
of the relative entropy in spaces with Ricci curvature bounded
below (Definition~\ref{def:cdkinfty}), and recall that Finsler
geometries are included in this setting (see page 926 of
\cite{Villani09}). Thus, in general we must deal with the EDE notion
of gradient flow. The EVI one will come into play in
Section~\ref{se:riemricc}, where we use it to identify those spaces
with Ricci curvature bounded below which are more `Riemannian like'.

\noindent {\bf Note}: later on we will refer to gradient flows in
the EDE sense simply as ``gradient flows'', keeping the
distinguished notation ${\rm EVI}$-gradient flows for those in the
EVI sense.

\subsubsection{Energy Dissipation Equality}

An important property of $K$-geodesically convex and lower
semicontinuous functionals  (see Corollary 2.4.10 of
\cite{Ambrosio-Gigli-Savare08} or Proposition 3.19 of \cite{Ambrosio-Gigli11}) is that the descending slope is an
upper gradient, that is: for any absolutely continuous curve
$y_t:J\subset\R\to D(E)$ it holds
\begin{equation}
\label{eq:slopestrongupper} |E(y_t)-E(y_s)|\leq \int_t^s |\dot
y_r||D^- E|(y_r)\,\d r,\qquad\forall t\leq s.
\end{equation}
An application of Young inequality gives that
\begin{equation}
\label{eq:boundtuttecurve}
E(y_t)\leq E(y_s)+\frac12\int_t^s|\dot y_r|^2\,\d r
+\frac12\int_t^s|D^- E|^2(y_r)\,\d r,\qquad\forall t\leq s.
\end{equation}
This inequality motivates the following definition:
\begin{definition}[Energy Dissipation Equality definition of gradient flow]\label{def:ede}
Let $E$ be a $K$-convex and lower semicontinuous functional and let
$y_0\in D(E)$.  We say that a continuous curve $[0,\infty)\ni
t\mapsto y_t$ is a gradient flow for the $E$ in the EDE sense (or
simply a gradient flow) if it is locally absolutely continuous in
$(0,\infty)$, it takes values in the domain of $E$ and it holds
\begin{equation}
\label{eq:defede}
E(y_t)= E(y_s)+\frac12\int_t^s|\dot y_r|^2\,\d r+\frac12\int_t^s|D^- E|^2(y_r)\,\d r,\qquad\forall t\leq s.
\end{equation}
\end{definition}
Notice that due to \eqref{eq:boundtuttecurve} the equality \eqref{eq:defede} is equivalent to
\begin{equation}
\label{eq:defedegeq}
E(y_0)\geq E(y_s)+\frac12\int_0^s|\dot y_r|^2\,\d r+\frac12\int_0^s|D^- E|^2(y_r)\,\d r,\qquad\forall s>0.
\end{equation}
Indeed, if \eqref{eq:defedegeq} holds, then \eqref{eq:defede} holds
with $t=0$, and then by linearity \eqref{eq:defede} holds in
general.

It is not hard to check that if $E:\R^d\to\R$ is a $C^1$ 
function, 
then a curve $y_t:J\to\R^d$ is a gradient flow according
to the previous definition if and only if it satisfies
\[
y_t'=-\nabla E(y_t),\qquad \forall t\in J,
\]
so that the metric definition reduces to the classical one when specialized to Euclidean spaces.

The following theorem has been proved in \cite{Ambrosio-Gigli-Savare08} (Corollary 2.4.11):
\begin{theorem}[Existence of gradient flows in the EDE sense]\label{thm:edeex}
Let $(\X,\sfd_\X)$ be a compact  metric space and let
$E:\X\to\R\cup\{+\infty\}$ be a $K$-geodesically convex and lower semicontinuous
functional. Then every $y_0\in D(E)$ is the starting point of a
gradient flow in the EDE sense of $E$.
\end{theorem}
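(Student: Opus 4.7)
The plan is to implement the classical \emph{minimizing movements} (JKO/De Giorgi) scheme in the metric setting, and then identify the limit as a curve satisfying \eqref{eq:defedegeq}, which is equivalent to the EDE.

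\textbf{Step 1: discrete scheme.} Fix $\tau>0$ and inductively define
\[
y^\tau_{n+1}\in \argmin_{z\in\X}\Big\{E(z)+\frac1{2\tau}\sfd_\X^2(z,y^\tau_n)\Big\},\qquad y^\tau_0:=y_0.
\]
Existence of the minimizer follows from compactness of $\X$ and lower semicontinuity of $E$ (for $K<0$ one has to use compactness to bound $\sfd_\X$, so the functional is bounded below for every $\tau$). Comparing the minimizer with $y^\tau_n$ itself gives the basic discrete dissipation
\[
E(y^\tau_{n+1})+\frac1{2\tau}\sfd_\X^2(y^\tau_{n+1},y^\tau_n)\le E(y^\tau_n),
\]
which, summed in $n$, controls $\sum_n \tau^{-1}\sfd_\X^2(y^\tau_{n+1},y^\tau_n)\le 2(E(y_0)-\inf E)$.

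\textbf{Step 2: interpolants and a priori estimates.} Introduce two interpolations of the discrete values. The piecewise constant one $\bar y^\tau$ is used to define a candidate limit curve; the so-called \emph{De Giorgi variational interpolation} $\tilde y^\tau$, obtained by minimizing on the interval $(n\tau,(n+1)\tau]$, plays the key role for handling the slope. A standard computation (differentiating the value function of the variational problem with respect to $\tau$) yields the sharp inequality
\[
E(y^\tau_N)+\tfrac12\sum_{n=0}^{N-1}\tau\Big|\tfrac{\sfd_\X(y^\tau_{n+1},y^\tau_n)}{\tau}\Big|^2+\tfrac12\int_0^{N\tau}|D^-E|^2(\tilde y^\tau_r)\,\d r\le E(y_0),
\]
in which the discrete velocities control a uniform $L^2$-bound and hence equi-$1/2$-H\"older continuity of $\bar y^\tau$. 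Compactness of $\X$ plus a refined Arzel\`a--Ascoli argument then produces, along a vanishing subsequence $\tau_k\to 0$, a limit curve $y\in AC^2_{\rm loc}([0,\infty),\X)$ with $y_0$ as initial datum.

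\textbf{Step 3: passage to the limit and EDE.} Lower semicontinuity of the metric speed under pointwise convergence gives
$\int_0^s|\dot y_r|^2\,\d r\le\liminf_k \sum \tau_k\big|\sfd_\X(y^{\tau_k}_{n+1},y^{\tau_k}_n)/\tau_k\big|^2$.
The crucial ingredient is the lower semicontinuity of the descending slope $|D^-E|$ under pointwise convergence in the domain of $E$: this is precisely where $K$-convexity enters, via the sup representation \eqref{eq:slopegeodetiche}, which exhibits $|D^-E|$ as a supremum of continuous functions. One also verifies that $\tilde y^{\tau_k}$ has the same limit as $\bar y^{\tau_k}$. Passing to the $\liminf$ in the discrete inequality of Step 2 yields
\[
E(y_0)\ge E(y_s)+\tfrac12\int_0^s|\dot y_r|^2\,\d r+\tfrac12\int_0^s|D^-E|^2(y_r)\,\d r\qquad\forall s>0,
\]
which is \eqref{eq:defedegeq}. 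The opposite inequality is automatic from \eqref{eq:boundtuttecurve} (which, in turn, rests on the fact that $|D^-E|$ is an upper gradient on $K$-convex functionals), so \eqref{eq:defede} holds.

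\textbf{Main obstacle.} The delicate point is the sharp slope bound in Step 2 and, correspondingly, the lower semicontinuity of $|D^-E|$ needed to pass to the limit: without $K$-convexity both break down, and it is precisely the sup-formula \eqref{eq:slopegeodetiche} that makes them work. Everything else (compactness, extraction of the limit curve, matching of upper and lower bounds via \eqref{eq:boundtuttecurve}) is then relatively routine in the compact setting.
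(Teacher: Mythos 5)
Your plan is correct and is essentially the same argument the paper relies on: the paper itself gives no proof of this theorem, citing Corollary~2.4.11 of \cite{Ambrosio-Gigli-Savare08}, and your minimizing-movements scheme with the De Giorgi variational interpolation, the sharp discrete energy inequality, lower semicontinuity of $|D^-E|$ via the sup representation \eqref{eq:slopegeodetiche}, and the reduction to the one-sided inequality \eqref{eq:defedegeq} by means of \eqref{eq:boundtuttecurve} is precisely the proof given in that reference. You also correctly isolate the two places where $K$-convexity is genuinely needed (lower semicontinuity of the slope and its upper-gradient property), the rest being routine in the compact setting.
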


It is important to stress the fact that in general gradient flows in
the EDE sense are \emph{not} unique. A simple example is $\X:=\R^2$
endowed with the $L^\infty$ norm, and $E$ defined by $E(x,y):=x$. It
is immediate to see that $E$ is 0-convex and that for any point
$(x_0,y_0)$ there exist uncountably many gradient flows in the EDE
starting from it, for instance all curves $(x_0-t,y(t))$ with
$|y'(t)|\leq 1$ and $y(0)=y_0$.

\subsubsection{Evolution Variational Inequality}

To see where the EVI notion comes from, notice that for a $K$-convex and smooth function $f$ on $\R^d$
it holds $\x_t'=-\nabla f(\x)$ for any $t\geq 0$ if and only if
\begin{equation}
\label{eq:evird} \frac{\d}{\d t}\frac{|\x_t-z|^2}2+\frac
K2|\x_t-z|^2+f(\x_t)\leq f(z),\qquad\forall z\in\R^d,\ \forall t\geq
0.
\end{equation}
This equivalence is true because $K$-convexity ensures that
$v=-\nabla f(\x)$ if and only
\[
\la v,\x-z\ra+\frac K2|\x-z|^2+f(\x)\leq f(z),\qquad\forall z\in\R^d.
\]
Inequality \eqref{eq:evird} can be written in a metric context in several ways, which we collect
in the following statement (we omit the easy proof).

\begin{proposition}[Evolution Variational Inequality: equivalent statements]\label{prop:equivevi}
Let $(\X,\sfd_\X)$ be a complete and separable metric space, $E:\X\to (-\infty,\infty]$ a lower semicontinuous
functional. Then the following properties are equivalent.
\begin{itemize}
\item[(i)] For any $z\in E$ it holds
\[
\frac{\d}{\d t}\frac{\sfd_\X^2(y_t,z)}2+\frac
K2\sfd_\X^2(y_t,z)+E(y_t)\leq E(z),\qquad\text{for a.e.~$t\in
(0,\infty)$.}
\]
\item[(ii)] For any $z\in E$ it holds
\[
\frac{\sfd_\X^2(y_s,z)-\sfd_\X^2(y_t,z)}2+\frac
K2\int_t^s\sfd_\X^2(y_r,z)\,\d r+\int_t^s E(y_r)\,\d r\leq
(s-t)E(z),\qquad\forall 0<t<s<\infty.
\]
\item[(iii)] There exists a set $A\subset D(E)$ dense in energy (i.e., for any $z\in D(E)$ there
exists $(z_n)\subset A$ converging to $z$ such that $E(z_n)\to
E(z)$) such that for any $z\in A$ it holds
\[
\lims_{h\downarrow0}\frac{\sfd_\X^2(y_{t+h},z)-\sfd_\X^2(y_t,z)}2
+\frac K2\sfd_\X^2(y_t,z)+E(y_t)\leq E(z),\qquad \forall t\in
(0,\infty).
\]
\end{itemize}
\end{proposition}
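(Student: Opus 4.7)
The plan is to establish the circle (i) $\Rightarrow$ (ii) $\Rightarrow$ (iii) $\Rightarrow$ (ii) $\Rightarrow$ (i), with (ii) playing the role of a convenient integrated hub. Throughout, I treat the tacit assumption that $t\mapsto y_t$ is locally absolutely continuous on $(0,\infty)$ (as is implicit for gradient-flow curves); then the map $\phi_z(r):=\sfd_\X^2(y_r,z)/2$ is also locally absolutely continuous, since $|\sfd^2(y_r,z)-\sfd^2(y_{r'},z)|\le(\sfd_\X(y_r,z)+\sfd_\X(y_{r'},z))\sfd_\X(y_r,y_{r'})$ and the first factor is locally bounded.

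The implication (i) $\Rightarrow$ (ii) is just integration of the pointwise inequality from $t$ to $s$, using local absolute continuity of $\phi_z$ to recover $\phi_z(s)-\phi_z(t)$ from $\int_t^s\phi_z'$. For (ii) $\Rightarrow$ (iii) I would take $A:=D(E)$, which is trivially dense in energy. Dividing (ii) by $s-t>0$ and letting $s\downarrow t$, the first term becomes the $\limsup$ appearing in (iii); the second converges to $\tfrac{K}{2}\sfd_\X^2(y_t,z)$ by continuity of $y$; and lower semicontinuity of $E$ together with continuity of $y$ gives $\liminf_{s\downarrow t}\tfrac1{s-t}\int_t^s E(y_r)\,\d r\ge E(y_t)$ (a standard argument: for every $\eps>0$ one has $E(y_r)\ge E(y_t)-\eps$ for $r$ close to $t$).

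The core step is (iii) $\Rightarrow$ (ii). Fix $z\in A$: at every differentiability point of $\phi_z$ one has $\phi_z'(r)\le\limsup_{h\downarrow0}(\phi_z(r+h)-\phi_z(r))/h$, so (iii) yields $\phi_z'(r)+\tfrac{K}{2}\sfd_\X^2(y_r,z)+E(y_r)\le E(z)$ for a.e.\ $r$. Integrating from $t$ to $s$ gives (ii) for $z\in A$. For general $z\in D(E)$, pick $z_n\in A$ with $z_n\to z$ and $E(z_n)\to E(z)$; the distance terms pass to the limit by continuity of $\sfd_\X$ (dominated convergence on $[t,s]$, using that $\sup_{r\in[t,s],n}\sfd_\X(y_r,z_n)<\infty$), the $E(z_n)$ term by the density-in-energy hypothesis, and the $\int_t^s E(y_r)\,\d r$ term is unchanged. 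For $z\notin D(E)$, (ii) is vacuous. Finally, (ii) $\Rightarrow$ (i) is recovered by differentiating: at every $t$ that is both a differentiability point of $\phi_z$ and a Lebesgue point of $r\mapsto\sfd_\X^2(y_r,z)$ (a full-measure set), dividing (ii) by $s-t$ and letting $s\downarrow t$ produces the pointwise inequality in (i), again invoking the l.s.c.\ lower bound $\liminf\tfrac1{s-t}\int_t^s E(y_r)\,\d r\ge E(y_t)$.

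The one genuine subtlety is the passage from $z\in A$ to $z\in D(E)$ in (iii) $\Rightarrow$ (ii): this is precisely where ``dense in energy'' (as opposed to topological density) is essential, because the term $(s-t)E(z)$ on the right of (ii) must be recovered in the limit and this requires $E(z_n)\to E(z)$, not merely $z_n\to z$. Everything else is standard use of local absolute continuity, lower semicontinuity and Lebesgue differentiation.
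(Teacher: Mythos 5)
Your proof is correct; the paper itself omits the argument (``we omit the easy proof''), and what you write is precisely the standard proof the authors have in mind: local absolute continuity of $t\mapsto\sfd_\X^2(y_t,z)$ to pass back and forth between the pointwise, integrated and Dini-derivative forms, lower semicontinuity of $E$ along the continuous curve to get $\liminf_{s\downarrow t}\frac1{s-t}\int_t^s E(y_r)\,\d r\ge E(y_t)$, and the density-in-energy hypothesis exactly where you invoke it, to recover the term $(s-t)E(z)$ when extending (iii) from $z\in A$ to all $z\in D(E)$. You also correctly interpreted the difference quotient in (iii) (the printed formula is missing the division by $h$) and correctly treat the tacit standing assumptions (locally absolutely continuous curve with values in $D(E)$, and $z\in D(E)$ rather than $z\in E$), so there is nothing to object to.
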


\begin{definition}[Evolution Variational Inequality definition of gradient flow]
We say that a curve $(y_t)$ is a gradient flow of $E$ in the $EVI$
sense relative to $K\in\R$ (in short, ${\rm EVI}_K$-gradient flow),
if any of the above equivalent properties are true. We say that
$y_t$ starts from $y_0$ if $y_t\to y_0$ as $t\downarrow 0$.
\end{definition}

This definition of gradient flow is stronger than the one discussed
in the previous section, because of the following result proved by
the third author  in \cite{Savare10} (see also Proposition~3.6 of
\cite{Ambrosio-Gigli11}), which we state without proof.

\begin{proposition}[EVI implies EDE]\label{prop:giuseppe}
Let $(\X,\sfd_\X)$ be a complete and separable metric
 space, $K\in\R$, $E:\X\to (-\infty,,\infty]$ a lower
semicontinuous functional and $y_t:(0,\infty)\to D(E)$ a locally
absolutely continuous curve. Assume that $y_t$ is an ${\rm
EVI}_K$-gradient flow for $E$. Then \eqref{eq:defede} holds for any
$0<t<s$.
\end{proposition}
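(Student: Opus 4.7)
\textit{Plan.} The strategy is to sandwich the energy drop $E(y_0)-E(y_s)$ between two quantities that coincide by Young's inequality, thereby forcing the EDE identity. I would aim at three estimates:
\begin{itemize}
\item[(I)] $|D^-E|(y_t)\leq |\dot y_t|$ for a.e.\ $t\in(0,\infty)$;
\item[(II)] $E(y_0)-E(y_s)\geq \int_0^s|\dot y_r|^2\,\d r$ for every $s>0$;
\item[(III)] the upper-gradient bound $E(y_0)-E(y_s)\leq \int_0^s|\dot y_r|\,|D^-E|(y_r)\,\d r$.
\end{itemize}
These combine via Young's inequality together with (I) into the chain (with all integrals taken over $[0,s]$)
\[
\int|\dot y_r|^2\,\d r\ \leq\ E(y_0)-E(y_s)\ \leq\ \int|\dot y_r|\,|D^-E|(y_r)\,\d r\ \leq\ \tfrac12\int|\dot y_r|^2\,\d r+\tfrac12\int|D^-E|^2(y_r)\,\d r\ \leq\ \int|\dot y_r|^2\,\d r,
\]
forcing every inequality to be an equality; in particular $|\dot y_r|=|D^-E|(y_r)$ for a.e.\ $r$. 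Applying the same argument on an arbitrary subinterval $[t,s]\subset(0,\infty)$ yields \eqref{eq:defede}.

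For (I), I would start from form (i) of the EVI. The triangle inequality $|\sfd_\X(y_{t+h},z)-\sfd_\X(y_t,z)|\leq \sfd_\X(y_{t+h},y_t)$ makes $t\mapsto \sfd_\X^2(y_t,z)$ absolutely continuous with $|\tfrac{\d}{\d t}\sfd_\X^2(y_t,z)|\leq 2\sfd_\X(y_t,z)|\dot y_t|$ a.e.; substituting into form (i) and rearranging,
\[
\frac{E(y_t)-E(z)}{\sfd_\X(y_t,z)}+\frac{K}{2}\sfd_\X(y_t,z)\ \leq\ |\dot y_t|,\qquad z\neq y_t,
\]
and taking the positive part together with $\lims_{z\to y_t}$ yields (I). For (II), I would fix a partition $0=t_0<t_1<\cdots<t_N=s$ and apply form (ii) on each $[t_i,t_{i+1}]$ twice, once with $z=y_{t_i}$ and once with $z=y_{t_{i+1}}$; subtracting cancels the $\int E(y_r)\,\d r$ terms and gives
\[
(t_{i+1}-t_i)\bigl[E(y_{t_i})-E(y_{t_{i+1}})\bigr]\ \geq\ \sfd_\X^2(y_{t_i},y_{t_{i+1}})+\tfrac{K}{2}\int_{t_i}^{t_{i+1}}\!\bigl[\sfd_\X^2(y_r,y_{t_i})-\sfd_\X^2(y_r,y_{t_{i+1}})\bigr]\d r.
\]
Using $|\sfd_\X^2(y_r,y_{t_i})-\sfd_\X^2(y_r,y_{t_{i+1}})|\leq 2(t_{i+1}-t_i)\int_{t_i}^{t_{i+1}}|\dot y_\rho|^2\,\d\rho$ (triangle inequality plus Cauchy--Schwarz), the correction term, after dividing by $(t_{i+1}-t_i)$ and summing, is bounded by $|K|\,|\tau|_\infty\int_0^s|\dot y_r|^2\,\d r$, which vanishes as the mesh $|\tau|_\infty\to 0$. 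Meanwhile $\sum_i \sfd_\X^2(y_{t_i},y_{t_{i+1}})/(t_{i+1}-t_i)$ is monotone non-decreasing under refinement (again by Cauchy--Schwarz, since $\sfd_\X(y_a,y_b)\leq\sfd_\X(y_a,y_c)+\sfd_\X(y_c,y_b)$) with supremum $\int_0^s|\dot y_r|^2\,\d r$, yielding (II).

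The main obstacle is (III). When $E$ is $K$-geodesically convex, the descending slope is a strong upper gradient (cf.\ \eqref{eq:slopestrongupper}), and (III) is immediate; in the general setting of the proposition one must extract the upper-gradient inequality from the EVI itself. Heuristically this amounts to showing that $t\mapsto E(y_t)$ is locally absolutely continuous with $-\tfrac{\d}{\d t}E(y_t)\leq |\dot y_t|\,|D^-E|(y_t)$ a.e., the delicate point being to control the rate of decrease of $E$ by perturbing $y_t$ along directions realising the slope. For this I would invoke the argument of \cite{Savare10} referenced in the statement.
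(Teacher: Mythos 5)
Your overall architecture (sandwich the energy drop between $\int|\dot y|^2$ and $\int|\dot y|\,|D^-E|$ and close with Young's inequality) is the right mechanism, and step (I) is essentially sound once you fix a technicality: deriving $|D^-E|(y_t)\le|\dot y_t|$ from form (i) of Proposition~\ref{prop:equivevi} leaves the exceptional $t$-set depending on $z$; you should instead use the integrated form (ii) on $[t,t+h]$, divide by $h$ and use lower semicontinuity of $E$ along the curve, which gives, for a.e.\ $t$ and \emph{all} $z$, the pointwise inequality you need. The genuine gaps are in (II) and (III). In (II), the claimed per-interval estimate is obtained by ``subtracting'' two applications of form (ii) (with $z=y_{t_i}$ and $z=y_{t_{i+1}}$); these are inequalities of the same direction, so the subtraction is not a legitimate operation. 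To obtain your inequality with the full weight $\sfd^2(y_{t_i},y_{t_{i+1}})$ you would need the \emph{reverse} of the second application, i.e.\ a lower bound of the form $\int_{t_i}^{t_{i+1}}E(y_r)\,\d r-(t_{i+1}-t_i)E(y_{t_{i+1}})\geq\tfrac12\sfd^2(y_{t_i},y_{t_{i+1}})+O(K)$, which is (a localized version of) precisely the dissipation estimate you are trying to prove. What form (ii) with $z=y_{t_i}$ legitimately yields (even granting monotonicity of $E\circ y$, which itself requires an argument) is the estimate with $\tfrac12\sfd^2(y_{t_i},y_{t_{i+1}})$, and after refinement this only gives $E(y_t)-E(y_s)\geq\tfrac12\int_t^s|\dot y_r|^2\,\d r$ — not enough to force equality in your Young chain.

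Step (III) is the real crux and is left unproved: without geodesic convexity of $E$ the upper-gradient property \eqref{eq:slopestrongupper} of $|D^-E|$ is not available, and controlling $E(y_t)-E(y_s)$ from above by $\int_t^s|\dot y_r|\,|D^-E|(y_r)\,\d r$ along a single EVI curve is exactly the technical content of the result (it requires, e.g., the regularization estimates of Savar\'e/Daneri--Savar\'e showing that $t\mapsto E(y_t)$ is locally Lipschitz with $-\frac{\d}{\d t}E(y_t)=|\dot y_t|\,|D^-E|(y_t)$ a.e.). Deferring it to \cite{Savare10} is circular in spirit, since that argument also delivers the sharp lower bound you need in (II); note that the present paper itself states the proposition without proof, referring to \cite{Savare10} and to Proposition~3.6 of \cite{Ambrosio-Gigli11}, so to make your proposal a proof you must supply these two halves rather than cite them. (One cannot shortcut via Proposition~\ref{prop:dansav} either: that requires EVI flows starting from \emph{every} point of $D(E)$, whereas here only one EVI curve is given.)
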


\begin{remark}[Contractivity]\label{re:contr}{\rm
It can be proved that if $(y_t)$ and $(z_t)$ are gradient flows in
the ${\rm EVI}_K$ sense of the l.s.c. functional $E$, then
\[
\sfd_\X(y_t,z_t)\leq e^{-Kt}\sfd_\X(y_0,z_0),\qquad\forall t\geq 0.
\]
In particular, gradient flows in the EVI sense are unique. This
contractivity property, used in conjunction with $(ii)$ of
Proposition~\ref{prop:equivevi}, guarantees that if existence of
gradient flows in the EVI sense is known for initial data lying in
some subset $S\subset \X$, then it is also known for initial data in
the closure $\overline S$ of $S$.}\fr\end{remark}

We also point out the following geometric consequence of the EVI,
proven in \cite{Daneri-Savare08}.

\begin{proposition}\label{prop:dansav}
Let $E:\X\to (-\infty,\infty]$ be a lower semicontinuous functional on a
complete   space $(\X,d_\X)$. Assume that every 
$y_0\in
D(E) $
is the starting point of an ${\rm EVI}_K$-gradient flow of $E$. Then
$E$ is $K$-convex along \emph{all} geodesics contained in $\overline
{D(E)}$.
\end{proposition}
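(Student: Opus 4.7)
The plan is to apply EVI along a single gradient flow starting at an arbitrary intermediate point $\gamma_\lambda$ of the geodesic, using the two endpoints $\gamma_0,\gamma_1$ as test points, and to combine the two resulting inequalities in a weighted sum that exploits the geodesic character of $\gamma$.

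First I would reduce to the case $\gamma_0,\gamma_1\in D(E)$: otherwise the right-hand side of the $K$-convexity bound is $+\infty$ and there is nothing to prove. By the contractivity in Remark~\ref{re:contr}, existence of ${\rm EVI}_K$-flows from every point of $D(E)$ extends to every point of $\overline{D(E)}$, so an ${\rm EVI}_K$-flow $(y_r)_{r\geq 0}$ starting from $\gamma_\lambda\in\overline{D(E)}$ is at our disposal. Writing the integrated form (ii) of Proposition~\ref{prop:equivevi} for this flow on the interval $[s_0,s]$ with test point $z\in D(E)$, dividing by $s-s_0$, and passing to the limit $s_0\downarrow 0$ via the continuity of $y$ at $0$ and the right-continuity of $r\mapsto E(y_r)$ at $r=0$ (which is a standard byproduct of EVI together with lower semicontinuity), I would obtain the basic building block
\[
\limsup_{h\downarrow 0}\frac{\sfd_\X^2(y_h,z)-\sfd_\X^2(\gamma_\lambda,z)}{2h}+\frac{K}{2}\sfd_\X^2(\gamma_\lambda,z)+E(\gamma_\lambda)\leq E(z).
\]

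Next I would apply this with $z=\gamma_0$ and $z=\gamma_1$, multiply the two inequalities by $(1-\lambda)$ and $\lambda$ respectively, and sum. Setting $L:=\sfd_\X(\gamma_0,\gamma_1)$ and using the constant-speed identities $\sfd_\X(\gamma_\lambda,\gamma_0)=\lambda L$ and $\sfd_\X(\gamma_\lambda,\gamma_1)=(1-\lambda)L$, the curvature terms collapse into $\tfrac{K}{2}\lambda(1-\lambda)L^2$. To discard the remaining limsup I would use the following elementary geometric fact: the function
\[
G(z):=(1-\lambda)\sfd_\X^2(z,\gamma_0)+\lambda\sfd_\X^2(z,\gamma_1)
\]
satisfies $G(z)\geq\lambda(1-\lambda)L^2$ for every $z\in\X$, with equality at $z=\gamma_\lambda$. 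Indeed, writing $a=\sfd_\X(z,\gamma_0)$, $b=\sfd_\X(z,\gamma_1)$, the triangle inequality forces $a+b\geq L$, and the scalar problem $\min\{(1-\lambda)a^2+\lambda b^2:\ a+b\geq L,\ a,b\geq 0\}$ has its unique minimizer at $(a,b)=(\lambda L,(1-\lambda)L)$ with value $\lambda(1-\lambda)L^2$. Equality at $\gamma_\lambda$ holds because $\gamma_\lambda$ lies on a geodesic between $\gamma_0$ and $\gamma_1$, so $a+b=L$. Consequently $G(y_h)-G(\gamma_\lambda)\geq 0$ for every $h\geq 0$, hence
\[
\limsup_{h\downarrow 0}\frac{G(y_h)-G(\gamma_\lambda)}{2h}\geq 0.
\]
Substituting this into the convex combination of the two EVI inequalities yields exactly
\[
E(\gamma_\lambda)+\frac{K}{2}\lambda(1-\lambda)L^2\leq (1-\lambda)E(\gamma_0)+\lambda E(\gamma_1),
\]
which is the $K$-geodesic convexity along $\gamma$.

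The main obstacle I foresee is not the algebra of the above combination (which is clean once set up), but rather the technical reduction step: justifying that EVI-type inequalities can be pushed all the way down to $r=0$ from an initial datum $\gamma_\lambda$ that lies in $\overline{D(E)}$ but need not be in $D(E)$ a priori. This is handled by contractivity of EVI flows, lower semicontinuity of $E$, and standard regularity of EVI trajectories, but it requires care in passing to the limit in the integral inequalities. Once this is done, the geometric minimization of $G$ is what makes the convexity inequality fall out with the correct sign and the correct factor $\lambda(1-\lambda)L^2$.
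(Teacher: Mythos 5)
The paper itself gives no proof of this proposition (it is quoted from \cite{Daneri-Savare08}), but your strategy is precisely the standard argument behind it: run the ${\rm EVI}_K$-flow from the intermediate point $\gamma_\lambda$ (existence on $\overline{D(E)}$ being granted by the contractivity argument of Remark~\ref{re:contr}), test the EVI against the two endpoints, and use that $G(z)=(1-\lambda)\sfd_\X^2(z,\gamma_0)+\lambda\sfd_\X^2(z,\gamma_1)$ attains its global minimum $\lambda(1-\lambda)L^2$ at $z=\gamma_\lambda$. The algebra, the reduction to $\gamma_0,\gamma_1\in D(E)$, and the elementary minimization of $G$ are all correct.

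There is, however, one step that fails as written: your ``building block'' at $t=0$ carries $E(\gamma_\lambda)$ on its left-hand side and therefore presupposes $\gamma_\lambda\in D(E)$, while finiteness of $E(\gamma_\lambda)$ is part of what has to be proved (it is forced by the convexity inequality with finite endpoints) and cannot be extracted from a single test point. If $E(\gamma_\lambda)=+\infty$, lower semicontinuity gives $E(y_r)\to+\infty$ as $r\downarrow0$, the difference quotients of $\sfd_\X^2(y_h,z)$ then have $\limsup=-\infty$, your building block degenerates into an ill-defined ``$-\infty+\infty\le E(z)$'', and summing two such inequalities via $\limsup A+\limsup B\ge\limsup(A+B)$ is no longer legitimate; the final line would assert $+\infty\le$ a finite number. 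Note also that the ``right-continuity of $E(y_r)$ at $r=0$'' you invoke is exactly what is unavailable in this case. The cure is to reverse the order of operations: take the convex combination of the two \emph{integrated} inequalities (form (ii) of Proposition~\ref{prop:equivevi}, pushed down to $t=0$ using only the continuity of $r\mapsto y_r$) at a fixed $s>0$, which gives
\[
\frac{G(y_s)-G(\gamma_\lambda)}{2s}+\frac{K}{2s}\int_0^s\Bigl[(1-\lambda)\sfd_\X^2(y_r,\gamma_0)+\lambda\sfd_\X^2(y_r,\gamma_1)\Bigr]\d r+\frac1s\int_0^s E(y_r)\,\d r\le(1-\lambda)E(\gamma_0)+\lambda E(\gamma_1),
\]
then drop the first term (nonnegative by your minimization of $G$) and let $s\downarrow0$: the middle term converges to $\frac K2\lambda(1-\lambda)L^2$, while $\frac1s\int_0^s E(y_r)\,\d r\ge E(y_s)$ because $E$ is nonincreasing along the flow (EVI implies EDE, Proposition~\ref{prop:giuseppe}) and $\liminf_{s\downarrow0}E(y_s)\ge E(\gamma_\lambda)$ by lower semicontinuity. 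This yields the $K$-convexity inequality and, at the same time, proves $\gamma_\lambda\in D(E)$, closing the gap you yourself flagged but did not resolve.
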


As we already said, gradient flows in the EVI sense do not
necessarily exist, and their existence depends on the properties of
the distance $\sfd_\X$. For instance, it is not hard to see that if
we endow $\R^2$ with the $L^\infty$ norm and consider the functional
$E(x,y):=x$, then there re is no gradient flow in the ${\rm
EVI}_K$-sense, regardless of the constant $K$.

\section{Hopf-Lax formula and Hamilton-Jacobi equation}\label{sec:hopflax}

Aim of this subsection is to study the properties of the Hopf-Lax formula in a metric setting and
its relations with the Hamilton-Jacobi equation. Here we assume that $(X,\sfd)$ is a compact metric space.
Notice that there is no reference measure $\mm$ in the discussion.

Let $f:X\to\R$ be a Lipschitz function. For $t>0$ define
\[
F(t,x,y):=f(y)+\frac{\sfd^2(x,y)}{2t},
\]
and the function $Q_tf:X\to\R$ by
\[
Q_tf(x):=\inf_{y\in X}F(t,x,y)=\min_{y\in X}F(t,x,y).
\]
Also, we introduce the functions $D^+,\,D^-:X\times(0,\infty)\to\R$
as
\begin{equation}\label{eq:defdpm}
\begin{split}
D^+(x,t)&:=\max\, \sfd(x,y),\\
D^-(x,t)&:=\min\, \sfd(x,y),\\
\end{split}
\end{equation}
where, in both cases, the $y$'s vary among all minima of
$F(t,x,\cdot)$. We also set $Q_0f=f$ and $D^\pm(x,0)=0$. Thanks to the continuity of $F$ and the compactness of $X$, it is easy to check
that the map $[0,\infty)\times X\ni(t,x)\mapsto Q_tf(x)$ is
continuous. Furthermore, the fact that $f$ is Lipschitz easily
yields
\begin{equation}
\label{eq:boundD}
D^-(x,t)\leq D^+(x,t)\leq 2t\Lip(f),
\end{equation}
and from the fact that the functions $\{\sfd^2(\cdot,y)\}_{y\in Y}$
are uniformly Lipschitz (because $(X,\sfd)$ is bounded) we get that
$Q_tf$ is Lipschitz for any $t> 0$.

\begin{proposition}[Monotonicity of $D^\pm$]\label{prop:dmon}
For all $x\in X$ it holds
\begin{equation}\label{eq:basic_mono}
D^+(x,t)\leq D^-(x,s),\qquad 0\leq t< s.
\end{equation}
As a consequence, $D^+(x,\cdot)$ and $D^-(x,\cdot)$ are both
nondecreasing, and they coincide with at most countably many
exceptions in $[0,\infty)$.
\end{proposition}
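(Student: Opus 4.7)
The heart of the argument is the monotonicity inequality \eqref{eq:basic_mono}; once this is in hand, the final statement about coincidence follows from a standard monotone-functions argument.

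\medskip

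For fixed $x$ and $0<t<s$, pick $y_t$ realizing the maximum in the definition of $D^+(x,t)$, and $y_s$ realizing the minimum in the definition of $D^-(x,s)$; in particular both are minimizers of $F(t,x,\cdot)$ and $F(s,x,\cdot)$ respectively. The plan is to exploit these two minimality inequalities simultaneously: from
\[
f(y_t)+\frac{\sfd^2(x,y_t)}{2t}\le f(y_s)+\frac{\sfd^2(x,y_s)}{2t},\qquad
f(y_s)+\frac{\sfd^2(x,y_s)}{2s}\le f(y_t)+\frac{\sfd^2(x,y_t)}{2s}
\]
and summing, the terms involving $f$ cancel, yielding
\[
\Bigl(\frac{1}{2t}-\frac{1}{2s}\Bigr)\sfd^2(x,y_t)\le\Bigl(\frac{1}{2t}-\frac{1}{2s}\Bigr)\sfd^2(x,y_s).
\]
Since $t<s$ the coefficient is strictly positive, so $\sfd(x,y_t)\le \sfd(x,y_s)$, i.e.\ $D^+(x,t)\le D^-(x,s)$. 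The case $t=0$ is trivial from the convention $D^\pm(x,0)=0$ together with nonnegativity of $D^-(x,s)$.

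\medskip

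For the consequences, observe first that by definition $D^-(x,\cdot)\le D^+(x,\cdot)$ pointwise. Combined with \eqref{eq:basic_mono}, for $0\le t<s$ one reads off the sandwich
\[
D^-(x,t)\le D^+(x,t)\le D^-(x,s)\le D^+(x,s),
\]
which simultaneously proves that both $D^+(x,\cdot)$ and $D^-(x,\cdot)$ are nondecreasing on $[0,\infty)$. Being monotone, each admits at most countably many discontinuity points; let $N\subset[0,\infty)$ be the (countable) union of these two sets. For any $t\notin N$, letting $s\downarrow t$ in the inequality $D^+(x,t)\le D^-(x,s)$ and using right-continuity of $D^-(x,\cdot)$ at $t$ gives $D^+(x,t)\le D^-(x,t)$; the reverse inequality being automatic, we conclude $D^+(x,t)=D^-(x,t)$ off the countable set $N$.

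\medskip

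I do not foresee a genuine obstacle here: everything rests on the simultaneous use of the two minimality inequalities, after which the calculation is purely algebraic. The only mildly delicate point is making sure one selects the \emph{right} minimizers (the farthest one at time $t$ and the nearest one at time $s$), so that the resulting inequality between distances is exactly \eqref{eq:basic_mono} rather than a weaker comparison between arbitrary minimizers.
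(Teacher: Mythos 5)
Your argument is correct and follows essentially the same route as the paper: the same pairing of the two minimality inequalities for the farthest minimizer at time $t$ and the nearest one at time $s$, and the same right-continuity argument (at the countably many non-exceptional points of the monotone function $D^-(x,\cdot)$) to conclude coincidence off a countable set.
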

\begin{proof}
Fix $x\in X$. For $t=0$ there is nothing to prove. Now pick
$0<t<s$ and choose $x_t$ and $x_s$ minimizers of $F(t,x,\cdot)$ and $F(s,x,\cdot)$
respectively, such that $\sfd(x,x_t)=D^+(x,t)$ and $
\sfd(x,x_s)=D^-(x,s)$. The minimality of $x_t,x_s$ gives
\[
\begin{split}
f(x_t)+\frac{\sfd^2(x_t,x)}{2t}&\leq f(x_s)+\frac{\sfd^2(x_s,x)}{2t}\\
f(x_s)+\frac{\sfd^2(x_s,x)}{2s}&\leq f(x_t)+\frac{\sfd^2(x_t,x)}{2s}.
\end{split}
\]
Adding up and using the fact that $\tfrac1t\geq\tfrac 1s$ we deduce
\[
D^+(x,t)=\sfd(x_t,x)\leq \sfd(x_s,x)= D^-(x,s),
\]
which is \eqref{eq:basic_mono}.

Combining this with the inequality $D^-\leq D^+$ we
immediately obtain that both functions are nonincreasing. At a point
of right continuity of $D^-(x,\cdot)$ we get
$$
D^+(x,t)\leq\inf_{s>t}D^-(x,s)=D^-(x,t).
$$
This implies that the two functions coincide out of a countable set.
\end{proof}

Next, we examine the semicontinuity properties of $D^\pm$. These
properties imply that points $(x,t)$ where the equality
$D^+(x,t)=D^-(x,t)$ occurs are continuity points for both $D^+$ and
$D^-$.
\begin{proposition}[Semicontinuity of $D^\pm$]
The map $D^+$ is upper semicontinuous and the map $D^-$ is lower
semicontinuous in $X\times (0,\infty)$.
\end{proposition}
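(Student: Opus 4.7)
The plan is to establish both semicontinuity statements via a standard compactness argument: minimizers of $F(t_n,x_n,\cdot)$ along a converging sequence $(x_n,t_n)\to(x,t)$ admit cluster points which, by joint continuity of $F$ on $X\times X\times(0,\infty)$, must themselves minimize $F(t,x,\cdot)$. Then one merely has to track whether we chose the ``maximizing'' or ``minimizing'' witness for $\sfd(x_n,y_n)$.

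For the upper semicontinuity of $D^+$, I would fix $(x,t)\in X\times(0,\infty)$ and a sequence $(x_n,t_n)\to(x,t)$, and choose $y_n\in X$ minimizing $F(t_n,x_n,\cdot)$ and realizing $\sfd(x_n,y_n)=D^+(x_n,t_n)$ (the maximum exists because the set of minimizers is compact and $\sfd(x_n,\cdot)$ is continuous). After passing to a subsequence achieving $\limsup_n D^+(x_n,t_n)$, I extract by compactness of $X$ a further subsequence along which $y_n\to y$. Passing to the limit in the inequality $F(t_n,x_n,y_n)\le F(t_n,x_n,z)$, valid for every $z\in X$, yields $F(t,x,y)\le F(t,x,z)$, so $y$ minimizes $F(t,x,\cdot)$. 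Hence $\sfd(x,y)\le D^+(x,t)$, while $\sfd(x_n,y_n)\to\sfd(x,y)$, and the chain $\limsup_n D^+(x_n,t_n)=\sfd(x,y)\le D^+(x,t)$ gives the claim.

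The lower semicontinuity of $D^-$ is entirely parallel: choosing $y_n$ minimizing $F(t_n,x_n,\cdot)$ with $\sfd(x_n,y_n)=D^-(x_n,t_n)$, passing to a subsequence achieving $\liminf_n D^-(x_n,t_n)$ and then extracting $y_n\to y$, the same limit argument shows $y$ minimizes $F(t,x,\cdot)$, whence $\sfd(x,y)\ge D^-(x,t)$, giving $\liminf_n D^-(x_n,t_n)=\sfd(x,y)\ge D^-(x,t)$.

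No serious obstacle is anticipated; the only point that needs a brief justification is the joint continuity of $F$ on $X\times X\times(0,\infty)$, which is immediate from the continuity of $f$ and $\sfd$ together with the fact that $t$ stays bounded away from $0$ (so that $1/(2t)$ causes no trouble). The corollary that equality points $D^+(x,t)=D^-(x,t)$ are joint continuity points of both functions, mentioned in the sentence preceding the statement, follows at once by combining the two one-sided bounds with the pointwise sandwich $D^-\le D^+$.
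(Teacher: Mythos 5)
Your proof is correct and follows essentially the same route as the paper's: extract, by compactness, a convergent sequence of distance-extremal minimizers $y_n\to y$, show that $y$ minimizes $F(t,x,\cdot)$, and compare $\sfd(x,y)$ with $D^\pm(x,t)$. The only cosmetic difference is that you pass to the limit directly in the inequality $F(t_n,x_n,y_n)\le F(t_n,x_n,z)$ using the joint continuity of $F$, whereas the paper deduces that $(y_n)$ is a minimizing sequence for $F(t,x,\cdot)$ from the continuity of $(t,x)\mapsto Q_tf(x)$ -- two equivalent ways of making the same point.
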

\begin{proof}
We prove lower semicontinuity of $D^-$, the proof of upper
semicontinuity of $D^+$ being similar. Let $(x_i,t_i)$ be any
sequence converging to $(x,t)$ and, for every $i$, let $(y_i)$ be
a minimum of $F(t_i,x_i,\cdot)$ for which
$\sfd(y_i,x_i)=D^-(x_i,t_i)$. For all $i$ we have
\[
f(y_i)+\frac{\sfd^2(y_i,x_i)}{2t_i}=Q_{t_i}f(x_i),
\]
Moreover, the continuity of $(x,t)\mapsto Q_tf(x)$ gives that $\lim_iQ_{t_i}f(x_i)=Q_tf(x)$, thus
\[
\lim_{i\to\infty} f(y_i)+\frac{\sfd^2(y_i,x)}{2t}=Q_tf(x).
\]
This means that $(y_i)$ is a minimizing sequence for $F(t,x,\cdot)$.
Since $(X,\sfd)$ is compact, possibly passing to a subsequence, not
relabeled, we may assume that $(y_i)$ converges to $y$ as
$i\to\infty$. Therefore
\[
D^-(x,t)\leq \sfd(x,y)=\lim_{i\to\infty}\sfd(x,y_i)=\lim_{i\to\infty}D^-(x_i,t_i).
\]
\end{proof}

\begin{proposition}[Time derivative of
$Q_tf$]\label{prop:timederivative} The map $t\mapsto Q_tf$ is
Lipschitz from $[0,\infty)$ to $C(X)$  and, for all
$x\in X$, it satisfies
\begin{equation}\label{eq:Dini1}
\frac{\d}{\d t}Q_tf(x)=-\frac{[D^{\pm}(x,t)]^2}{2t^2},
\end{equation}
for any $t>0$ with at most countably many exceptions.
\end{proposition}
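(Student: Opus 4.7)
The plan is to derive the two-sided estimate
\begin{equation*}
\frac{[D^+(x,t)]^2(s-t)}{2ts} \;\le\; Q_tf(x)-Q_sf(x) \;\le\; \frac{[D^-(x,s)]^2(s-t)}{2ts} \qquad (0<t<s),
\end{equation*}
from which both the Lipschitz regularity and the derivative formula drop out by passage to the limit. To obtain the left inequality I would pick a minimizer $y_t$ of $F(t,x,\cdot)$ with $\sfd(x,y_t)=D^+(x,t)$ and test $Q_sf(x)\le F(s,x,y_t)$; a rearrangement using $\tfrac{1}{2s}-\tfrac{1}{2t}=-\tfrac{s-t}{2ts}$ yields the bound. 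Symmetrically, testing $Q_tf(x)\le F(t,x,y_s)$ against a minimizer $y_s$ of $F(s,x,\cdot)$ with $\sfd(x,y_s)=D^-(x,s)$ produces the right inequality. This is a sharpening of the cross-comparison already used to prove Proposition~\ref{prop:dmon}, made quantitative by choosing the extremal minimizers rather than arbitrary ones.

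The derivative formula \eqref{eq:Dini1} then follows at every $t>0$ such that $D^+(x,t)=D^-(x,t)$, which by Proposition~\ref{prop:dmon} excludes at most countably many $t$. Dividing the estimate by $s-t>0$ and letting $s\downarrow t$, the monotonicity of $D^\pm(x,\cdot)$ combined with the upper semicontinuity of $D^+$ forces $D^+(x,s)\to D^+(x,t)$ and, via the sandwich $D^-(x,s)\le D^+(x,s)\to D^+(x,t)=D^-(x,t)$, also $D^-(x,s)\to D^-(x,t)$; hence both ends of the estimate converge to $-[D^\pm(x,t)]^2/(2t^2)$. The case $s\uparrow t$ is analogous, using the lower semicontinuity of $D^-$. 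For the Lipschitz continuity of $t\mapsto Q_tf$ from $[0,\infty)$ to $C(X)$, I would use $D^-(x,s)\le 2s\Lip(f)$ from \eqref{eq:boundD} inside the right-hand inequality to get
\begin{equation*}
0\le Q_tf(x)-Q_sf(x)\le 4\Lip(f)^2 (s-t) \qquad \text{whenever } 0<t\le s\le 2t,
\end{equation*}
and handle the regime $s>2t$ by telescoping across a dyadic partition $t=t_0<t_1<\cdots<t_K=s$ with $t_{k+1}\le 2t_k$, which preserves the constant.

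The remaining subtle point is the endpoint $t=0$, where the two-sided estimate becomes singular. I would patch this by a direct elementary computation independent of the inequality above: minimizing $r\mapsto -\Lip(f)\,r+r^2/(2t)$ over $r\ge 0$ gives $\|Q_tf-f\|_\infty\le t\Lip(f)^2/2$, which supplies exactly the Lipschitz control at $t=0$ needed to match the bound on $(0,\infty)$. This endpoint analysis is the only delicate step; once the two-sided estimate and the auxiliary bound near zero are in place, both claims of the proposition reduce to straightforward monotonicity and semicontinuity arguments.
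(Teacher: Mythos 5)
Your argument is correct and follows essentially the same route as the paper: the same cross-comparison of the minimizers of $F(t,x,\cdot)$ and $F(s,x,\cdot)$ giving the two-sided estimate on $Q_tf(x)-Q_sf(x)$, the same squeeze via Proposition~\ref{prop:dmon} (plus the semicontinuity of $D^\pm$) to get \eqref{eq:Dini1} outside a countable set, and the same use of \eqref{eq:boundD} for the Lipschitz bound. The only cosmetic difference is near $t=0$: the paper integrates the a.e.\ bound $|\tfrac{\d}{\d t}Q_tf(x)|\le 2\Lip^2(f)$ and invokes the continuity of $Q_tf$ at $t=0$, while you telescope the increment estimate dyadically and bound $\|Q_tf-f\|_\infty\le t\Lip^2(f)/2$ directly; both are immediate and equivalent.
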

\begin{proof}
Let $t<s$ and $x_t$, $x_s$ be minima of $F(t,x,\cdot)$
and $F(s,x,\cdot)$. We have
\[
\begin{split}
Q_sf(x)-Q_tf(x)&\leq F(s,x,x_t)-F(t,x,x_t)=\frac{\sfd^2(x,x_t)}{2}\frac{t-s}{ts},\\
Q_sf(x)-Q_tf(x)&\geq F(s,x,x_s)-F(t,x,x_s)=\frac{\sfd^2(x,x_s)}{2}\frac{t-s}{ts},
\end{split}
\]
which gives that $t\mapsto Q_tf(x)$ is Lipschitz in $(\eps,+\infty)$
for any $\eps>0$ and $x\in X$. Also, dividing by $(s-t)$ and taking
Proposition~\ref{prop:dmon} into account, we get \eqref{eq:Dini1}.
Now notice that from  \eqref{eq:boundD} we get that $|\frac{\d}{\d
t}Q_tf(x)|\leq 2\Lip^2(f)$ for any $x$ and a.e.~$t$, which, together
with the pointwise convergence of $Q_tf$ to $f$ as $t\downarrow 0$,
yields that $t\mapsto Q_tf\in C(X)$ is Lipschitz in $[0,\infty)$.
\end{proof}

\begin{proposition}[Bound on the local Lipschitz constant of $Q_tf$]\label{prop:slopesqt}
For $(x,t)\in X\times (0,\infty)$ it holds:
\begin{equation}
\label{eq:hjbss}
|D Q_tf|(x)\leq \frac{D^+(x,t)}t.
\end{equation}
\end{proposition}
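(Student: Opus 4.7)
The plan is to bound the two-sided differences $|Q_tf(x')-Q_tf(x)|$ using minimizers as test points, expand the squared distance via the algebraic identity $a^2-b^2=(a-b)(a+b)$, and control the sum $a+b$ by the distances to the minimizers (so by $D^+$), and the difference $a-b$ by $\sfd(x,x')$ via the triangle inequality. The upper semicontinuity of $D^+(\cdot,t)$ established in the preceding proposition will then allow us to pass to the limit as $x'\to x$.

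More precisely, fix $t>0$, $x\in X$. For the easy direction, pick a minimizer $y$ of $F(t,x,\cdot)$ with $\sfd(x,y)=D^+(x,t)$. Using $y$ as a competitor for $Q_tf(x')$ gives
\[
Q_tf(x')-Q_tf(x)\leq\frac{\sfd^2(x',y)-\sfd^2(x,y)}{2t}\leq\frac{\sfd(x,x')\bigl(2D^+(x,t)+\sfd(x,x')\bigr)}{2t}.
\]
For the reverse direction, pick $y'$ a minimizer of $F(t,x',\cdot)$ with $\sfd(x',y')=D^+(x',t)$ and use it symmetrically as a competitor for $Q_tf(x)$:
\[
Q_tf(x)-Q_tf(x')\leq\frac{\sfd^2(x,y')-\sfd^2(x',y')}{2t}\leq\frac{\sfd(x,x')\bigl(2D^+(x',t)+\sfd(x,x')\bigr)}{2t}.
\]

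Combining the two estimates, dividing by $\sfd(x,x')$, and taking $\limsup_{x'\to x}$ yields
\[
|DQ_tf|(x)\leq\frac{1}{t}\limsup_{x'\to x}\max\{D^+(x,t),D^+(x',t)\}\leq\frac{D^+(x,t)}{t},
\]
where the last inequality uses the upper semicontinuity of $D^+(\cdot,t)$ (the term $\sfd(x,x')/(2t)$ vanishes in the limit). The only nontrivial point is this reverse estimate, because the minimizer $y'$ depends on $x'$; this is precisely where upper semicontinuity of $D^+$ is essential to ensure that $D^+(x',t)$ does not blow up as $x'\to x$. The forward estimate and the algebraic manipulation are routine applications of the triangle inequality.
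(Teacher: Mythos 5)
Your proposal is correct and is essentially the paper's own argument: both one-sided differences are bounded by testing with the minimizer at $x$ (respectively at the nearby point $x'$), expanding the squared distances via the triangle inequality, and the term $D^+(x',t)$ arising from the second estimate is handled exactly as in the paper by the upper semicontinuity of $D^+(\cdot,t)$.
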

\begin{proof}
Fix $x\in X$ and $t\in (0,\infty)$, pick a sequence $(x_i)$
converging to $x$ and a corresponding sequence $(y_i)$ of minimizers
for $F(t,x_i,\cdot)$ and similarly a minimizer $y$ of $F(t,x,\cdot)$. We start proving  that
\[
\lims_{i\to\infty}\frac{Q_tf(x)-Q_tf(x_i)}{d(x,x_i)}\leq
\frac{D^+(x,t)}{t}.
\]
Since it holds
\[
\begin{split}
Q_tf(x)-Q_tf(x_i)&\leq F(t,x,y_i)-F(t,x_i,y_i)\leq  f(y_i)+\frac{\sfd^2(x,y_i)}{2t}-f(y_i)-\frac{\sfd^2(x_i,y_i)}{2t}\\
&\leq \frac{\sfd(x,x_i)}{2t}\big(\sfd(x,y_i)+\sfd(x_i,y_i)\big)\leq \frac{\sfd(x,x_i)}{2t}\big(\sfd(x,x_i)+2D^+(x_i,t)\big),
\end{split}
\]
dividing by $d(x,x_i)$, letting $i\to\infty$ and using the upper semicontinuity of $D^+$ we get the claim. To conclude, we need to show that
\[
\lims_{i\to\infty}\frac{Q_tf(x_i)-Q_tf(x)}{d(x,x_i)}\leq
\frac{D^+(x,t)}{t}.
\]
This follows along similar lines starting from the inequality
$$Q_tf(x_i)-Q_tf(x)\leq F(t,x_i,y)-F(t,x,y_i).$$
\end{proof}

\begin{theorem}[Subsolution of HJ]\label{thm:subsol}
For every $x\in X$ it holds
\begin{equation}\label{eq:hjbsus}
\frac{\d}{\d t}Q_tf(x)+\frac12|D Q_tf|^2(x)\leq 0
\end{equation}
with at most countably many exceptions in $(0,\infty)$.
\end{theorem}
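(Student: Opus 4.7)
The plan is to simply combine the two immediately preceding propositions, exploiting the fact that both statements ignore only countably many exceptional times. Fix $x \in X$. By Proposition~\ref{prop:dmon}, the nondecreasing functions $D^+(x,\cdot)$ and $D^-(x,\cdot)$ agree outside a countable set $N_x \subset (0,\infty)$. By Proposition~\ref{prop:timederivative}, there is another countable set $N'_x \subset (0,\infty)$ (where $t \mapsto Q_tf(x)$ fails to be differentiable) outside of which
\[
\frac{\d}{\d t}Q_tf(x) = -\frac{[D^{\pm}(x,t)]^2}{2t^2}.
\]

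For every $t \in (0,\infty) \setminus (N_x \cup N'_x)$ we have $D^+(x,t) = D^-(x,t)$, so the right-hand side above is unambiguously equal to $-[D^+(x,t)]^2/(2t^2)$. Combining this with the Lipschitz bound from Proposition~\ref{prop:slopesqt}, namely $|DQ_tf|(x) \le D^+(x,t)/t$, yields
\[
\frac{\d}{\d t}Q_tf(x) + \tfrac12|DQ_tf|^2(x) \;\le\; -\frac{[D^+(x,t)]^2}{2t^2} + \frac12\cdot\frac{[D^+(x,t)]^2}{t^2} \;=\; 0,
\]
which is \eqref{eq:hjbsus} at every $t$ outside the countable set $N_x \cup N'_x$.

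There is no real obstacle here: all the work has already been done in the preceding three propositions. The slight subtlety worth flagging is that Proposition~\ref{prop:timederivative} states the time-derivative identity using the ambiguous notation $D^{\pm}$, which is legitimate precisely because the identity is asserted only at times where $D^+(x,\cdot)$ and $D^-(x,\cdot)$ coincide; it is at exactly those times that one can plug the sharper upper bound $D^+(x,t)/t$ for the slope into the dissipation expression and obtain cancellation. One should also remark that, by Proposition~\ref{prop:timederivative}, $t \mapsto Q_tf(x)$ is locally Lipschitz on $(0,\infty)$, so the pointwise time derivative exists a.e., giving meaning to the left-hand side of \eqref{eq:hjbsus}.
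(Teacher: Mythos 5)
Your proof is correct and follows exactly the route the paper takes: the paper's own argument is just the one-line observation that the claim is a direct consequence of Proposition~\ref{prop:timederivative} and Proposition~\ref{prop:slopesqt}, which you have simply spelled out (including the countable exceptional sets from Proposition~\ref{prop:dmon}).
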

\begin{proof}
The claim is a direct consequence of
Proposition~\ref{prop:timederivative} and
Proposition~\ref{prop:slopesqt}.
\end{proof}

We just proved that in an arbitrary metric space the Hopf-Lax
formula produces subsolutions of the Hamilton-Jacobi equation. Our
aim now is to prove that if $(X,\sfd)$ is a geodesic space, then the
same formula provides also supersolutions.

\begin{theorem}[Supersolution of HJ]\label{thm:supersol}
Assume that $(X,\sfd)$ is a geodesic space. Then equality holds in
\eqref{eq:hjbss}. In particular, for all $x\in X$ it holds
\[
\frac{\d}{\d t}Q_tf(x)+\frac 12 |D Q_tf|^2(x)=0,
\]
with at most countably many exceptions in $(0,\infty)$.
\end{theorem}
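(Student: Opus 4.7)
The plan is to complement Proposition~\ref{prop:slopesqt} by establishing the matching lower bound $|DQ_tf|(x) \geq D^+(x,t)/t$. Once this reverse inequality is in place, \eqref{eq:hjbss} becomes an equality, and the Hamilton-Jacobi identity follows by plugging this into Proposition~\ref{prop:timederivative} and invoking Proposition~\ref{prop:dmon}, which guarantees that $D^+(x,\cdot)=D^-(x,\cdot)$ outside a countable exceptional set of times; at every other $t>0$ one gets $\tfrac{d}{dt}Q_tf(x) = -[D^+(x,t)]^2/(2t^2) = -\tfrac12|DQ_tf|^2(x)$.

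The geodesic assumption on $(X,\sfd)$ enters by allowing us to produce an explicit approximating sequence for the slope, moving from $x$ toward a furthest minimizer of $F(t,x,\cdot)$. Concretely, fix $x\in X$ and $t>0$; compactness of $X$ and continuity of $F(t,x,\cdot)$ give a point $y\in X$ with $F(t,x,y)=Q_tf(x)$ and $\sfd(x,y)=D^+(x,t)$. If $D^+(x,t)=0$ the bound is trivial, so assume $y\neq x$ and pick a constant speed geodesic $\gamma:[0,1]\to X$ with $\gamma_0=x$, $\gamma_1=y$. Using $y$ as a (non-optimal) competitor for $Q_tf(\gamma_s)$ and remembering that $\sfd(\gamma_s,y)=(1-s)\sfd(x,y)$, one has
$$
Q_tf(\gamma_s) \leq f(y)+\frac{(1-s)^2\sfd^2(x,y)}{2t},
\qquad
Q_tf(x) = f(y)+\frac{\sfd^2(x,y)}{2t},
$$
so subtracting and dividing by $\sfd(x,\gamma_s)=s\,\sfd(x,y)$ yields
$$
\frac{Q_tf(x)-Q_tf(\gamma_s)}{\sfd(x,\gamma_s)} \;\geq\; \frac{(2-s)\,\sfd(x,y)}{2t}.
$$
Letting $s\downarrow 0$ gives $|DQ_tf|(x)\geq \sfd(x,y)/t = D^+(x,t)/t$, as required.

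I expect the only subtle point to be the choice of the test direction: one must select the geodesic pointing to a minimizer that realizes $D^+$ (rather than, say, one realizing $D^-$), because it is the \emph{furthest} minimizer of $F(t,x,\cdot)$ that makes the competitor estimate sharp. Once this choice is identified the computation is elementary, and combining the two inequalities with Proposition~\ref{prop:timederivative} concludes the proof of the Hamilton-Jacobi equality at all times $t>0$ outside a countable set.
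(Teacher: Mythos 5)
Your proposal is correct and follows essentially the same route as the paper: choose a minimizer $y$ of $F(t,x,\cdot)$ realizing $D^+(x,t)$, run a constant speed geodesic from $x$ to $y$, use $y$ as competitor at $\gamma_s$ to get the lower bound $|D Q_tf|(x)\geq D^+(x,t)/t$, and conclude via Propositions~\ref{prop:timederivative}, \ref{prop:slopesqt} and \ref{prop:dmon}. The only cosmetic difference is that the paper phrases the limit as a bound on the descending slope $|D^-Q_tf|(x)$, which is in any case dominated by the slope, so the argument is the same.
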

\begin{proof}
Let $y$ be a minimum of $F(t,x,\cdot)$ such that $\sfd(x,y)=D^+(x,t)$. Let
$\gamma:[0,1]\to X$ be a constant speed geodesic connecting $x$ to
$y$.
We have
\[
\begin{split}
 Q_tf(x)-Q_tf(\gamma_s)&\geq f(y)+\frac{\sfd^2(x,y)}{2t}-f(y)-\frac{\sfd^2(\gamma_s,y_i)}{2t}\\
&=\frac{\sfd^2(x,y)-\sfd^2(\gamma_s,y)}{2t}=\frac{\big(D^+(x,t)\big)^2(2s-s^2)}{2t}.
\end{split}
\]
Therefore we obtain
\[
\lims_{s\downarrow 0}\frac{Q_tf(x)-Q_tf(\gamma_s)}{\sfd(x,\gamma_s)}
=\lims_{s\downarrow 0}\frac{Q_tf(x)-Q_tf(\gamma_s)}{sD^+(x,t)}\geq \frac{D^+(x,t)}t
\]
Since $s\mapsto\gamma_s$ is a particular family converging to $x$ we
deduce
\[
|D^-Q_tf|(x)\geq \frac{D^+(x,t)}{t}.
\]
Taking into account Proposition~\ref{prop:timederivative} and
Proposition~\ref{prop:slopesqt} we conclude.
\end{proof}

\section{Weak definitions of gradient}\label{se:Sobolev}

In this section we introduce two weak notions of `norm of the differential', one
inspired by Cheeger's seminal paper \cite{Cheeger00}, that we call
minimal relaxed slope and denote by $\relgrad f$, and one inspired by the papers of Koskela-MacManus
\cite{Koskela-MacManus98} and of Shanmugalingam
\cite{Shanmugalingam00}, 
that we call minimal weak upper gradient and denote by $\weakgrad f$. Notice that, as for the slopes, the objects that we are going to define are naturally in duality with the distance, thus are cotangent notion: that's why we use the `$D$' instead of the `$\nabla$' in the notation. Still, we will continue speaking of upper gradients and their weak counterparts to be aligned with the convention used in the literature (see \cite{Gigli12bis} for a broader discussion on this distinction between tangent and cotangent objects and its effects on calculus).

We compare our concepts with those of the original papers in
Subsection~\ref{sec:comparegradients}, where we show that all these approaches a posteriori coincide.
As usual, we will adopt the simplifying assumption that
$(X,\sfd,\mm)$ is compact and normalized metric measure space, i.e.\ 
$(X,\sfd)$ is compact and $\mm\in \prob X$.

\subsection{The ``vertical'' approach: minimal relaxed slope}\label{se:rellip}

\begin{definition}[Relaxed slopes]\label{def:genuppergrad} We say that $G\in L^2(X,\mm)$ is a relaxed
slope of $f\in L^2(X,\mm)$ if there exist $\tilde{G}\in L^2(X,\mm)$ and Lipschitz
functions $f_n: X\to\R$ such that:
\begin{itemize}
\item[(a)] $f_n\to f$ in $L^2(X,\mm)$ and $|D f_n|$ weakly converges to
$\tilde{G}$ in $L^2(X,\mm)$;
\item[(b)] $\tilde{G}\leq G$ $\mm$-a.e.~in $X$.
\end{itemize}
We say that $G$ is the minimal relaxed slope of $f$ if its
$L^2(X,\mm)$ norm is minimal among relaxed slopes. We shall denote
by $\relgrad f$ the minimal relaxed slope.
\end{definition}

Using Mazur's lemma and \eqref{eq:subadd} 
(see Proposition \ref{le:strongappr})
it is possible to show
that an equivalent characterization of relaxed slopes can be given
by modifying (a) as follows: $\tilde{G}$ is the \emph{strong} limit
in $L^2(X,\mm)$ of $G_n\geq|D f_n|$. The definition of relaxed
slope we gave is useful to show existence of relaxed slopes (as soon
as an approximating sequence $(f_n)$ with $|D f_n|$ bounded in
$L^2(X,\mm)$ exists) while the equivalent characterization is useful
to perform diagonal arguments and to show that the class of relaxed
slopes is a convex closed set. Therefore the definition of $\relgrad
f$ is well posed.

\begin{lemma}[Locality]\label{le:local}
Let $G_1,\,G_2$ be relaxed slopes of $f$. Then $\min\{G_1,G_2\}$ is
a relaxed slope as well. In particular, for any relaxed slope $G$ it
holds
\[
\relgrad  f\leq G\qquad\text{$\mm$-a.e.~in $X$.}
\]
\end{lemma}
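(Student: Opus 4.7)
My plan is to establish the lattice property (closure under $\min$) of the set of relaxed slopes via a Borel cut-and-paste construction, then deduce the pointwise minimality of $\relgrad f$ from its $L^2$-minimality as an immediate corollary. The key tool will be a Leibniz-type estimate: for Lipschitz functions $u,v$ and any Lipschitz $\chi\colon X\to[0,1]$, the algebraic identity
\begin{equation*}
(\chi u + (1-\chi) v)(y) - (\chi u + (1-\chi) v)(x) = \chi(y)(u(y)-u(x)) + (1-\chi(y))(v(y)-v(x)) + (\chi(y)-\chi(x))(u(x)-v(x))
\end{equation*}
combined with the definition of the slope yields the pointwise bound
\begin{equation*}
|D(\chi u + (1-\chi) v)|(x) \leq \chi(x)|Du|(x) + (1-\chi(x))|Dv|(x) + |D\chi|(x)\,|u(x)-v(x)|.
\end{equation*}

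Next, given relaxed slopes $G_1,G_2$ of $f$ with Lipschitz approximating sequences $f_n^i\to f$ in $L^2(X,\mm)$ and $|Df_n^i|\weakto \tilde G_i\leq G_i$ weakly in $L^2$, I fix an arbitrary Lipschitz $\chi\colon X\to[0,1]$ and set $h_n:=\chi f_n^1+(1-\chi)f_n^2$. The estimate above gives
\begin{equation*}
|Dh_n| \leq \chi|Df_n^1| + (1-\chi)|Df_n^2| + |D\chi|\,|f_n^1-f_n^2|.
\end{equation*}
The last term vanishes in $L^2$ because $\|f_n^1-f_n^2\|_{L^2}\to 0$ and $|D\chi|$ is bounded, while the first two terms converge weakly in $L^2$ to $\chi\tilde G_1+(1-\chi)\tilde G_2$ since $\chi,1-\chi\in L^\infty$. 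Hence $(|Dh_n|)$ is bounded in $L^2$, and integrating the inequality against any nonnegative function in $L^2$ shows that any weak limit is dominated by $\chi G_1+(1-\chi)G_2$ $\mm$-a.e. As $h_n\to f$ in $L^2$, the function $\chi G_1+(1-\chi)G_2$ is a relaxed slope.

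To pass from Lipschitz weights to the indicator of the Borel set $A:=\{G_1\leq G_2\}$, I invoke inner regularity of the Radon measure $\mm$ on the compact space $X$ to produce increasing compact sets $K_k\subset A$ and $L_k\subset X\setminus A$ with $\mm((K_k\cup L_k)^c)\to 0$, and define
\begin{equation*}
\chi_k(x) := \frac{\sfd(x,L_k)}{\sfd(x,K_k)+\sfd(x,L_k)}.
\end{equation*}
Since $K_k\cap L_k=\emptyset$ and both sets are compact, the denominator is bounded below by $\sfd(K_k,L_k)>0$, so $\chi_k$ is Lipschitz with values in $[0,1]$, equal to $1$ on $K_k$ and $0$ on $L_k$; consequently $\chi_k\to\chi_A$ $\mm$-a.e. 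By the previous paragraph each $\chi_k G_1+(1-\chi_k)G_2$ is a relaxed slope, and by dominated convergence they converge in $L^2$ to $G_1\chi_A+G_2\chi_{X\setminus A}=\min\{G_1,G_2\}$. Since the class of relaxed slopes is closed in $L^2$ (via the Mazur-based equivalent characterization discussed just after Definition~\ref{def:genuppergrad}), $\min\{G_1,G_2\}$ is itself a relaxed slope. For the final assertion, given an arbitrary relaxed slope $G$, I apply this to the pair $G,\relgrad f$: the relaxed slope $\min\{G,\relgrad f\}$ satisfies $\|\min\{G,\relgrad f\}\|_{L^2}\leq\|\relgrad f\|_{L^2}$, so $L^2$-minimality of $\relgrad f$ forces equality of the two norms, whence $\min\{G,\relgrad f\}=\relgrad f$ $\mm$-a.e.\ and therefore $\relgrad f\leq G$ $\mm$-a.e.

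The main obstacle is precisely the gluing step: one cannot directly take $\chi$ to be the indicator of a Borel set, since then $|D\chi|$ is ill-defined, so a Lipschitz approximation is unavoidable. The Leibniz-type estimate is what makes the smooth gluing work, as it absorbs the error term $|D\chi|\,|f_n^1-f_n^2|$ into a genuine $L^2$-infinitesimal, using crucially that $f_n^1$ and $f_n^2$ share the common $L^2$-limit $f$; the closedness in $L^2$ of the set of relaxed slopes then allows one to pass from the sub-optimal Lipschitz weight to the sharp indicator $\chi_A$.
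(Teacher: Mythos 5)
Your proof is correct and follows essentially the same strategy as the paper: glue the two Lipschitz approximating sequences with a Lipschitz weight, control the slope of the glued functions, pass to weak $L^2$ limits, and use the $L^2$-closedness of the class of relaxed slopes to reach the indicator of $\{G_1\le G_2\}$, with minimality of $\relgrad f$ then following from the norm-minimality in the definition. The only (harmless) differences are that your sharper convex-combination estimate $|D(\chi u+(1-\chi)v)|\le \chi|Du|+(1-\chi)|Dv|+|D\chi|\,|u-v|$ avoids the paper's intermediate error term $(G_1+2G_2)$ on the transition region, and that you approximate the Borel set from inside by compacts via inner regularity instead of first reducing to an open set and using distance-function cutoffs.
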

\begin{proof}
It is sufficient to prove that if $B\subset X$ is a Borel set, then
$\nchi_BG_1+\nchi_{X\setminus B}G_2$ is a relaxed slope of $f$. By
approximation, taking into account the closure of the class of
relaxed slopes, we can assume with no loss of generality that $B$ is
an open set. We fix $r>0$ and a Lipschitz function $\phi_r:X\to
[0,1]$ equal to $0$ on $X\setminus B_r$ and equal to $1$ on
$B_{2r}$, where the open sets $B_s\subset B$ are defined by
$$
B_s:=\left\{x\in X:\ {\rm dist}(x,X\setminus B)> s\right\}\subset B.
$$
Let now $f_{n,i}$, $i=1,\,2$, be Lipschitz and $L^2$ functions
converging to $f$ in $L^2(X,\mm)$ as $n\to\infty$, with $|D
f_{n,i}|$ weakly convergent to $G_i$ and set $f_n:=\phi_r
f_{n,1}+(1-\phi_r)f_{n,2}$. Then, $|D f_n|=|D f_{n,1}|$ on
$B_{2r}$ and $|D f_n|=|D f_{n,2}|$ on
$X\setminus\overline{B_r}$; in $\overline{B_r}\setminus B_{2r}$, by
applying \eqref{eq:subadd} and \eqref{eq:leibn}, we can estimate
$$
|D f_n|\leq |D f_{n,2}|+{\rm
Lip}(\phi_r)|f_{n,1}-f_{n,2}|+ \phi_r\bigl(|D f_{n,1}|+|D
f_{n,2}|\bigr).
$$
Since $\overline{B_r}\subset B$, by taking weak limits of a
subsequence, it follows that
$$
\nchi_{B_{2r}}G_1+\nchi_{X\setminus\overline{B_r}}G_2+\nchi_{B\setminus
B_{2r}}(G_1+2G_2)
$$
is a relaxed slope of $f$. Letting $r\downarrow 0$ gives that
$\nchi_BG_1+\nchi_{X\setminus B}G_2$ is a relaxed slope as well.

For the second part of the statement argue by contradiction: let $G$
be a relaxed slope of $f$ and assume that $B=\{G<\relgrad f\}$ is
such that $\mm(B)>0$. Consider the relaxed slope $G\nchi_B+\relgrad
f\nchi_{X\setminus B}$: its $L^2$ norm is strictly less than the
$L^2$ norm of $\relgrad f$, which is a contradiction.
\end{proof}

A trivial consequence of the definition and of the locality principle we just proved is that if
$f:X\to\R$ is Lipschitz it holds:
\begin{equation}
\label{eq:facile} \relgrad  f\leq |D f|\qquad\text{$\mm$-a.e.
in $X$.}
\end{equation}

We also remark that it is possible to obtain the minimal relaxed
slope as strong limit in $L^2$ of slopes of Lipschitz functions, and
not only weak, as shown in the next proposition.

\begin{proposition}[Strong approximation]\label{le:strongappr}
If $f\in L^2(X,\mm)$ has a relaxed slope, there exist Lipschitz
functions $f_n$ convergent to $f$ in $L^2(X,\mm)$ with $|D
f_n|$ convergent to $\relgrad  f$ in $L^2(X,\mm)$.
\end{proposition}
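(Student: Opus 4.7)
The plan is to start from any Lipschitz approximation witnessing that $\relgrad f$ is a relaxed slope and to upgrade the weak $L^2$-convergence of the slopes to strong convergence via Mazur's lemma combined with the subadditivity \eqref{eq:subadd} of the slope.

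First I would select Lipschitz functions $g_n\to f$ in $L^2(X,\mm)$ with $|D g_n|\weakto \tilde G$ in $L^2(X,\mm)$ for some $\tilde G\le \relgrad f$; such a sequence exists by Definition~\ref{def:genuppergrad} applied to the relaxed slope $G=\relgrad f$. Since $(g_n,\tilde G)$ itself exhibits $\tilde G$ as a relaxed slope, minimality forces $\|\relgrad f\|_{L^2}\le \|\tilde G\|_{L^2}$, while the pointwise inequality $\tilde G\le\relgrad f$ gives the opposite norm inequality; hence $\tilde G=\relgrad f$ $\mm$-a.e.

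Next, I would apply Mazur's lemma to $(|D g_n|)$ in the Hilbert space $L^2(X,\mm)$: there exist finite convex combinations
\[
H_n := \sum_{k\ge n}\lambda_{n,k}|D g_k|,\qquad \lambda_{n,k}\ge 0,\ \sum_{k\ge n}\lambda_{n,k}=1,
\]
converging strongly to $\relgrad f$ in $L^2(X,\mm)$. Setting $h_n:=\sum_{k\ge n}\lambda_{n,k}g_k$, the $h_n$ are Lipschitz and $h_n\to f$ strongly in $L^2(X,\mm)$, while iterating \eqref{eq:subadd} across the finite convex combination yields the pointwise bound $|D h_n|\le H_n$ $\mm$-a.e. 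Any weak subsequential limit $H$ of $(|D h_n|)$ then satisfies $H\le \relgrad f$ $\mm$-a.e., because $|D h_n|\le H_n\to\relgrad f$ strongly. So $(h_n,H)$ exhibits $H$ as a relaxed slope of $f$, and Lemma~\ref{le:local} gives $\relgrad f\le H$ $\mm$-a.e., hence $H=\relgrad f$. Uniqueness of the weak limit propagates to the full sequence: $|D h_n|\weakto \relgrad f$ in $L^2(X,\mm)$. Finally, $|D h_n|\le H_n$ gives
\[
\limsup_{n\to\infty}\bigl\||D h_n|\bigr\|_{L^2}\le \lim_{n\to\infty}\|H_n\|_{L^2}=\|\relgrad f\|_{L^2},
\]
matched by weak lower semicontinuity; norm convergence plus weak convergence in a Hilbert space implies strong convergence, so $|D h_n|\to \relgrad f$ in $L^2(X,\mm)$.

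The main technical point to watch is that Mazur's lemma produces convex combinations of the \emph{slopes} $|D g_k|$, whereas the quantity we ultimately need to control is the slope $|D h_n|$ of the \emph{function} $h_n$; the two are linked only by the pointwise inequality supplied by \eqref{eq:subadd}. The minimality characterization of $\relgrad f$ is precisely what traps $|D h_n|$ from below in the limit and converts this one-sided bound into an equality. Without it, one would only obtain Lipschitz approximations whose slopes dominate $\relgrad f$ in the limit, rather than recover it.
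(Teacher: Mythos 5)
Your argument is correct and follows essentially the same route as the paper's proof: Mazur's lemma applied to the slopes, the corresponding convex combinations of the functions, the subadditivity bound $|D h_n|\le H_n$, identification of any weak limit point with $\relgrad f$ via minimality/locality, and the upgrade from weak to strong convergence through the $\limsup$ norm bound. The only difference is that you spell out explicitly the preliminary step $\tilde G=\relgrad f$ and the subsequence argument, which the paper leaves implicit.
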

\begin{proof}
If $g_i\to f$ in $L^2$ and $|D g_i|$ weakly converges to
$\relgrad  f$ in $L^2$, by Mazur's lemma we can find a sequence of
convex combinations of $|D g_i|$ strongly convergent to
$\relgrad  f$ in $L^2$; the corresponding convex combinations of
$g_i$, that we shall denote by $f_n$, still converge in $L^2$ to $f$
and $|D f_n|$ is dominated by the convex combinations of
$|D g_i|$. It follows that
$$
\lims_{n\to\infty}\int_X|D f_n|^2\,\d\mm\leq
\lims_{i\to\infty}\int_X|D g_i|^2\,\d\mm=\int_X\relgrad f^2\,\d\mm.
$$
This implies at once that $|D f_n|$ weakly converges to
$\relgrad  f$ (because any limit point in the weak topology is a
relaxed slope with minimal norm) and that the convergence is strong.
\end{proof}

\begin{theorem} \label{thm:cheeger} The Cheeger energy functional
\begin{equation}\label{def:Cheeger}
\C(f):=\frac{1}{2}\int_X \relgrad f^2 \,\d\mm,
\end{equation}
set to $+\infty$ if $f$ has no relaxed slope, is convex and lower
semicontinuous in $L^2(X,\mm)$.
\end{theorem}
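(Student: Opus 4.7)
Both properties will follow from the strong approximation result (Proposition \ref{le:strongappr}), combined with the subadditivity \eqref{eq:subadd} and the closure/minimality of the class of relaxed slopes. The overall strategy is to reduce any algebraic operation at the level of $f$ to the same operation on Lipschitz approximants, exploit \eqref{eq:subadd} to bound the slopes of the combinations, then pass to a limit in $L^2$ and invoke minimality of $\relgrad{\cdot}$.

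\textbf{Convexity.} Given $f_0,\,f_1\in L^2(X,\mm)$ with relaxed slopes and $\lambda\in[0,1]$, I would use Proposition \ref{le:strongappr} to pick Lipschitz sequences $f_{i,n}\to f_i$ in $L^2$ with $|D f_{i,n}|\to \relgrad{f_i}$ strongly in $L^2$. Setting $h_n:=(1-\lambda)f_{0,n}+\lambda f_{1,n}$ gives $h_n\to f_\lambda:=(1-\lambda)f_0+\lambda f_1$ in $L^2$, and \eqref{eq:subadd} yields
\[
|D h_n|\le (1-\lambda)|D f_{0,n}|+\lambda|D f_{1,n}|\longrightarrow (1-\lambda)\relgrad{f_0}+\lambda\relgrad{f_1}
\]
strongly in $L^2$. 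Extracting a weak limit of $|D h_n|$ shows that $G:=(1-\lambda)\relgrad{f_0}+\lambda\relgrad{f_1}$ is a relaxed slope of $f_\lambda$; by minimality $\relgrad{f_\lambda}\le G$ $\mm$-a.e., and convexity of $t\mapsto t^2$ gives $\C(f_\lambda)\le (1-\lambda)\C(f_0)+\lambda\C(f_1)$.

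\textbf{Lower semicontinuity.} Suppose $f_n\to f$ in $L^2(X,\mm)$ and set $\ell:=\liminf_n \C(f_n)$; it suffices to treat $\ell<\infty$ and, passing to a subsequence, assume $\C(f_n)\to\ell$. The goal is to produce an approximating Lipschitz sequence for $f$ whose slopes are controlled by the $\relgrad{f_n}$'s in $L^2$. To do this, for each $n$ apply Proposition \ref{le:strongappr} to obtain Lipschitz $g_{n,k}\to f_n$ in $L^2$ with $|D g_{n,k}|\to\relgrad{f_n}$ strongly in $L^2$ as $k\to\infty$. A diagonal selection $h_n:=g_{n,k(n)}$ gives $h_n\to f$ in $L^2$ and $\bigl\||D h_n|-\relgrad{f_n}\bigr\|_{L^2}\to 0$. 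Since $\{\relgrad{f_n}\}$ is bounded in $L^2$, so is $\{|D h_n|\}$; up to a further subsequence $|D h_n|\rightharpoonup \tilde G$ weakly in $L^2$, so $\tilde G$ is a relaxed slope of $f$. Minimality and weak $L^2$-lower semicontinuity of the norm then yield
\[
2\C(f)=\int_X \relgrad{f}^{\,2}\,\d\mm\le \int_X \tilde G^{\,2}\,\d\mm\le \liminf_{n\to\infty}\int_X |D h_n|^2\,\d\mm=\liminf_{n\to\infty}\int_X\relgrad{f_n}^{\,2}\,\d\mm=2\ell.
\]

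\textbf{Main obstacle.} The only delicate point is the diagonal extraction in the lower semicontinuity step: one must arrange simultaneously $h_n\to f$ in $L^2$ and an $L^2$-control of $|D h_n|$ by $\relgrad{f_n}$ that survives the weak limit. Proposition \ref{le:strongappr} makes this routine since it upgrades weak to strong convergence of the approximating slopes, so a standard $\eps_n$-diagonal argument applied to the double sequence $(g_{n,k},|D g_{n,k}|)$ closes the gap.
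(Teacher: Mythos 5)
Your proof is correct and follows essentially the same route as the paper: convexity via the subadditivity \eqref{eq:subadd} applied to Lipschitz approximants (showing convex combinations of relaxed slopes are relaxed slopes of the convex combination), and lower semicontinuity by a diagonal argument based on the strong approximation of Proposition \ref{le:strongappr}. You have simply spelled out in detail what the paper states in two lines.
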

\begin{proof} A simple byproduct of condition \eqref{eq:subadd} is that $\alpha F+\beta G$ is a
relaxed slope of $\alpha f+\beta g$ whenever $\alpha,\,\beta$ are
nonnegative constants and $F,\,G$ are relaxed slopes of $f,\,g$
respectively. Taking $F=\relgrad f$ and $G=\relgrad g$ yields the
convexity of $\C$, while lower semicontinuity follows by a simple
diagonal argument based on the strong approximation property stated
in Proposition~\ref{le:strongappr}.
\end{proof}

\begin{proposition}[Chain rule]\label{prop:chain}
If $f\in L^2(X,\mm)$ has a relaxed slope and $\phi:X\to\R$ is
Lipschitz and $C^1$, then $\relgrad {\phi(f)}= |\phi'(f)|\relgrad
f$ $\mm$-a.e.~in $X$.
\end{proposition}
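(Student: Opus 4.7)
The plan is to prove the two inequalities $\relgrad{\phi(f)}\le|\phi'(f)|\relgrad f$ and $\relgrad{\phi(f)}\ge|\phi'(f)|\relgrad f$ separately, $\mm$-a.e.\ on $X$. (Here and below I read the statement as $\phi:\R\to\R$.)

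For the first inequality I would use Proposition~\ref{le:strongappr} to pick Lipschitz functions $f_n\to f$ in $L^2(X,\mm)$ with $|D f_n|\to\relgrad f$ strongly in $L^2$. Since $\phi$ is Lipschitz, $\phi\circ f_n\to\phi\circ f$ in $L^2$. The elementary chain rule for the slope of a Lipschitz composition (apply a mean-value argument to $\phi$) gives, pointwise,
\[
|D(\phi\circ f_n)|(x)\le|\phi'(f_n(x))|\,|D f_n|(x).
\]
Passing to a subsequence with $f_n\to f$ $\mm$-a.e., continuity and uniform boundedness of $\phi'$ yield $|\phi'(f_n)|\to|\phi'(f)|$ $\mm$-a.e.\ and in a dominated fashion; combined with strong $L^2$ convergence of $|D f_n|$, the product $|\phi'(f_n)|\,|D f_n|$ converges to $|\phi'(f)|\relgrad f$ strongly in $L^2$. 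Therefore $|\phi'(f)|\relgrad f$ is a relaxed slope of $\phi(f)$, and minimality of $\relgrad{\phi(f)}$ gives the claimed bound.

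For the reverse inequality the key trick is to perturb $\phi$ so that it becomes invertible. Set $L:=\Lip(\phi)$ and, for $\lambda>L$, put $\phi_\lambda(t):=\phi(t)+\lambda t$; then $\phi_\lambda'=\phi'+\lambda\in[\lambda-L,\lambda+L]$, so $\phi_\lambda$ is a $C^1$-diffeomorphism of $\R$ with $C^1$ Lipschitz inverse $\psi_\lambda$, and $\psi_\lambda'(\phi_\lambda(f))=1/(\phi'(f)+\lambda)$. Applying the first inequality (already proved) to $\psi_\lambda$ and $\phi_\lambda(f)$ yields
\[
(\phi'(f)+\lambda)\,\relgrad f\le\relgrad{\phi_\lambda(f)}.
\]
On the other hand, subadditivity of relaxed slopes (which follows, as in the proof of Theorem~\ref{thm:cheeger}, from \eqref{eq:subadd} together with the approximation definition and Proposition~\ref{le:strongappr}) gives $\relgrad{\phi_\lambda(f)}=\relgrad{\phi(f)+\lambda f}\le\relgrad{\phi(f)}+\lambda\relgrad f$. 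Cancelling $\lambda\relgrad f$ leaves $\phi'(f)\relgrad f\le\relgrad{\phi(f)}$. Repeating the argument with $-\phi$ in place of $\phi$, and using that $\relgrad{-g}=\relgrad g$ (obvious from the definition, by replacing the approximating sequence $g_n$ by $-g_n$), produces $-\phi'(f)\relgrad f\le\relgrad{\phi(f)}$. The two estimates together give $|\phi'(f)|\relgrad f\le\relgrad{\phi(f)}$.

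The main obstacle is the lower bound: one cannot invert $\phi$ directly because $\phi'$ may vanish or change sign, so the naive idea of applying the upper bound to $\phi^{-1}$ fails. The $\lambda$-perturbation $\phi_\lambda=\phi+\lambda\mathrm{id}$ is precisely what is needed to restore bijectivity, and once subadditivity of relaxed slopes is available the cancellation of the auxiliary $\lambda\relgrad f$ term is immediate. The rest of the proof consists of routine approximation and dominated-convergence arguments.
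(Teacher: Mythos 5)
Your proof is correct, and while the upper bound $\relgrad{\phi(f)}\le|\phi'(f)|\relgrad f$ is obtained exactly as in the paper (pointwise chain rule for slopes applied to the strongly approximating sequence of Proposition~\ref{le:strongappr}), your lower bound goes by a genuinely different route. The paper never inverts $\phi$: after rescaling so that $|\phi'|\le 1$, it writes the identity as $\phi+\psi_1$ with $\psi_1(z)=z-\phi(z)$, $\psi_1'\ge 0$, and uses subadditivity plus the already-proved upper bound to squeeze $\relgrad f\le\relgrad{(\phi(f))}+\relgrad{(\psi_1(f))}\le\bigl(\phi'(f)+\psi_1'(f)\bigr)\relgrad f=\relgrad f$ on $f^{-1}(\{\phi'\ge0\})$, forcing all inequalities to be equalities there, and then repeats with $\psi_2(z)=-z-\phi(z)$ on $f^{-1}(\{\phi'\le0\})$ (the identity $\relgrad{(-f)}=\relgrad f$ being obtained by applying the upper bound twice with $\phi(r)=-r$). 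You instead restore invertibility by passing to $\phi_\lambda=\phi+\lambda\,\mathrm{id}$ with $\lambda>\Lip(\phi)$, apply the upper bound to the $C^1$ Lipschitz inverse $\psi_\lambda$, and cancel the auxiliary term via subadditivity; the $\pm\phi$ symmetry then recovers the absolute value. Both arguments rest on the same two pillars, namely the relaxation upper bound and the subadditivity coming from \eqref{eq:subadd}, so neither is deeper than the other; the paper's squeeze gives the identity directly on the two sign sets of $\phi'$ without any auxiliary parameter, while your inversion trick avoids both the rescaling and the case split and yields the global inequality in one stroke, at the modest price of the extra verifications you implicitly use: that $\phi_\lambda(f)\in L^2(X,\mm)$ (which uses $\mm(X)=1$, since $\phi_\lambda$ has only linear growth), that $\phi_\lambda(f)$ has a relaxed slope (from the upper bound applied to $\phi_\lambda$), and that $\psi_\lambda$ is indeed $C^1$ with bounded derivative, all of which hold. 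The cancellation of $\lambda\relgrad f$ is legitimate because $\relgrad f<\infty$ $\mm$-a.e.
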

\begin{proof}
We trivially have $|D\phi(f)|\leq|\phi'(f)||D f|$. If we
apply this inequality to the ``optimal'' approximating sequence of
Lipschitz functions given by Proposition~\ref{le:strongappr} we get
that $|\phi'(f)|\relgrad{f}$ is a relaxed slope of $\phi(f)$, so
that $\relgrad {\phi(f)}\leq |\phi'(f)|\relgrad f$ $\mm$-a.e.~in
$X$. Applying twice this inequality with $\phi(r):=-r$ we get $\relgrad f\leq\relgrad{(-f)}\leq \relgrad f$ and thus $\relgrad f=\relgrad{(-f)}$ 
$\mm$-a.e. in $X$. 

Up to a simple rescaling, we can assume $|\phi'|\leq 1$.  Let $\psi_1(z):=z-\phi(z)$, notice that $\psi_1'\geq 0$ and thus 
$\mm$-a.e. on $f^{-1}(\{\phi'\geq 0\})$ it holds
$$
\relgrad{f}\leq\relgrad{(\phi(f))}+\relgrad{(\psi_1(f))}\leq
\phi'(f)\relgrad{f}+\psi_1'(f)\relgrad{f}=\relgrad f,
$$
hence all the inequalities must be equalities, which forces $\relgrad{(\phi(f))}=\phi'(f)\relgrad f$ $\mm$-a.e. on $f^{-1}(\{\phi'\geq 0\})$.
Similarly, let $\psi_2(z)=-z-\phi(z)$ and notice that $\psi_2'\leq 0$, so that $\mm$-a.e. on $f^{-1}(\{\phi'\leq 0\})$ it holds
\[
\relgrad f=\relgrad{(-f)}\leq \relgrad{(\phi(f))}+\relgrad{(\psi_2(f))}\leq
-\phi'(f)\relgrad{f}-\psi_2'(f)\relgrad{f}=\relgrad f.
\] 
As before we can conclude that $\relgrad{(\phi(f))}=-\phi'(f)\relgrad f$ $\mm$-a.e. on $f^{-1}(\{\phi'\leq 0\})$.
\end{proof}

Still by approximation, it is not difficult to show that $\phi(f)$ has a
relaxed slope if $\phi$ is Lipschitz, and that
$\relgrad {\phi(f)}=|\phi'(f)|\relgrad f$ $\mm$-a.e.~in $X$. In this
case $\phi'(f)$ is undefined at points $x$ such that $\phi$ is not
differentiable at $f(x)$, on the other hand the formula still makes
sense because $\relgrad{f}=0$ $\mm$-a.e.~on $f^{-1}(N)$ for any
Lebesgue negligible set $N\subset\R$. Particularly useful is the
case when $\phi$ is a truncation function, for instance
$\phi(z)=\min\{z,M\}$. In this case
$$
\relgrad{\min\{f,M\}}=
\begin{cases}
\relgrad{f}&\text{if $f(x)<M$}
\\
0&\text{if $f(x)\geq M$.}
\end{cases}
$$
Analogous formulas hold for truncations from below.

\subsubsection{Laplacian: definition and basic properties}\label{se:laplacian}

Since the domain of $\C$ is dense in $L^2(X,\mm)$ (it includes
Lipschitz functions), the Hilbertian theory of gradient flows (see
for instance \cite{Brezis73}, \cite{Ambrosio-Gigli-Savare08}) can be
applied to Cheeger's functional \eqref{def:Cheeger} to provide, for
all $f_0\in L^2(X,\mm)$, a locally Lipschitz continuous map
$t\mapsto f_t$ from $(0,\infty)$ to $L^2(X,\mm)$, with $f_t\to f_0$
as $t\downarrow 0$, whose derivative satisfies
\begin{equation}\label{eq:ODE}
  \frac{\d}{\d t}f_t\in -\partial\C(f_t)\qquad\text{for a.e.~$t$.}
\end{equation}
Here $\partial\C(g)$ denotes the subdifferential of $\C$ at $g\in
D(\C)$ in the sense of convex analysis, i.e.
$$
\partial\C(g):=\left\{\xi\in L^2(X,\mm):\
\C(f)\geq\C(g)+\int_X\xi(f-g)\,\d\mm\,\,\,\forall f\in
L^2(X,\mm)\right\}.
$$
Another important regularizing effect of gradient flows of convex
l.s.c. functionals lies in the fact that 
for every $t>0$ (the opposite of) the right derivative
$-\tfrac{\d}{\d t_+} f_t=\lim_{h\downarrow0}\frac 1h(f_t-f_{t+h})$
exists and it is actually the element with minimal
$L^2(X,\mm)$ norm in $\partial^-\C(f_t)$. This motivates the next
definition:
\begin{definition}[Laplacian]\label{def:delta}
The Laplacian $\Delta f$ of $f\in L^2(X,\mm)$ is defined for those
$f$ such that $\partial\C(f)\neq\emptyset$. For those $f$, $-\Delta
f$ is the element of minimal $L^2(X,\mm)$ norm in $\partial\C(f)$.
The domain of $\Delta$ is defined as $D(\Delta)$.
\end{definition}
\begin{remark}[Potential lack of linearity]\label{re:laplnonlin}{\rm
It should be observed that in general the Laplacian - as we just
defined it - is \emph{not} a linear operator: the potential lack of
linearity is strictly related to the fact that potentially the space
$W^{1,2}(X,\sfd,\mm)$ is not Hilbert, because
$f\mapsto\int\relgrad{f}^2\,\d\mm$ need not be quadratic. For
instance if $X=\R^2$, $\mm$ is the Lebesgue measure and $\sfd$ is
the distance induced by the $L^\infty$ norm, then it is easily seen
that
$$
\relgrad{f}^2=\biggl(\biggl|\frac{\partial f}{\partial x}\biggr|+
\biggl|\frac{\partial f}{\partial y}\biggr|\biggr)^2.
$$
Even though the Laplacian is not linear, the trivial implication
\[
v\in\partial^- \C(f)\qquad\Rightarrow\qquad \lambda v\in\partial^- \C(\lambda f),\quad\forall \lambda\in\R,
\]
ensures that the Laplacian (and so the gradient flow of $\C$) is 1-homogenous.
}\fr
\end{remark}

We can now write
$$
\frac{\d}{\d t}f_t=\Delta f_t
$$
for gradient flows $f_t$ of $\C$, the derivative being understood in
$L^2(X,\mm)$, in accordance with the classical case. The classical
Hilbertian theory of gradient flows also ensures that
\begin{equation}
\label{eq:perdopo} \lim_{t\to\infty}\C(f_t)=0\qquad\text{and}\qquad
\frac {\d}{\d t}\C(f_t)=-\|\Delta f_t\|^2_{L^2(X,\smm)},\quad
\text{for a.e.~$t\in (0,\infty)$.}
\end{equation}

\begin{proposition}[Integration by parts]
\label{prop:deltaineq} For all $f\in D(\Delta)$, $g\in D(\C)$ it
holds
\begin{equation}
\label{eq:delta1} \left|\int_X g\Delta f\,\d\mm\right|\leq \int_X \relgrad g\relgrad f\,\d\mm.
\end{equation}
Also, let $f\in D(\Delta)$ and $\phi\in C^1(\R)$ with bounded
derivative on an interval containing the image of $f$. Then
\begin{equation}
\label{eq:delta2} \int_X \phi(f)\Delta f\,\d\mm= -\int_X \relgrad f^2\phi'(f)\,\d\mm.
\end{equation}
\end{proposition}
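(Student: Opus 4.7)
The plan is to exploit the definition of $-\Delta f$ as the element of minimal $L^2$-norm of $\partial\C(f)$, which in particular yields the subdifferential inequality
\[
\C(h)\geq \C(f)+\int_X(-\Delta f)(h-f)\,\d\mm\qquad\forall h\in L^2(X,\mm),
\]
and plug in suitable perturbations $h=f+\eps g$ or $h=f+\eps\phi(f)$, then let $\eps\to 0$. The key inputs are the subadditivity of the minimal relaxed slope (Theorem~\ref{thm:cheeger} and the remark preceding it) and the chain rule of Proposition~\ref{prop:chain}.

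For the first inequality, given $g\in D(\C)$ I would take $h=f+\eps g$ with $\eps\in\R$. By subadditivity, $\relgrad{(f+\eps g)}\le \relgrad f+|\eps|\relgrad g$, so expanding the square gives
\[
\C(f+\eps g)\le \C(f)+\eps\int_X\relgrad f\,\relgrad g\,\d\mm\cdot\mathrm{sign}(\eps)+\tfrac{\eps^2}2\int_X\relgrad g^2\,\d\mm,
\]
and actually, by symmetry, the bound $\C(f+\eps g)\le \C(f)+|\eps|\int\relgrad f\,\relgrad g\,\d\mm+\tfrac{\eps^2}2\int\relgrad g^2\,\d\mm$ is all we need. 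Combined with the subdifferential inequality this yields
\[
-\eps\int_X g\,\Delta f\,\d\mm\le |\eps|\int_X \relgrad f\,\relgrad g\,\d\mm+\tfrac{\eps^2}{2}\int_X\relgrad g^2\,\d\mm,
\]
and dividing by $|\eps|$ and sending $\eps\to 0^\pm$ gives both signs, hence \eqref{eq:delta1}.

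For the chain rule \eqref{eq:delta2}, the idea is to apply the subdifferential inequality along the one-parameter family $h_\eps:=\psi_\eps(f)=f+\eps\phi(f)$. Since $\phi\in C^1$ with bounded derivative on the image of $f$, for $|\eps|$ sufficiently small $\psi_\eps$ is $C^1$ with $\psi_\eps'=1+\eps\phi'>0$, so Proposition~\ref{prop:chain} gives exactly
\[
\relgrad{h_\eps}=(1+\eps\phi'(f))\,\relgrad f\quad\mm\text{-a.e.,}
\]
and expanding the square produces the clean identity
\[
\C(h_\eps)=\C(f)+\eps\int_X\phi'(f)\relgrad f^2\,\d\mm+\tfrac{\eps^2}{2}\int_X\phi'(f)^2\relgrad f^2\,\d\mm.
\]
Plugging this into $\C(h_\eps)\ge\C(f)-\eps\int\phi(f)\Delta f\,\d\mm$, dividing by $\eps$ and letting $\eps\to 0^+$ and $\eps\to 0^-$ yields the two opposite inequalities, hence equality in \eqref{eq:delta2}.

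The only subtle point I anticipate is justifying that the perturbed functions $f+\eps g$ and $f+\eps\phi(f)$ lie in $D(\C)$ with the predicted bound on the minimal relaxed slope. For part (1) this is immediate from subadditivity, but for part (2) it is essential that the chain rule of Proposition~\ref{prop:chain} gives an \emph{equality}, not just an inequality, since otherwise the quadratic expansion of $\C(h_\eps)$ would only provide a one-sided bound and we would not be able to upgrade \eqref{eq:delta2} from an inequality to an identity. Once that equality is in hand, the two one-sided limits $\eps\to 0^\pm$ do the rest without further analytic work.
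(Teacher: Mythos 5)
Your proposal is correct and follows essentially the same route as the paper: both parts rest on the subdifferential inequality for $-\Delta f$, with \eqref{eq:delta1} obtained by perturbing with $\eps g$ and using that $\relgrad f+|\eps|\relgrad g$ is a relaxed slope of $f+\eps g$, and \eqref{eq:delta2} obtained by perturbing with $\eps\phi(f)$ and using the \emph{equality} in the chain rule of Proposition~\ref{prop:chain}, exactly as in the paper (which merely phrases the two-sided limit by replacing $g$ with $-g$ instead of letting $\eps\to0^\pm$). Your closing observation that the equality in the chain rule is what upgrades \eqref{eq:delta2} from an inequality to an identity is precisely the point the paper's argument relies on.
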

\begin{proof}
Since $-\Delta f\in\partial^-\C(f)$ it holds
\[
\C(f)-\int_X \eps g\Delta f\,\d\mm\leq \C(f+\eps g),\qquad\forall g\in
L^2(X,\mm),\,\,\eps\in\R.
\]
For $\eps>0$, $\relgrad f+\eps \relgrad g$ is a relaxed slope of
$f+\eps g$ (possibly not minimal). Thus it holds $2\C(f+\eps
g)\leq\int_X(\relgrad f+\eps\relgrad g)^2\,\d\mm$ and therefore
\[
-\int_X\eps g\Delta f\,\d\mm\leq \frac12\int_X(\relgrad f+\eps\relgrad g)^2-\relgrad f^2\,\d\mm=
\eps\int_X\relgrad f\relgrad g\,\d\mm+o(\eps).
\]
Dividing by $\eps$, letting $\eps\downarrow 0$ and then repeating the argument with $-g$ in place of $g$ we get
\eqref{eq:delta1}.

For the second part we recall that, by the chain rule, $\relgrad
{(f+\eps \phi(f))}=(1+\eps \phi'(f))\relgrad f$ for $|\eps|$ small
enough. Hence
\[
\C(f+\eps \phi(f))-\C(f)= \frac{1}{2}\int_X\relgrad f^2\bigl((1+\eps
\phi'(f))^2-1\bigr)\,\d\mm=\eps\int_X\relgrad f^2
\phi'(f)\,\d\mm+o(\eps),
\]
which implies that for any $v\in \partial^-\C(f)$ it holds $\int_Xv
\phi(f)\,\d\mm=\int_X\relgrad f^2\phi'(f)\,\d\mm$, and gives the thesis
with $v=-\Delta f$.
\end{proof}

\begin{proposition}[Some properties of the gradient flow of $\C$]\label{prop:basecal}
Let $f_0\in L^2(X,\mm)$ and let $(f_t)$ be the gradient flow of $\C$
starting from $f_0$. Then the following properties hold.\\*
\noindent\underline{Mass preservation.} $\int f_t\,\d\mm=\int
f_0\,\d\mm$ for any $t\geq 0$.\\* \noindent\underline{Maximum
principle.} If $f_0\leq C$ (resp. $f_0\geq c$) $\mm$-a.e.~in $X$,
then $f_t\leq C$ (resp $f_t\geq c$) $\mm$-a.e.~in $X$ for any $t\geq
0$.\\* \noindent\underline{Entropy dissipation.} Suppose $0<c\leq
f_0\leq C<\infty$ $\mm$-a.e.. Then $t\mapsto\int f_t\log f_t\,\d\mm$
is absolutely continuous in $[0,\infty)$
and it holds
\[
\frac{\d}{\d t}\intX f_t\log
f_t\,\d\mm=-\intX\frac{\relgrad{f_t}^2}{f_t}\,\d\mm,\qquad \text{for
a.e.~$t\in (0,\infty)$.}
\]
\end{proposition}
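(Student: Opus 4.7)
The plan is that all three assertions reduce to differentiating a functional of the form $t\mapsto\int\eta(f_t)\,\d\mm$ along the flow, using the evolution equation $\frac{\d}{\d t}f_t=\Delta f_t$ in $L^2(X,\mm)$, and invoking the integration-by-parts identities of Proposition~\ref{prop:deltaineq}.

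\emph{Mass preservation.} I would differentiate $t\mapsto\int f_t\,\d\mm$ (corresponding to $\eta(r)=r$), obtaining $\int\Delta f_t\,\d\mm$, and apply \eqref{eq:delta1} with the test function $g\equiv 1$. Since $\relgrad{1}=0$, this gives $\bigl|\int\Delta f_t\,\d\mm\bigr|\leq 0$, so the integral is constant on $(0,\infty)$; continuity at $t=0$ in $L^2$ extends constancy to $[0,\infty)$.

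\emph{Maximum principle.} Assume $f_0\leq C$ $\mm$-a.e.; the lower bound is symmetric. For each $n\in\N$, the plan is to choose a non-negative convex function $\eta_n\in C^2(\R)$ with $\eta_n\equiv 0$ on $(-\infty,C]$, $\eta_n>0$ on $(C+\tfrac1n,\infty)$, and with $\eta_n'\in C^1(\R)$ having bounded derivative. Then $\phi:=\eta_n'$ is admissible in \eqref{eq:delta2}, and a standard Hilbert-space chain rule for absolutely continuous curves yields
\[
\frac{\d}{\d t}\int\eta_n(f_t)\,\d\mm=\int\eta_n'(f_t)\Delta f_t\,\d\mm=-\int\relgrad{f_t}^2\,\eta_n''(f_t)\,\d\mm\leq 0,
\]
since $\eta_n''\geq 0$. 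Because $\int\eta_n(f_0)\,\d\mm=0$, this forces $\eta_n(f_t)\equiv 0$ $\mm$-a.e., hence $f_t\leq C+\tfrac1n$ $\mm$-a.e.; letting $n\to\infty$ concludes.

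\emph{Entropy dissipation.} Once the maximum principle is available, $c\leq f_t\leq C$ for every $t\geq 0$. I would pick $\Phi,\psi\in C^1(\R)$ with bounded derivatives that coincide with $r\log r$ and $\log r$ on $[c,C]$, and differentiate $\int\Phi(f_t)\,\d\mm=\int f_t\log f_t\,\d\mm$:
\[
\frac{\d}{\d t}\int f_t\log f_t\,\d\mm=\int(1+\psi(f_t))\Delta f_t\,\d\mm=-\int\relgrad{f_t}^2\psi'(f_t)\,\d\mm=-\int\frac{\relgrad{f_t}^2}{f_t}\,\d\mm,
\]
where the constant term vanishes by mass preservation (equivalently \eqref{eq:delta1} with $g\equiv 1$) and the second equality is \eqref{eq:delta2} applied with $\phi=\psi$. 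Absolute continuity on $[0,\infty)$ follows from $L^2$-continuity of $(f_t)$ combined with the uniform bound $c\leq f_t\leq C$ and dominated convergence.

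The main obstacle is the legitimation of the chain rule $\frac{\d}{\d t}\int\eta(f_t)\,\d\mm=\int\eta'(f_t)\Delta f_t\,\d\mm$ along the locally absolutely continuous curve $(f_t)\subset L^2(X,\mm)$. This is standard once $\eta'$ is bounded on $\R$: the map $g\mapsto\int\eta(g)\,\d\mm$ is then Fr\'echet-differentiable on $L^2(X,\mm)$ with differential $\eta'(g)$, so the desired identity is the composition of an absolutely continuous $L^2$-valued curve with a $C^1$ functional. Once this is justified, the three claims follow directly from the two integration-by-parts identities above.
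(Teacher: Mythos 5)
Your mass-preservation and entropy-dissipation arguments are essentially the paper's: the first is \eqref{eq:delta1} with $g\equiv 1$, the second differentiates $t\mapsto\int \Phi(f_t)\,\d\mm$ and applies \eqref{eq:delta2} to a $C^1$ extension of $\log$, exactly as in the text. Where you genuinely diverge is the maximum principle. The paper never differentiates a functional along the flow: it works at the level of the implicit Euler (minimizing movement) scheme, observing that if $f\le C$ then the truncation $g:=\min\{f^\tau,C\}$ satisfies $\C(g)\le\C(f^\tau)$ (by the chain rule/locality of Proposition~\ref{prop:chain}) while being strictly closer to $f$ in $L^2$ unless $f^\tau\le C$ $\mm$-a.e., and then transfers the bound to the flow via convergence of the scheme. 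Your route is a Lyapunov argument run directly on the continuous flow, differentiating $t\mapsto\int\eta_n(f_t)\,\d\mm$ and using \eqref{eq:delta2} with $\phi=\eta_n'$ and $\eta_n''\ge 0$; this is correct and more self-contained (it only uses $\tfrac{\d}{\d t}f_t=\Delta f_t$ a.e.\ and $f_t\in D(\Delta)$ for $t>0$, not the convergence of the discrete scheme), at the price of the smoothing parameter $1/n$ (the natural choice $(r-C)_+^2$ is not admissible in \eqref{eq:delta2} since its derivative is not $C^1$) and of justifying the Hilbertian chain rule. On that last point there is one inaccuracy to repair: with your construction $\eta_n'$ is \emph{not} bounded on $\R$ (it grows linearly at $+\infty$), so the sufficient condition you quote does not literally apply; what you do have, and what suffices, is that $\eta_n'$ is Lipschitz, whence $|\eta_n(a)-\eta_n(b)-\eta_n'(b)(a-b)|\le\tfrac M2|a-b|^2$ and $|\eta_n'(g)|\le M(g-C)^+\in L^2(X,\mm)$, so $g\mapsto\int\eta_n(g)\,\d\mm$ is Fr\'echet differentiable on $L^2(X,\mm)$ with gradient $\eta_n'(g)$ and composes with the locally Lipschitz curve $(f_t)$ as you claim (alternatively, take $\eta_n''$ compactly supported, making $\eta_n'$ bounded). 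A similar small touch-up: dominated convergence gives only continuity of the entropy at $t=0$; absolute continuity on all of $[0,\infty)$ follows because, once $f_t\ge c$, the a.e.\ derivative is dominated by $\tfrac2c\,\C(f_t)$, which is locally integrable up to $t=0$ (e.g.\ from $\int_0^T\C(f_t)\,\d t\le\tfrac12\|f_0-g\|_{L^2}^2+T\C(g)$) — though the paper is equally terse on this point.
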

\begin{proof} \\*\underline{Mass preservation.} Just notice that from  \eqref{eq:delta1} we get
\[
\left|\frac{\d}{\d t}\intX f_t\,\d\mm\right|=\left|\intX
\mathbf{1}\cdot\Delta f_t\,\d\mm\right|\leq\intX\relgrad{\mathbf
1}\relgrad {f_t}\,\d\mm=0,\qquad \text{for a.e.~$t\in (0,\infty)$,}
\]
where $\mathbf 1$ is the function identically equal to 1, which has
minimal relaxed gradient equal to 0.\\* \underline{Maximum
principle.} Fix $f\in L^2(X,\mm)$, $\tau>0$ and, according to the
implicit Euler scheme, let $f^\tau$ be the unique minimizer of
\[
g\qquad\mapsto\qquad \C(g)+\frac{1}{2\tau}\int_X|g-f|^2\,\d\mm.
\]
Assume that $f\leq C$. We claim that in this case $f^\tau\leq C$ as
well. Indeed, if this is not the case we can consider the competitor
$g:=\min\{f^\tau,C\}$ in the above minimization problem. By $(a)$ of
Proposition~\ref{prop:chain} we get $\C(g)\leq\C(f^\tau)$ and the
$L^2$ distance of $f$ and $g$ is strictly smaller than the one of
$f$ and $f^\tau$ as soon as $\mm(\{f^\tau>C\})>0$, which is a
contradiction.

Starting from $f_0$, iterating this
procedure, and using the fact that the implicit Euler scheme
converges as $\tau\downarrow 0$ (see \cite{Brezis73},
\cite{Ambrosio-Gigli-Savare08} for details) to the gradient flow we
get the conclusion.\\
The same arguments applies to uniform bounds from below.\\*
\underline{Entropy dissipation.} The map $z\mapsto z\log z$ is
Lipschitz on $[c,C]$ which, together with the maximum principle and
the fact that $t\mapsto f_t\in L^2(X,\mm)$ is locally absolutely
continuous, yields the claimed absolute continuity statement. Now
notice that we have $\frac{\d}{\d t}\int f_t\log f_t\,\d\mm=\int
(\log f_t+1)\Delta f_t\,\d\mm$ for a.e.~$t$. Since by the maximum
principle $f_t\geq c$ $\mm$-a.e., the function $\log z+1$ is
Lipschitz and $C^1$ on the image of $f_t$ for any $t\geq 0$, thus
from \eqref{eq:delta2} we get the conclusion.
\end{proof}

\subsection{The ``horizontal'' approach: weak upper gradients}

In this subsection, following the approach of
\cite{Ambrosio-Gigli-Savare11,Ambrosio-Gigli-Savare11tris},
we introduce a different notion of ``weak norm of
gradient'' in a compact and normalized metric measure space
$(X,\sfd,\mm)$.
This notion of gradient is Lagrangian in spirit, 
it does not require a relaxation procedure, it
will provide a new estimate of entropy dissipation along the
gradient flow of $\C$, and it will also be useful in the analysis of
the derivative of the entropy along Wasserstein geodesics.

While the definition of minimal relaxed slope was taken from
Cheeger's work \cite{Cheeger00}, the notion we are going to
introduce is inspired by the work of 
Koskela-MacManus
\cite{Koskela-MacManus98} and Shanmugalingam
\cite{Shanmugalingam00}, 
the only difference being that we consider
a different notion of null set of curves.

\subsubsection{Negligible sets of curves and functions Sobolev along a.e.~curve}

Recall that the evaluation maps $\rme_t:C([0,1],X)\to X$ are
defined by $\rme_t(\gamma):=\gamma_t$. We also introduce the
restriction maps ${\rm restr}_t^s: C([0,1],X)\to C([0,1],X)$,
$0\le t\le s\le 1$, given by
\begin{equation}
{\rm restr}_t^s(\gamma)_r:=\gamma_{((1-r)t+rs)},\label{eq:93}
\end{equation}
so that ${\rm restr}_t^s$ restricts the curve $\gamma$ to the
interval $[t,s]$ and then ``stretches'' it on the whole of $[0,1]$.

\begin{definition}[Test plans and negligible sets of curves]\label{def:testplans}
We say that a probability measure $\ppi\in\prob{C([0,1],X)}$ is a
\emph{test plan} if 
it is concentrated on $AC^2([0,1];X)$,
$\iint_0^1|\dot\gamma_t|^2\d t\,\d\ppi<\infty$,
and there exists a constant $C(\ppi)$ such that

\begin{equation}
(\e_t)_\sharp \ppi \leq C(\ppi)\mm\qquad\forevery t\in[0,1].\label{eq:9}
\end{equation}
A Borel set $A\subset AC^2([0,1],X)$ is said \emph{negligible} if
$\ppi(A)=0$ for any test plan $\ppi$. A property which holds for
every $\gamma\in AC^2([0,1],X)$, except possibly a negligible set,
is said to hold for almost every curve.
\end{definition}
\begin{remark}
  \label{re:easy}
  \upshape
  An easy consequence of condition \eqref{eq:9} is that if two
  $\mm$-measurable functions $f,\,g:X\to\R$ coincide up to a
  $\mm$-negligible set and $\mathcal T$ is an at most countable subset
  of $[0,1]$, then the functions
  $f\circ \gamma$ and $g\circ \gamma$ coincide in $\mathcal T$ 
  for almost every curve
  $\gamma$. 

  Moreover, choosing an arbitrary test plan $\ppi$ and applying Fubini's
  Theorem to the product measure $\Leb 1\times \ppi$
  in $(0,1)\times C([0,1];X)$ we also obtain that
  $f\circ\gamma=g\circ\gamma$ $\Leb 1$-a.e.\ in $(0,1)$ for
  $\ppi$-a.e.\ curve $\gamma$; since $\ppi$ is arbitrary, the same
  property holds for almost every curve.
\end{remark}
Coupled with the definition of
negligible set of curves, there are the definitions of weak upper gradient and
 of functions which are Sobolev along a.e.\ curve.
\begin{definition}[Weak upper gradients]
A Borel function
$g:X\to[0,\infty]$ is a weak upper gradient of $f:X\to \R$ if
\begin{equation}
\label{eq:inweak} \left|\int_{\partial\gamma}f\right|\leq
\int_\gamma g\qquad\text{for a.e. $\gamma$.}
\end{equation}
\end{definition}
\begin{definition}[Sobolev functions along a.e. curve]
 A function $f:X\to\R$ is Sobolev along a.e.~curve if for
 a.e.~curve $\gamma$ the function $f\circ\gamma$ coincides a.e.~in $[0,1]$ and in $\{0,1\}$ with an absolutely continuous map
 $f_\gamma:[0,1]\to\R$.
\end{definition}
By Remark \ref{re:easy} applied to $\mathcal T:=\{0,1\}$, \eqref{eq:inweak} does not depend on
the particular representative of $f$ in the class of $\mm$-measurable
function coinciding with $f$ up to a $\mm$-negligible set. 
The same Remark also shows that  the property of being Sobolev along
almost every curve $\gamma$ is independent of the representative in the
class of $\mm$-measurable functions coinciding with $f$ $\mm$-a.e.\
in $X$.

In the following remarks we will make use of this basic calculus lemma:
\begin{lemma}\label{lem:Fibonacci}
{
Let $f:(0,1)\to\R$ Lebesgue measurable,
$q\in [1,\infty {\color{black}]}$, $g\in L^q(0,1)$ nonnegative be satisfying
$$
|f(s)-f(t)|\leq\bigl|\int_s^t g(r)\,\d r\bigr|\qquad\text{for $\Leb{2}$-a.e. $(s,t)\in (0,1)^2$.}
$$
Then $f\in W^{1,q}(0,1)$ and $|f'|\leq g$ a.e. in $(0,1)$.}
\end{lemma}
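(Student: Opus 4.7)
The plan is to produce an absolutely continuous representative $\tilde f$ of $f$ via a monotonicity decomposition, and then differentiate to obtain the bound $|\tilde f'|\leq g$. Set $h(t):=\int_0^t g(r)\,\d r$, which is absolutely continuous on $(0,1)$. Splitting the hypothesis according to the sign of $t-s$, one gets, for $\Leb{2}$-a.e.\ pair $(s,t)$ with $s<t$, both inequalities $f(t)-f(s)\leq h(t)-h(s)$ and $f(s)-f(t)\leq h(t)-h(s)$. Equivalently, $\phi:=f-h$ is \emph{essentially non-increasing} (meaning $\phi(t)\leq\phi(s)$ for $\Leb{2}$-a.e.\ $(s,t)$ with $s<t$) and $\psi:=f+h$ is \emph{essentially non-decreasing}.

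First I would invoke the standard measure-theoretic fact that a measurable function on an interval which is essentially monotone admits a genuinely monotone representative, obtained for instance as an essential right limit. Let $\bar\phi,\bar\psi$ be such monotone representatives, non-increasing and non-decreasing respectively. Then $\tilde f:=\tfrac12(\bar\phi+\bar\psi)$ agrees with $f$ $\Leb{1}$-a.e. Moreover the identity $\bar\psi-\bar\phi=2h$ holds $\Leb{1}$-a.e., and hence everywhere in $(0,1)$ after adjusting at countably many points, since $h$ is continuous while $\bar\phi,\bar\psi$ each have at most countably many discontinuities.

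The key step, which I expect to be the main obstacle, is to rule out singular parts in the distributional derivatives of $\bar\phi,\bar\psi$. Writing the Lebesgue decompositions $\d\bar\psi=\bar\psi'\,\d t+\d\bar\psi^s$ and $\d\bar\phi=\bar\phi'\,\d t+\d\bar\phi^s$, monotonicity yields $\d\bar\psi^s\geq 0$ and $\d\bar\phi^s\leq 0$, while differentiating the identity $\bar\psi-\bar\phi=2h$ as measures gives $\d\bar\psi^s=\d\bar\phi^s$; the only signed measure which is both non-negative and non-positive is zero, so both singular parts vanish. Consequently $\bar\phi,\bar\psi$, and therefore $\tilde f$, are absolutely continuous on $(0,1)$.

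Finally, from $\bar\psi'-\bar\phi'=2g$ together with the sign information $\bar\psi'\geq 0\geq\bar\phi'$ $\Leb{1}$-a.e., one obtains $\bar\psi'\in[0,2g]$ and $\bar\phi'\in[-2g,0]$, whence
\[
|\tilde f'|=\bigl|\tfrac12(\bar\phi'+\bar\psi')\bigr|\leq g\qquad\text{$\Leb 1$-a.e.\ in }(0,1).
\]
Since $g\in L^q(0,1)$, this gives $\tilde f\in W^{1,q}(0,1)$, and as $\tilde f=f$ a.e.\ the same conclusion holds for $f$. The bulk of the work lies in the monotonicity extraction and the cancellation of the singular parts; everything else is sign bookkeeping.
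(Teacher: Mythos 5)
Your proof is correct, but it follows a genuinely different route from the paper's. The paper argues directly at the level of distributions: by Fubini it selects increments $h_i\downarrow 0$ for which the hypothesis holds at a.e.\ $t$, writes the corresponding difference quotients against a test function $\phi\in C^1_c(0,1)$, and passes to the limit to get $\bigl|\int_0^1 f\,\phi'\,\d t\bigr|\le\int_0^1 g\,|\phi|\,\d t$; this shows the distributional derivative is a measure $\eta$ with $|\eta|\le g\,\Leb1$, hence absolutely continuous, which gives $W^{1,1}$ regularity and $|f'|\le g$ at once, the case $q>1$ following from the pointwise bound. You instead exploit the nonnegativity of $g$ through its primitive $h$: the hypothesis says exactly that $f-h$ is essentially non-increasing and $f+h$ essentially non-decreasing, and after passing to monotone representatives you cancel the singular parts of their derivatives against each other using $\bar\psi-\bar\phi=2h$ with $h$ absolutely continuous. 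Both arguments are sound; the paper's is shorter, needs no monotone-representative lemma, and does not use the sign of $g$, while yours is more classical in flavour (Jordan-type decomposition plus Lebesgue decomposition of monotone functions) and produces an explicit absolutely continuous pointwise representative. Two small points to tidy up in your version: the ``standard fact'' that an essentially monotone function has a monotone representative deserves its two-line Fubini proof (for a.e.\ $s$ the inequality holds for a.e.\ $t>s$ and vice versa, so the restriction to a full-measure set is genuinely monotone and can be extended monotonically); and the adjustment of $\bar\phi,\bar\psi$ at countably many points is superfluous, since the distributional identity $\d\bar\psi-\d\bar\phi=2g\,\d t$ only uses the $\Leb1$-a.e.\ equality $\bar\psi-\bar\phi=2h$.
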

\begin{proof} 
It is immediate to check that $f\in L^\infty(0,1)$.
Let $N\subset (0,1)^2$ be the $\Leb{2}$-negligible subset where the above 
inequality fails. By Fubini's theorem, also the set $\{(t,h)\in (0,1)^2:\ (t,t+h)\in N\cap (0,1)^2\}$
is $\Leb{2}$-negligible. In particular, by Fubini's theorem, for a.e. $h$ we have
$(t,t+h)\notin N$ for a.e. $t\in (0,1)$. Let $h_i\downarrow 0$ with this property and use
the identities
$$
\int_0^1f(t)\frac{\phi(t+h)-\phi(t)}{h}\,\d t=-\int_0^1\frac{f(t-h)-f(t)}{-h}\phi(t)\,\d t
$$
with $\phi\in C^1_c(0,1)$ and $h=h_i$ sufficiently small to get
$$
\biggl|\int_0^1f(t)\phi'(t)\,\d t\biggr|\leq\int_0^1g(t)|\phi(t)|\,\d t.
$$
It follows that the distributional derivative of $f$ is a signed
measure $\eta$ with finite total variation which satisfies
\begin{displaymath}
  -\int_0^1f\,\phi'\,\d t=\int_0^1 \phi\,\d\eta,\qquad
  \Bigl|\int_0^1 \phi\,\d\eta\Bigr|\le \int_0^1g\,|\phi|\,\d
t\quad\text{for every }\phi\in C^1_c(0,1);
\end{displaymath}
therefore $\eta$ is absolutely continuous with respect to the Lebesgue
measure with $|\eta|\le g\Leb 1$. 
This gives the $W^{1,1}(0,1)$ regularity and, at the same time, the inequality
$|f'|\leq g$ a.e. in $(0,1)$. The case $q>1$ immediately follows
by applying this inequality when $g\in L^q(0,1)$.
\end{proof}
With the aid of this lemma, we can  prove that the existence of a weak
upper gradient $g$ such that $\int_\gamma g<\infty$ for
a.e.\ $\gamma$ (in particular if $g\in L^2(X,\mm)$) implies Sobolev regularity along a.e.\ curve. 
\begin{remark}[Restriction and equivalent formulation]
  \label{re:restr}{\rm
{Notice that if $\ppi$ is a test plan, so is $({\rm
restr}_t^s)_\sharp\ppi$. Hence if $g$ is a weak upper gradient
of $f$ such that $\int_\gamma g<\infty$ for
a.e.\ $\gamma$, then for every $t<s$ in $[0,1]$ it holds
    \[
    |f(\gamma_s)-f(\gamma_t)|\leq \int_t^s g(\gamma_r)|\dot\gamma_r|\,\d
    r \qquad\text{for a.e. $\gamma$.}
    \]
    Let $\ppi$ be a test plan: by Fubini's theorem applied
    to the product measure $\Leb2\times\ppi$ in $(0,1)^2\times
    C([0,1];X)$, it follows that for $\ppi$-a.e. $\gamma$ the function
     $f$ satisfies
    \[
    |f(\gamma_s)-f(\gamma_t)|\leq \Bigl|\int_t^s g(\gamma_r)|\dot\gamma_r|\,\d
    r \Bigr|\qquad\text{for $\Leb{2}$-a.e. $(t,s)\in (0,1)^2$.}
    \]
    An analogous argument shows that 
    \begin{equation}
      \label{eq:2}
      \left\{
    \begin{aligned}
      \textstyle |f(\gamma_s)-f(\gamma_0)|&\textstyle 
      \leq \int_0^s
      g(\gamma_r)|\dot\gamma_r|\,\d r\\
      \textstyle |f(\gamma_1)-f(\gamma_s)|&\textstyle \leq \int_s^1
      g(\gamma_r)|\dot\gamma_r|\,\d r
    \end{aligned}\right.
    \qquad\text{for $\Leb{1}$-a.e. $s\in (0,1)$.}
\end{equation}
 Since $g\circ \gamma|\dot \gamma|\in L^1(0,1)$ for
    $\ppi$-a.e.\ $\gamma$,  
    by Lemma~\ref{lem:Fibonacci} it follows that $f\circ\gamma\in W^{1,1}(0,1)$
    for $\ppi$-a.e. $\gamma$, and
    \begin{equation}\label{eq:pointwisewug}
      \biggl|\frac{\d}{\dt}(f\circ\gamma)\biggr|\leq
      g\circ\gamma|\dot\gamma|\quad\text{a.e. in $(0,1)$, for
        $\ppi$-a.e. $\gamma$.}
    \end{equation}}%
  Since $\ppi$ is arbitrary, we conclude that $f\circ\gamma\in
  W^{1,1}(0,1)$ for a.e.\ $\gamma$, and therefore it admits an
  absolutely continuous representative $f_\gamma$; moreover,
  by \eqref{eq:2}, it is immediate to check that $f(\gamma(t))=f_\gamma(t)$ for $t\in \{0,1\}$ and a.e.\ $\gamma$.
    \fr   }
\end{remark}
\begin{remark}[An approach with a non explicit use of negligible set of curves]{\rm
The previous remark could be used to introduce the notion of weak upper gradients without speaking (explicitly) of Borel sets at all. One can simply say that $g\in L^2(X,\mm)$ is a weak upper gradient of  $f:X\to\R$  provided it holds
\[
\int|f(\gamma_1)-f(\gamma_0)|\,\d\ppi(\gamma)\leq \iint_0^1g(\gamma_s)|\dot\gamma_s|\,\d s\,\d\ppi(\gamma).
\]
(this has been the approach followed in \cite{Gigli12bis}).
\fr}\end{remark}

\begin{proposition}[Locality]\label{prop:locweak}
Let $f:X\to\R$ be Sobolev along almost all absolutely continuous
curves, and let $G_1,\,G_2$ be weak upper gradients of $f$. Then
$\min\{G_1,G_2\}$ is a weak upper gradient of $f$.
\end{proposition}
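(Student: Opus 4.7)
The plan is to reduce the proof to the pointwise inequality on the derivative of $f\circ\gamma$ along a.e.\ curve, using Remark~\ref{re:restr}. Once each $G_i$ bounds $|(f_\gamma)'|$ pointwise almost everywhere, so does their minimum, and integrating gives the weak upper gradient inequality for $\min\{G_1,G_2\}$.

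First, I fix an arbitrary test plan $\ppi$ and aim to show the weak upper gradient inequality for $\min\{G_1,G_2\}$ holds on $\ppi$-a.e.\ curve; since $\ppi$ is arbitrary this will finish the proof. By the Sobolev-along-a.e.-curve hypothesis, for $\ppi$-a.e.\ $\gamma$ the composition $f\circ\gamma$ admits an absolutely continuous representative $f_\gamma:[0,1]\to\R$ agreeing with $f\circ\gamma$ at $\{0,1\}$ (and a.e.\ in $(0,1)$, by Remark~\ref{re:easy}).

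Second, I apply the restriction argument of Remark~\ref{re:restr} separately to $G_1$ and to $G_2$. Since the restriction of a test plan is a test plan, the weak upper gradient property of $G_i$ together with Lemma~\ref{lem:Fibonacci} yields, for $\ppi$-a.e.\ $\gamma$, the pointwise bound
\[
\Bigl|\frac{\d}{\dt}f_\gamma\Bigr|(s)\leq G_i(\gamma_s)\,|\dot\gamma_s|\qquad\text{for $\Leb 1$-a.e. $s\in(0,1)$,}
\]
for each $i=1,2$. Intersecting the two $\ppi$-full-measure sets, this bound holds simultaneously for $i=1$ and $i=2$ on $\ppi$-a.e.\ curve. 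Taking the pointwise minimum in $i$ gives
\[
\Bigl|\frac{\d}{\dt}f_\gamma\Bigr|(s)\leq \min\{G_1,G_2\}(\gamma_s)\,|\dot\gamma_s|\qquad\text{a.e.\ in $(0,1)$, for $\ppi$-a.e.\ $\gamma$.}
\]
Integrating between $0$ and $1$ and using absolute continuity of $f_\gamma$,
\[
|f(\gamma_1)-f(\gamma_0)|=|f_\gamma(1)-f_\gamma(0)|\leq \int_0^1\min\{G_1,G_2\}(\gamma_s)\,|\dot\gamma_s|\,\d s=\int_\gamma\min\{G_1,G_2\},
\]
which is the desired inequality.

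The only delicate point is the second step: one must be sure that the restriction/Fubini argument of Remark~\ref{re:restr} produces a single $\ppi$-null exceptional set which works for both $G_1$ and $G_2$. This is handled by simply taking the union of the two $\ppi$-null sets coming from applying the Remark to each $G_i$. On curves where $\int_\gamma G_i=\infty$ the integrated bound is trivial, but no issue arises because the pointwise bound extracted from the restriction procedure is stated $\Leb 1$-a.e., and $\min\{G_1,G_2\}$ will then simply inherit the finite bound from whichever $G_i$ controls $|f_\gamma'|$ pointwise. No other step presents any real obstacle.
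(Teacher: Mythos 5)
Your proof is correct and is essentially the paper's own argument: the proof in the paper is the one-liner ``direct consequence of \eqref{eq:pointwisewug}'', i.e.\ precisely the pointwise bound $|(f\circ\gamma)'|\le G_i\circ\gamma\,|\dot\gamma|$ a.e.\ from Remark~\ref{re:restr} applied to each $G_i$, minimized pointwise and then integrated along the curve, exactly as you do. The only loose end---that Remark~\ref{re:restr} extracts this pointwise bound under the proviso $\int_\gamma G_i<\infty$ for a.e.\ curve, which your last paragraph glosses over somewhat informally---is equally present in the paper's own treatment, so your argument matches it in both substance and level of rigor.
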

\begin{proof} It is a direct consequence of \eqref{eq:pointwisewug}.
\end{proof}
\begin{definition}[Minimal weak upper gradient]
  Let $f:X\to\R$ be Sobolev along almost all curves.
  The minimal weak upper gradient $\weakgrad f$ of $f$
  is the weak upper gradient characterized, up to
$\mm$-negligible sets, by the property
\begin{equation}\label{eq:defweakgrad}
  \weakgrad f\leq G\qquad\text{$\mm$-a.e.~in $X$, for every weak upper
    gradient $G$ of $f$.}
\end{equation}
\end{definition}
Uniqueness of the minimal weak upper gradient is obvious. For
existence, we take $\weakgrad f:=\inf_n G_n$, where $G_n$ are weak
upper gradients which provide a minimizing sequence in
$$
\inf\left\{\int_X {\rm tan}^{-1}G\,\d\mm:\ \text{$G$ is a weak upper
gradient of $f$}\right\}.
$$
We immediately see, thanks to Proposition~\ref{prop:locweak}, that
we can assume with no loss of generality that $G_{n+1}\leq G_n$.
Hence, by monotone convergence, the function $\weakgrad f$ is a
weak upper gradient of $f$ and $\int_X {\rm
tan}^{-1}G\,\d\mm$ is minimal at $G=\weakgrad f$. This
minimality, in conjunction with Proposition~\ref{prop:locweak},
gives \eqref{eq:defweakgrad}.

\begin{theorem}[Stability w.r.t. $\mm$-a.e.~convergence]\label{thm:stabweak}
Assume that $f_n$ are $\mm$-measurable, Sobolev along almost
all curves and that $G_n$ are weak upper gradients of $f_n$.
Assume furthermore that $f_n(x)\to f(x)\in\R$ for $\mm$-a.e.~$x\in
X$ and that $(G_n)$ weakly converges to $G$ in $L^2(X,\mm)$. Then $G$ is a weak upper gradient
of $f$.
\end{theorem}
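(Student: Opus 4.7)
My plan is to fix an arbitrary test plan $\ppi$ and prove that $|f(\gamma_1)-f(\gamma_0)|\le \int_\gamma G$ for $\ppi$-a.e.\ $\gamma$; the obstruction is that weak $L^2$-convergence of $G_n$ to $G$ does not directly let one pass to the limit in line integrals along typical curves, so the first step will be to upgrade to strong convergence of suitable convex combinations.

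By Mazur's lemma I extract finite convex combinations $\widetilde G_n:=\sum_{k=n}^{N_n}\alpha_{n,k} G_k$ converging \emph{strongly} to $G$ in $L^2(X,\mm)$, and set $\widetilde f_n:=\sum_{k=n}^{N_n}\alpha_{n,k} f_k$ with the same coefficients. Since the weak upper gradient inequality is linear in the pair (function, gradient), each $\widetilde G_n$ is a weak upper gradient of $\widetilde f_n$; and because the combinations are finite, one also has $\widetilde f_n\to f$ at $\mm$-a.e.\ point (for each $x$ where $f_k(x)\to f(x)$, all summands are eventually $\eps$-close to $f(x)$). Since $\ppi$ is a test plan, $(\e_0)_\sharp\ppi$ and $(\e_1)_\sharp\ppi$ are both bounded by $C(\ppi)\mm$, so this $\mm$-a.e.\ convergence transfers to $\widetilde f_n(\gamma_i)\to f(\gamma_i)$ for $i=0,1$ and $\ppi$-a.e.\ $\gamma$.

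For the line integrals, Cauchy--Schwarz, Fubini, and once more the density bound $(\e_t)_\sharp\ppi\le C(\ppi)\mm$ give
\[
\int\!\Bigl(\int_0^1 |\widetilde G_n-G|(\gamma_t)\,|\dot\gamma_t|\,\d t\Bigr)\d\ppi(\gamma)
\le C(\ppi)^{1/2}\|\widetilde G_n-G\|_{L^2(X,\mm)}\Bigl(\int\!\int_0^1|\dot\gamma_t|^2\,\d t\,\d\ppi\Bigr)^{1/2},
\]
which vanishes as $n\to\infty$ by strong $L^2$-convergence and by the defining integrability of a test plan. Along a subsequence, therefore, $\int_\gamma\widetilde G_n\to\int_\gamma G$ for $\ppi$-a.e.\ $\gamma$; on the $\ppi$-full measure set where this and the endpoint convergence both hold, passing to the limit in $|\widetilde f_n(\gamma_1)-\widetilde f_n(\gamma_0)|\le\int_\gamma\widetilde G_n$ yields the desired inequality for $f$ and $G$. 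Arbitrariness of $\ppi$ shows that $G$ is a weak upper gradient of $f$, and Sobolev regularity along a.e.\ curve is then automatic since $G\in L^2(X,\mm)$ (Remark~\ref{re:restr}).

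The main obstacle lies exactly in the displayed estimate: weak convergence is the wrong topology for testing against measures supported on curves, so Mazur's lemma is essential to produce the strong-$L^2$ surrogate $\widetilde G_n$ of $G$, while the density bound $(\e_t)_\sharp\ppi\le C(\ppi)\mm$ is what transforms $L^2$-closeness on $X$ into control of the line integrals under $\ppi$; these two ingredients are the exact match between the notion of ``test plan'' (and its induced negligible sets of curves) and the notion of ``weak upper gradient'' adopted here.
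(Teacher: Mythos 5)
Your proof is correct and follows essentially the same route as the paper's: Mazur's lemma to produce strongly convergent convex combinations (with the corresponding combinations of the $f_n$), the estimate $\int\bigl(\int_\gamma\varphi\bigr)\,\d\ppi\le \sqrt{C(\ppi)}\,\|\varphi\|_{L^2(X,\smm)}\bigl(\iint_0^1|\dot\gamma_t|^2\,\d t\,\d\ppi\bigr)^{1/2}$ to convert $L^2$-convergence into $\ppi$-a.e.\ convergence of curve integrals along a subsequence, and the bound on the marginals $(\e_i)_\sharp\ppi\le C(\ppi)\mm$ for the endpoint convergence. The only (harmless) difference is the endgame: the paper passes to the limit in the equi-absolutely continuous representatives $(\tilde f_{n(k)})_\gamma$ to obtain Sobolev regularity along a.e.\ curve and the gradient bound simultaneously, while you pass to the limit directly in the endpoint inequality and recover the Sobolev regularity afterwards from Remark~\ref{re:restr}, which is equally valid.
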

\begin{proof}
Fix a test plan $\ppi$.
By Mazur's theorem we can find convex combinations
$$
H_n:=\sum_{i=N_h+1}^{N_{h+1}}\alpha_iG_i\qquad\text{with
$\alpha_i\geq 0$, $\sum_{i=N_h+1}^{N_{h+1}}\alpha_i=1$,
$N_h\to\infty$}
$$
converging strongly to $G$ in $L^2(X,\mm)$. Denoting by $\tilde f_n$
the corresponding convex combinations of $f_n$, $H_n$ are weak upper
gradients of $\tilde f_n$ and still $\tilde f_n\to f$ $\mm$-a.e.~in
$X$.

Since for every nonnegative Borel function $\varphi:X\to [0,\infty]$ it holds (with $C=C(\ppi)$)
\begin{align}
  \notag\int\Big(\int_{\gamma}\varphi\Big)\,\d\ppi&=
  \int\Big(\int_0^1 \varphi(\gamma_t)|\dot
  \gamma_t|\,\d t\Big)\,\d\ppi
  \le
  \int\Big(\int_0^1\varphi^2(\gamma_t)\,\d
  t\Big)^{1/2}
  \Big(\int_0^1 |\dot
  \gamma_t|^2\,\d t\Big)^{1/2}\,\d\ppi
  \\&\notag
  \le
  \Big(\int_0^1 \int\varphi^2\,\d(\rme_t)_\sharp\ppi\,\d
  t\Big)^{1/2}
  \Big(\iint_0^1|\dot\gamma_t|^2\,\d t\,\d\ppi\Big)^{1/2}
\\ &\le  \Big(C\int\varphi^2\,\d\mm\Big)^{1/2}  \Big(\iint_0^1|\dot\gamma_t|^2\,\d t\,\d\ppi\Big)^{1/2},
\label{eq:21}
 \end{align}
we obtain, for $\bar C:=\sqrt{C} \Big(\iint_0^1|\dot\gamma_t|^2\,\d
t\,\d\ppi\Big)^{1/2}$,
\begin{align*}
\int&\biggl(\int_{\gamma}|H_n-G|+\min\{|\tilde{f}_n-f|,1\}\biggr)\,\d\ppi\leq
\bar C\Big(\|H_n-G\|_{L^2}+
\|\min\{|\tilde{f}_n-f|,1\}\|_{L^2}\Big) \to 0.
\end{align*}
By a diagonal argument we can find a subsequence $n(k)$  such that
$\int_\gamma|H_{n(k)}-G|+\min\{|\tilde{f}_{n(k)}-f|,1\}\to 0$ as
$k\to\infty$ for $\ppi$-a.e.~$\gamma$. Since
$\tilde{f}_n$ converge $\mm$-a.e.~to $f$ and the marginals of $\ppi$
are absolutely continuous w.r.t. $\mm$ we have also that for
$\ppi$-a.e.~$\gamma$ it holds $\tilde{f}_n(\gamma_0)\to f(\gamma_0)$
and $\tilde{f}_n(\gamma_1)\to f(\gamma_1)$.

If we fix a curve $\gamma$ satisfying these convergence properties,
since $(\tilde{f}_{n(k)})_\gamma$ are equi-absolutely continuous
(being their derivatives bounded by
$H_{n(k)}\circ\gamma|\dot\gamma|$) and a further subsequence of
$\tilde{f}_{n(k)}$ converges a.e.~in $[0,1]$ and in $\{0,1\}$ to
$f(\gamma_s)$, we can pass to the limit to obtain an absolutely
continuous function $f_\gamma$ equal to $f(\gamma_s)$ a.e.~in
$[0,1]$ and in $\{0,1\}$ with derivative bounded by
$G(\gamma_s)|\dot\gamma_s|$. Since $\ppi$ is arbitrary
we conclude that $f$ is Sobolev along almost all curves and
that $G$ is a weak upper gradient of $f$.
\end{proof}

\begin{remark}[$\weakgrad f\leq\relgrad f$]\label{re:weakrel}{\rm
An immediate consequence of the previous proposition is that
any $f\in D(\C)$ is Sobolev along a.e.~curve and satisfies
$\weakgrad f\leq\relgrad f$. Indeed, for such $f$ just pick a
sequence of Lipschitz functions converging to $f$ in $L^2(X,\mm)$ such that $|D f_n|\to\relgrad
f$ in $L^2(X,\mm)$ (as in Proposition~\ref{le:strongappr}) and
recall that for Lipschitz functions the local Lipschitz constant is
an upper gradient.}\fr
\end{remark}

\subsubsection{A bound from below on weak gradients}
In this short subsection we show how, using test plans and the very
definition of minimal weak gradients,  it is possible to use
$\weakgrad f$ to bound from below the increments of the relative
entropy. We start with the following result, proved - in a more
general setting - by Lisini in \cite{Lisini07}: it shows how to
associate to a curve 
$\mu\in AC^2([0,1];(\prob X,W_2))$ 
a plan $\ppi\in\prob{C([0,1],X)}$ 
concentrated on $AC^2([0,1];X)$ representing the curve itself (see
also Theorem 8.2.1 of \cite{Ambrosio-Gigli-Savare08} for the
Euclidean case). We will only sketch the proof.
\begin{proposition}[Superposition principle]\label{prop:lisini}
Let $(X,\sfd)$ be a compact space and let 
$\mu\in AC^2([0,1];(\prob
X,W_2))$.
Then there exists
$\ppi\in\prob{C([0,1],X)}$ concentrated on $AC^2([0,1];X)$ such that $(\e_t)_\sharp\ppi=\mu_t$ for
any $t\in[0,1]$ and
$\int|\dot\gamma_t|^2\,\d\ppi(\gamma)=|\dot\mu_t|^2$ for a.e.
$t\in[0,1]$.
\end{proposition}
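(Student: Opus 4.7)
The plan is to realize $\ppi$ as a weak limit of discrete, piecewise-interpolated lifts of $\mu$. Since $\mu\in AC^2([0,1];(\prob X,W_2))$, the metric speed $g:=|\dot\mu|$ lies in $L^2(0,1)$ and satisfies $W_2(\mu_s,\mu_t)\le\int_s^t g(r)\,\d r$ for $0\le s\le t\le 1$. For each $n\in\N$ set $t_k:=k/n$, pick optimal plans $\sigma_k\in\opt(\mu_{t_{k-1}},\mu_{t_k})$, and glue them iteratively to obtain $\hat\sigma^n\in\prob{X^{n+1}}$ whose $k$-th marginal is $\mu_{t_k}$ and whose $(k{-}1,k)$-marginal is $\sigma_k$.

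The next step is to lift $\hat\sigma^n$ to $\ppi^n\in\prob{C([0,1],X)}$ via a Borel selection of continuous curves that pass through the successive nodes at constant speed on each interval $[t_{k-1},t_k]$. In a geodesic space this is just piecewise geodesic interpolation; in the general compact case one can use an isometric embedding $X\hookrightarrow\ell^\infty$, interpolate affinely there, and invoke a measurable selection argument to keep the curves $X$-valued with length on $[t_{k-1},t_k]$ at most $(1+\eps_n)\sfd(x_{k-1},x_k)$ for some $\eps_n\downarrow 0$. Then $(\e_{t_k})_\sharp\ppi^n=\mu_{t_k}$, and by Cauchy--Schwarz applied to $W_2(\mu_{t_{k-1}},\mu_{t_k})\le\int_{t_{k-1}}^{t_k}g(r)\,\d r$,
\begin{equation*}
\int\!\!\int_{t_{k-1}}^{t_k}|\dot\gamma_r|^2\,\d r\,\d\ppi^n(\gamma)\leq (1+\eps_n)^2\,n\,W_2^2(\mu_{t_{k-1}},\mu_{t_k})\leq (1+\eps_n)^2\int_{t_{k-1}}^{t_k}g^2(r)\,\d r.
\end{equation*}
Summing in $k$ gives a uniform bound for $\iint_0^1|\dot\gamma_r|^2\,\d r\,\d\ppi^n$.

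Tightness of $(\ppi^n)$ in $\prob{C([0,1],X)}$ then follows from the Arzel\`a--Ascoli criterion: using Cauchy--Schwarz in the form $\sfd^2(\gamma_s,\gamma_t)\leq(t-s)\int_s^t|\dot\gamma_r|^2\,\d r$,
\begin{equation*}
\int\sfd^2(\gamma_s,\gamma_t)\,\d\ppi^n(\gamma)\leq (t-s)\!\iint_s^t|\dot\gamma_r|^2\,\d r\,\d\ppi^n \leq C(t-s)\!\int_{s-1/n}^{t+1/n}\!g^2(r)\,\d r,
\end{equation*}
which gives the required equi-integrability of the moduli. Extract a weak limit $\ppi^n\weakto\ppi$. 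The identity $(\e_{t_k})_\sharp\ppi^n=\mu_{t_k}$ passes to the limit on dyadic times, and combined with the continuity of $t\mapsto(\e_t)_\sharp\ppi$ in $W_2$ (a consequence of the same estimate) yields $(\e_t)_\sharp\ppi=\mu_t$ for every $t\in[0,1]$.

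To pin down the energy, use that $\ppi\mapsto\iint_0^1|\dot\gamma_t|^2\,\d t\,\d\ppi$ is lower semicontinuous under weak convergence, so the uniform bound above gives $\iint_0^1|\dot\gamma_t|^2\,\d t\,\d\ppi\le\int_0^1 g^2\,\d t=\int_0^1|\dot\mu_t|^2\,\d t$. For the reverse inequality, $(\e_s,\e_t)_\sharp\ppi$ is a coupling of $\mu_s$ and $\mu_t$, hence
\begin{equation*}
W_2^2(\mu_s,\mu_t)\leq \int\sfd^2(\gamma_s,\gamma_t)\,\d\ppi(\gamma)\leq (t-s)\!\iint_s^t |\dot\gamma_r|^2\,\d r\,\d\ppi;
\end{equation*}
dividing by $(t-s)^2$ and sending $s\to t$, Lebesgue differentiation yields $|\dot\mu_t|^2\le\int|\dot\gamma_t|^2\,\d\ppi$ for a.e.\ $t$. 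Integrating this and combining with the previous bound forces equality a.e. The principal obstacle is the lifting step: producing a \emph{Borel} family of continuous $X$-valued curves with a tight length-vs-distance control when $X$ is not a geodesic space. This is handled through a Kuratowski embedding together with a measurable-selection argument, the resulting loss $(1+\eps_n)$ being harmless in the limit.
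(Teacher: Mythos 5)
Your construction in the geodesic case is exactly the paper's argument (gluing of optimal plans at the nodes $k/n$, piecewise constant--speed geodesic lifting via a Borel selection, the per--interval Cauchy--Schwarz bound $n\,W_2^2(\mu_{t_{k-1}}\!,\mu_{t_k})\le\int_{t_{k-1}}^{t_k}|\dot\mu_r|^2\,\d r$, tightness from the uniform energy bound and compactness of sublevels of the energy in $C([0,1],X)$, and the easy converse inequality $|\dot\mu_t|^2\le\int|\dot\gamma_t|^2\,\d\ppi$ obtained from the coupling $(\e_s,\e_t)_\sharp\ppi$ and Lebesgue differentiation). That part is fine, and the final "integrate and combine" step correctly yields the a.e.\ pointwise identity.

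The genuine gap is the sentence claiming to treat the general (non--geodesic) compact case: you cannot ``interpolate affinely in $\ell^\infty$ and invoke a measurable selection to keep the curves $X$-valued with length at most $(1+\eps_n)\sfd(x_{k-1},x_k)$.'' The affine segments leave $X$, and in a general compact space there is simply no continuous $X$-valued curve joining $x_{k-1}$ to $x_k$ with comparable length --- indeed there may be no rectifiable curve, or no curve at all (take $X$ connected but not path-connected, or a compact snowflake-type set in which every nonconstant arc has infinite length). So the selection you invoke does not exist, and no $\eps_n\downarrow0$ saves it; this is precisely the ``technical complication'' the paper sidesteps by assuming $X$ geodesic and quoting Lisini for the general statement. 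Two honest repairs: (a) follow Lisini and interpolate by piecewise \emph{constant} (discontinuous) curves, handling the limit in a suitable space of possibly discontinuous paths; or (b) keep the affine interpolation but in a separable ambient space (e.g.\ embed $X$ isometrically in $C(X)$), prove tightness there (the closed convex hull of a compact set is compact, and your equicontinuity estimate applies), and only at the end observe that the limit plan is concentrated on $X$-valued curves, because for each rational $t$ the marginal $(\e_t)_\sharp\ppi=\mu_t$ is concentrated on the closed set $X$ and the curves are continuous; since the embedding is isometric, metric speeds are unchanged and the rest of your argument goes through verbatim. As written, though, the general-case step fails, while the geodesic-case proof coincides with the paper's.
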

\begin{proof}
If $\ppi\in C([0,1],X)$ is any plan concentrated on
$AC^2([0,1],X)$ such that $(\e_t)_\sharp\ppi=\mu_t$ for any
$t\in[0,1]$, since
$(\e_t,\e_s)_\sharp\ppi\in\adm(\mu_t,\mu_s)$, for any $t<s$ it holds
\[
W_2^2(\mu_t,\mu_s)\leq\int\sfd^2(\gamma_t,\gamma_s)\,\d\ppi(\gamma)
\leq\int\left(\int_t^s|\dot\gamma_r|\,\d r\right)^2\,\d\ppi(\gamma)
\leq(s-t)\iint_t^s|\dot\gamma_r|^2\,\d r\,\d\ppi(\gamma),
\]
which shows that $|\dot\mu_t|^2\leq
\int|\dot\gamma_t|^2\,\d\ppi(\gamma)$ for a.e.~$t$. Hence, to
conclude it is sufficient to find a plan $\ppi\in
\prob{C([0,1],X)}$, concentrated on $AC^2([0,1],X)$, with
$(\e_t)_\sharp\ppi=\mu_t$ for any $t\in[0,1]$ such that
$\int|\dot\mu_t|^2\,\d t\geq\iint_0^1|\dot\gamma_t|^2\,\d
t\,\d\ppi(\gamma)$.

To build such a $\ppi$ we make the simplifying assumption that
$(X,\sfd)$ is geodesic (the proof for the general case is similar,
but rather than interpolating with piecewise geodesic curves one
uses piecewise constant ones, this leads to some technical
complications that we want to avoid here - see \cite{Lisini07} for
the complete argument). Fix $n\in\N$ and use a gluing argument to
find $\ggamma^n\in\prob{X^{n+1}}$ such that
$(\pi^i,\pi^{i+1})_\sharp\ggamma^n\in\opt(\mu_{\frac
in},\mu_{\frac{i+1}n})$ for $i=0,\ldots,n-1$. By standard measurable
selection arguments, there exists a Borel map $T^n:X^{n+1}\to
C([0,1],X)$ such that $\gamma:=T^n(x_0,\ldots,x_n)$ is a constant
speed geodesic on each of the intervals $[i/n,(i+1)/n]$ and
$\gamma_{i/n}=x_i$, $i=0,\ldots,n$. Define
$\ppi^n:=T^n_\sharp\ggamma^n$. It holds
\begin{equation}
\label{eq:persuperposition} \iint_0^1|\dot\gamma_t|^2\,\d
t\,\d\ppi^n(\gamma)=
\frac1n\int\sum_{i=0}^{n-1}\sfd^2\big(\gamma_{\frac
in},\gamma_{\frac{i+1}n}\big)\,\d\ppi(\gamma)=
\frac1n\sum_{i=0}^{n-1}W_2^2\big(\mu_{\frac
in},\mu_{\frac{i+1}n}\big)\leq\int_0^1|\dot\mu_t|^2\,\d t.
\end{equation}
Now notice that the map $E:C([0,1],X)\to[0,\infty]$ given by
$E(\gamma):=\int_0^1|\dot\gamma_t|^2\,\d t$ if $\gamma\in
AC^2([0,1],X)$ and $+\infty$ otherwise, is lower semicontinuous and,
via a simple equicontinuity argument, with compact sublevels.
Therefore by Prokorov's theorem we get that
$(\ppi^n)\subset\prob{C([0,1],X)}$ is a tight sequence, hence for
any limit measure $\ppi$ the uniform bound
\eqref{eq:persuperposition} gives the thesis.
\end{proof}

\begin{proposition}\label{prop:boundweak}
Let $[0,1]\ni t\mapsto\mu_t=f_t\mm$ be a curve in $AC^2([0,1],(\prob
X,W_2))$.  Assume that for some $0<c<C<\infty$ it holds $c\leq
f_t\leq C$ $\mm$-a.e.~for any $t\in[0,1]$, and that $f_0$ is Sobolev
along a.e.~curve with $\weakgrad{f_0}\in L^2(X,\mm)$. Then
\[
\intX f_0\log f_0\,\d\mm-\intX f_t\log f_t\,\d\mm\leq
\frac12\int_0^t\int_X\frac{\weakgrad{f_0}^2}{f_0^2}f_s\,\d
s\,\d\mm+\frac12\int_0^t|\dot\mu_s|^2\,\d s,\qquad\forall t>0.
\]
\end{proposition}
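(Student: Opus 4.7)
My strategy is to lift the Wasserstein curve $(\mu_t)$ to a test plan on $AC^2([0,1];X)$, apply the weak upper gradient inequality to $\log f_0$ along almost every curve, and then convert the resulting pointwise bound into an entropy bound using the convexity of $z\mapsto z\log z$.

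\emph{Step 1 (lifting).} Since $(\mu_t)\in AC^2([0,1],(\prob X,W_2))$, Proposition~\ref{prop:lisini} provides $\ppi\in\prob{C([0,1],X)}$ concentrated on $AC^2([0,1],X)$ with $(\e_s)_\sharp\ppi=\mu_s=f_s\mm$ for every $s\in[0,1]$ and $\int|\dot\gamma_s|^2\,\d\ppi=|\dot\mu_s|^2$ for a.e.~$s$. The uniform bound $f_s\leq C$ gives $(\e_s)_\sharp\ppi\le C\mm$, so $\ppi$ is a test plan.

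\emph{Step 2 (weak upper gradient of $\log f_0$).} Since $f_0$ is Sobolev along a.e. curve with $\weakgrad{f_0}\in L^2$, Remark~\ref{re:restr} shows that for $\ppi$-a.e.\ $\gamma$ the map $r\mapsto f_0(\gamma_r)$ admits an absolutely continuous representative with $|\tfrac{\d}{\d r}(f_0\circ\gamma)|\le \weakgrad{f_0}(\gamma_r)|\dot\gamma_r|$ a.e. The lower bound $f_0\ge c>0$ makes $\log$ Lipschitz on the range of $f_0$, so $r\mapsto\log f_0(\gamma_r)$ is absolutely continuous and
\[
\log f_0(\gamma_0)-\log f_0(\gamma_t)\leq \int_0^t \frac{\weakgrad{f_0}(\gamma_r)}{f_0(\gamma_r)}|\dot\gamma_r|\,\d r\qquad\text{for $\ppi$-a.e.~$\gamma$.}
\]

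\emph{Step 3 (integration and Young).} Integrating against $\ppi$ and using $(\e_0)_\sharp\ppi=f_0\mm$, $(\e_t)_\sharp\ppi=f_t\mm$ on the left, and Fubini together with Young's inequality $ab\leq \tfrac12 a^2+\tfrac12 b^2$ on the right, I get
\[
\intX f_0\log f_0\,\d\mm-\intX f_t\log f_0\,\d\mm \leq \frac12\int_0^t\int\frac{\weakgrad{f_0}^2(\gamma_r)}{f_0^2(\gamma_r)}\,\d\ppi\,\d r+\frac12\int_0^t\int|\dot\gamma_r|^2\,\d\ppi\,\d r.
\]
The marginal identity $(\e_r)_\sharp\ppi=f_r\mm$ rewrites the first right-hand integral as $\intX\tfrac{\weakgrad{f_0}^2}{f_0^2}f_r\,\d\mm$, while Step~1 identifies the second with $|\dot\mu_r|^2$.

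\emph{Step 4 (convexity).} The inequality $z\log z\ge w\log w+(1+\log w)(z-w)$ with $z=f_t$, $w=f_0$, combined with mass conservation $\int(f_t-f_0)\,\d\mm=0$, yields
\[
\intX f_0\log f_0\,\d\mm-\intX f_t\log f_t\,\d\mm\leq \intX f_0\log f_0\,\d\mm-\intX f_t\log f_0\,\d\mm,
\]
and substituting this into Step~3 gives the claim.

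The only delicate point is the chain rule of Step~2, i.e.\ passing from a weak upper gradient of $f_0$ to one of $\log f_0$; the uniform positivity $f_0\ge c$ makes this routine. All other steps are bookkeeping with Fubini and the marginal identities of $\ppi$.
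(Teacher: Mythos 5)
Your proof is correct and follows essentially the same route as the paper's: lift the curve via Proposition~\ref{prop:lisini} to a test plan, apply the weak upper gradient of $\log f_0$ (equal to $\weakgrad{f_0}/f_0$ thanks to $f_0\geq c$) along almost every curve, use Young's inequality and the marginal identities, and control the entropy difference by $\intX\log f_0\,(f_0-f_t)\,\d\mm$ via convexity of $z\mapsto z\log z$. The only difference is cosmetic: you apply the convexity step after the curve-wise estimate, while the paper applies it first.
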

\begin{proof}
Let $\ppi\in\prob{C([0,1],X)}$ be a plan associated to the curve
$(\mu_t)$ as in Proposition~\ref{prop:lisini}. The assumption
$f_t\leq C$ $\mm$-a.e.~and the fact that
$\iint_0^1|\dot\gamma_t|^2\,\d
t\,\d\ppi(\gamma)=\int|\dot\mu_t|^2\,\d t<\infty$ guarantee that $\ppi$
is a test plan. Now notice that it holds $\weakgrad{\log
f_t}=\weakgrad{f_t}/f_t$ (because $z\mapsto \log z$ is $C^1$ in
$[c,C]$)), thus we get
\[
\begin{split}
\intX f_0\log f_0\,\,\d\mm-\intX f_t\log f_t\,\d\mm&\leq \intX \log f_0(f_0-f_t)\,\d\mm
=\int \Big(\log f_0\circ \e_0-\log f_0\circ \e_t\Big)\,\d\ppi\\
&\leq\iint_0^t\frac{\weakgrad{f_0}(\gamma_s)}{f_0(\gamma_s)}|\dot\gamma_s|\,\d s\,\d\ppi(\gamma)\\
&\leq\frac12\iint_0^t\frac{\weakgrad{f_0}^2(\gamma_s)}{f_0^2(\gamma_s)}\,\d s\,\d\ppi(\gamma)
+\frac12\iint_0^t|\dot\gamma_s|^2\,\d s\,\d\ppi(\gamma)\\
&=\frac12\int_0^t\int_X\frac{\weakgrad{f_0}^2}{f_0^2}f_s\,\d
s\,\d\mm+\frac12\int_0^t|\dot\mu_s|^2\,\d s.
\end{split}
\]
\end{proof}

\subsection{The two notions of gradient coincide}

Here we prove that the two notions of ``norm of weak gradient'' we
introduced coincide. We already noticed in Remark \ref{re:weakrel}
that $\weakgrad f\leq \relgrad f$, so that to conclude we need to
show that $\weakgrad f\geq \relgrad f$.

The key argument to achieve this is the following lemma, which gives
a sharp bound on the $W_2$-speed of the $L^2$-gradient flow of $\C$.
This lemma has been introduced in \cite{GigliKuwadaOhta10} to study
the heat flow on Alexandrov spaces, see also
Section~\ref{se:heatgf}.

\begin{lemma}[Kuwada's lemma]\label{le:kuwada}
Let $f_0\in L^2(X,\mm)$ and let $(f_t)$ be the $L_2$-gradient flow
of $\C$ starting from $f_0$. Assume that for some $0< c\leq
C<\infty$ it holds $c\leq f_0\leq C$ $\mm$-a.e.~in $X$, and that
$\intX f_0\,\d\mm=1$. Then the curve $t\mapsto \mu_t:=f_t\mm$ is
absolutely continuous w.r.t. $W_2$ and it holds
\[
|\dot\mu_t|^2\leq\intX\frac{\relgrad{f_t}^2}{f_t}\,\d \mm,\qquad
\text{for a.e.~$t\in (0,\infty)$.}
\]
\end{lemma}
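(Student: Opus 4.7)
The plan is to combine the Kantorovich duality formula \eqref{eq:dualitabase} (in its rescaled form, using the Hopf-Lax semigroup $Q_r$) with the Hamilton-Jacobi subsolution property (Theorem~\ref{thm:subsol}) and the integration by parts inequality \eqref{eq:delta1} for the Laplacian, wrapped together by a Young-type inequality whose squared terms cancel exactly. Note first that by the maximum principle in Proposition~\ref{prop:basecal} we have $c\le f_r\le C$ for all $r\ge 0$, so $\int \relgrad{f_r}^2/f_r\,\d\mm\le (2/c)\,\C(f_r)$ is finite, and by \eqref{eq:perdopo} it is even integrable in $r$.

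Fix $t<s$ and a bounded Lipschitz function $\phi:X\to\R$. I would study
\[
H(r):=\int_X Q_r\phi \,\d\mu_{t+r}=\int_X Q_r\phi\cdot f_{t+r}\,\d\mm,\qquad r\in[0,s-t],
\]
using that $r\mapsto Q_r\phi$ is Lipschitz from $[0,\infty)$ to $C(X)$ (Proposition~\ref{prop:timederivative}) and that $r\mapsto f_{t+r}$ is locally Lipschitz from $(0,\infty)$ to $L^2(X,\mm)$ with derivative $\Delta f_{t+r}$. Writing $H(s-t)-H(0)$ via Fubini as an integral over $r$ of $\partial_r(Q_r\phi\cdot f_{t+r})$, and bounding $\partial_r Q_r\phi\le -\tfrac12|DQ_r\phi|^2$ from Theorem~\ref{thm:subsol}, I get
\[
H(s-t)-H(0)\le \int_t^s\!\Bigl(-\tfrac12\!\int |DQ_r\phi|^2 f_{t+r}\,\d\mm
+\int Q_r\phi\cdot \Delta f_{t+r}\,\d\mm\Bigr)\d r.
\]
Now I would apply the integration-by-parts inequality \eqref{eq:delta1} with $g=Q_r\phi$ (which is Lipschitz, so $\relgrad{Q_r\phi}\le|DQ_r\phi|$ by \eqref{eq:facile}) and then Young's inequality in the form $ab\le\tfrac12 a^2 f_{t+r}+\tfrac12 b^2/f_{t+r}$ applied to $a=|DQ_r\phi|$ and $b=\relgrad{f_{t+r}}$. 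The quadratic term $\tfrac12\int|DQ_r\phi|^2 f_{t+r}\,\d\mm$ cancels, leaving
\[
\int_X Q_{s-t}\phi\,\d\mu_s-\int_X \phi\,\d\mu_t
\le \tfrac12\int_t^s\!\int_X\frac{\relgrad{f_r}^2}{f_r}\,\d\mm\,\d r.
\]

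Next, I would invoke Kantorovich duality in its Hopf-Lax formulation, which for the rescaled cost $\sfd^2/(2(s-t))$ yields
\[
\tfrac1{2(s-t)}W_2^2(\mu_t,\mu_s)=\sup_{\phi\in\mathrm{Lip}(X)}
\Bigl(\int Q_{s-t}\phi\,\d\mu_s-\int \phi\,\d\mu_t\Bigr).
\]
Taking the supremum in the previous inequality I obtain the key integrated estimate
\begin{equation}\label{eq:keyKuw}
W_2^2(\mu_t,\mu_s)\le (s-t)\int_t^s\!\int_X\frac{\relgrad{f_r}^2}{f_r}\,\d\mm\,\d r.
\end{equation}

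Finally, absolute continuity of $r\mapsto \mu_r$ follows from \eqref{eq:keyKuw} by a Cauchy-Schwarz argument: for any finite disjoint family $(t_{i-1},t_i)\subset E\subset[0,T]$,
\[
\sum_i W_2(\mu_{t_{i-1}},\mu_{t_i})\le \sum_i \sqrt{(t_i-t_{i-1})\textstyle\int_{t_{i-1}}^{t_i}h\,\d r}
\le \sqrt{|E|}\,\sqrt{\textstyle\int_E h\,\d r},
\]
where $h(r):=\int\relgrad{f_r}^2/f_r\,\d\mm\in L^1_{\rm loc}$; this tends to $0$ as $|E|\to 0$. Dividing \eqref{eq:keyKuw} by $(s-t)^2$ and passing to the limit $s\downarrow t$ at Lebesgue points of $h$ yields $|\dot\mu_t|^2\le h(t)$ a.e. The main delicate point I expect is step C: justifying termwise the differentiation of $H(r)$ (since $Q_r\phi$ has only a.e.\ time derivative and $f_{t+r}$ is differentiable only in $L^2$), and legitimately coupling the pointwise HJ subsolution inequality with the $L^2$-valued evolution $\partial_r f_{t+r}=\Delta f_{t+r}$ inside the integral. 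The clean way to handle this is to argue integrally via Fubini rather than via pointwise differentiation, so that \eqref{eq:Dini1} and the $L^2$ absolute continuity of $f_{t+r}$ are used only in integrated form; the integration by parts and Young's inequality are then applied pointwise in $r$ inside the $r$-integral.
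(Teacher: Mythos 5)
Your proof is correct and follows essentially the same route as the paper: Hopf--Lax plus the Hamilton--Jacobi subsolution property, the integration-by-parts inequality \eqref{eq:delta1} with \eqref{eq:facile}, and Young's inequality producing the exact cancellation, then Kantorovich duality and a Lebesgue-point argument. The only (cosmetic) difference is that you run $Q_r$ on $[0,s-t]$ with the duality rescaled to the cost $\sfd^2/(2(s-t))$, whereas the paper reparametrizes via $Q_{\tau/\ell}$ and uses the duality \eqref{eq:dualityQ} at scale $1$; the paper handles your ``delicate point'' by noting that the Leibniz rule applies to $\tau\mapsto Q_{\tau/\ell}\varphi\, f_{t+\tau}$, which is equivalent to your integrated Fubini argument.
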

\begin{proof}
We start from the duality formula \eqref{eq:dualitabase} with
$\varphi=-\psi$: taking into account the factor 2 and using the
identity $Q_1(-\psi)=\psi^c$ we get
\begin{equation}\label{eq:dualityQ}
\frac{W_2^2(\mu,\nu)}2=\sup_\varphi\int_X Q_1\varphi\, d\nu-\int_X\varphi\,d\mu\\
\end{equation}
where the supremum runs among all Lipschitz functions $\varphi$.

Fix such a $\varphi$ and recall
(Proposition~\ref{prop:timederivative}) that the map $t\mapsto
Q_t\varphi$ is Lipschitz with values in $L^\infty(X,\mm)$, and a
fortiori in $L^2(X,\mm)$.

Fix also $0\leq t<s$, set $\ell=(s-t)$ and recall that since $(f_t)$
is the Gradient Flow of $\C$ in $L^2$, the map $[0,\ell]\ni
\tau\mapsto f_{t+\tau}$ is absolutely continuous with values in $L^2$. Therefore
the map $[0,\ell]\ni\tau\mapsto Q_{\frac\tau\ell}\varphi\, f_{t+\tau}$
is absolutely continuous with values in $L^2$. The equality
\[
\frac{Q_{\frac{\tau+h}\ell}\varphi\, f_{t+\tau+h}-Q_{\frac{\tau}\ell}\,\varphi f_{t+\tau}}{h}=
f_{t+\tau}\,\frac{Q_{\frac{\tau+h}\ell}\varphi-Q_{\frac\tau\ell}\varphi }{h}+Q_{\frac{\tau+h}\ell}\varphi
\,\frac{ f_{t+\tau+h}- f_{t+\tau}}{h},
\]
together with the uniform continuity of $(x,\tau)\mapsto Q_{\frac\tau\ell}\varphi(x)$
shows that the derivative of $\tau\mapsto Q_{\GGG\frac\tau\ell}\varphi\, f_{t+\tau}$ can be computed via the Leibniz rule.

We have:
\begin{equation}
\label{eq:step1}
\begin{split}
\int_X Q_1\varphi \,\d\mu_s-\int_X\varphi \,\d\mu_t&=\intX Q_1\varphi f_{t+\ell}\,\d\mm-\int_X\varphi f_t\,\d\mm=
\int_X\int_0^\ell\frac{\d}{\d\tau}\big(Q_{\frac\tau\ell}\varphi f_{t+\tau}\big)\,\d\tau \,\d\mm\\
& \le \int_X\int_0^\ell \Big(-\frac{|D Q_{\frac\tau\ell}\varphi |^2}{2\ell}f_{t+\tau}+Q_{\frac\tau\ell}\varphi
\,\Delta f_{t+\tau}\Big)\,\d\tau \,\d\mm,\\
\end{split}
\end{equation}
having used Theorem~\ref{thm:subsol}. Observe that by inequalities \eqref{eq:delta1} and \eqref{eq:facile} we have
\begin{equation}
\label{eq:sarannouguali}
\begin{split}
\int_X Q_{\frac\tau\ell}\varphi \,\Delta f_{t+\tau} \,\d\mm&\leq
\int_X\relgrad{Q_{\frac\tau\ell}\varphi}\,\relgrad{f_{t+\tau}}\,\d\mm
\leq \int_X|D Q_{\frac\tau\ell}\varphi |\,\,\relgrad{f_{t+\tau}}\,\d\mm\\
&\leq \frac1{2\ell}\int_X|D Q_{\frac\tau\ell}\varphi |^2f_{t+\tau}d\mm+
\frac\ell2\int_X\frac{\relgrad{f_{t+\tau}}^2}{f_{t+\tau}}\,\d\mm.
\end{split}
\end{equation}
Plugging this inequality in \eqref{eq:step1}, we obtain
\[
\int_X Q_1\varphi \,\d\mu_s-\int_X\varphi \,\d\mu_t\leq
\frac\ell2\int_0^\ell\int_X\frac{\relgrad{f_{t+\tau}}^2}{f_{t+\tau}}\,\d\mm.
\]
This latter bound does not depend on $\varphi$, so from \eqref{eq:dualityQ} we deduce
\[
W_2^2(\mu_t,\mu_s)\leq\ell\int_0^\ell\int_X\frac{\relgrad{f_{t+\tau}}^2}{f_{t+\tau}}\,\d\mm.
\]
Since $f_{r}\geq c$ for any $r\geq 0$ and
$r\mapsto\C(f_{r})$ is nonincreasing and finite for every
$r>0$, we immediately get that $t\mapsto\mu_t$ is locally
Lipschitz in $(0,\infty)$. At Lebesgue points of
$t\mapsto\int_X\relgrad{f_t}^2/f_t\,\d\mm$ we obtain the
stated pointwise bound on the metric speed.
\end{proof}

\begin{theorem}\label{thm:graduguali}
Let $f\in L^2(X,\mm)$. Assume that $f$ is Sobolev along a.e.~curve
and that $\weakgrad f\in L^2(X,\mm)$. Then $f\in D(\C)$ and
$\relgrad f=\weakgrad f$ $\mm$-a.e.~in $X$.
\end{theorem}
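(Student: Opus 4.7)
The plan is to derive the reverse inequality $\relgrad f\geq \weakgrad f$ by comparing two computations of entropy dissipation along the $L^2$-gradient flow of $\C$: one from the gradient flow structure itself, the other via the Wasserstein estimate coming from Kuwada's lemma and Proposition~\ref{prop:boundweak}. Since Remark~\ref{re:weakrel} already gives $\weakgrad f\leq \relgrad f$ in $D(\C)$, an integrated inequality in the opposite direction will force equality.

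First I reduce to the case in which $c\leq f\leq C$ for some $0<c<C<\infty$ and $\intX f\,\d\mm=1$; this is done by truncation and normalization, using the locality of $\weakgrad{\cdot}$ and the truncation properties of relaxed slopes, and in particular by showing that under our hypotheses $f\in D(\C)$ (via the stability Theorem~\ref{thm:stabweak} applied to truncations). For such an $f_0:=f$, let $(f_t)$ be the $L^2$-gradient flow of $\C$ from $f_0$. By Proposition~\ref{prop:basecal} the flow preserves mass and the bounds $c\leq f_t\leq C$, and the entropy dissipation identity reads
\begin{equation*}
\intX f_0\log f_0\,\d\mm-\intX f_t\log f_t\,\d\mm=\int_0^t\intX\frac{\relgrad{f_s}^2}{f_s}\,\d\mm\,\d s.
\end{equation*}
By Lemma~\ref{le:kuwada}, the curve $\mu_s:=f_s\mm$ lies in $AC^2([0,\infty),(\prob X,W_2))$ with $|\dot\mu_s|^2\leq \intX\relgrad{f_s}^2/f_s\,\d\mm$, so Proposition~\ref{prop:boundweak} yields
\begin{equation*}
\intX f_0\log f_0\,\d\mm-\intX f_t\log f_t\,\d\mm\leq \frac12\int_0^t\intX\frac{\weakgrad{f_0}^2}{f_0^2}f_s\,\d\mm\,\d s+\frac12\int_0^t\intX\frac{\relgrad{f_s}^2}{f_s}\,\d\mm\,\d s.
\end{equation*}

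Combining the equality and the inequality and cancelling halves, I obtain
\begin{equation*}
\int_0^t\intX\frac{\relgrad{f_s}^2}{f_s}\,\d\mm\,\d s\leq \int_0^t\intX\frac{\weakgrad{f_0}^2}{f_0^2}f_s\,\d\mm\,\d s.
\end{equation*}
Dividing by $t$ and letting $t\downarrow 0$, the right-hand side tends to $\intX\weakgrad{f_0}^2/f_0\,\d\mm$ because $f_s\to f_0$ in $L^2$ while staying uniformly bounded. For the left-hand side, I use the chain rule (Proposition~\ref{prop:chain}) to write $\relgrad{f_s}^2/f_s=4\relgrad{\sqrt{f_s}}^2$ and invoke the lower semicontinuity of $\C$ (Theorem~\ref{thm:cheeger}) applied to $\sqrt{f_s}\to\sqrt{f_0}$ in $L^2$, which gives $\liminf_{s\downarrow 0}\intX\relgrad{f_s}^2/f_s\,\d\mm\geq \intX\relgrad{f_0}^2/f_0\,\d\mm$ and hence the same bound for the time-averages. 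This delivers the integral inequality
\begin{equation*}
\intX\frac{\relgrad{f_0}^2}{f_0}\,\d\mm\leq \intX\frac{\weakgrad{f_0}^2}{f_0}\,\d\mm.
\end{equation*}

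Since $\weakgrad{f_0}\leq \relgrad{f_0}$ $\mm$-a.e.\ by Remark~\ref{re:weakrel} and $f_0\geq c>0$, the two inequalities force the pointwise equality $\relgrad{f_0}=\weakgrad{f_0}$ $\mm$-a.e. Finally, I remove the boundedness assumption by approximating a general $f$ with truncations $f^{(n)}:=\min\{\max\{f,-n\},n\}$ shifted by constants to keep them positive; combining the chain rule for $\relgrad{\cdot}$ with the analogous truncation property for $\weakgrad{\cdot}$, passing to the limit in the equality on the sets $\{|f|<n\}$, and using Theorem~\ref{thm:stabweak} to transfer minimal weak upper gradients, the equality extends to $f$. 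The main obstacle I expect is Step~7, namely the passage to the limit $t\downarrow 0$ on the left-hand side; this is where the chain-rule rewriting via $\sqrt{f_s}$ and the lower semicontinuity of $\C$ are essential, and it is the step that most tightly couples the ``vertical'' (relaxed-slope) and ``horizontal'' (weak-upper-gradient) descriptions.
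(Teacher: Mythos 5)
Your proposal is correct and follows essentially the same route as the paper: reduce to $0<c\le f\le C$ (normalized), compare the exact entropy dissipation along the $L^2$-gradient flow of $\C$ with the upper bound coming from Kuwada's lemma and Proposition~\ref{prop:boundweak}, cancel the common half of the dissipation, and pass to the limit $t\downarrow0$ using the chain rule $\relgrad{f_s}^2/f_s=4\relgrad{\sqrt{f_s}}^2$, the $L^2$-lower semicontinuity of $\C$, and the uniform $L^\infty$ bounds (weak$^*$ convergence of $f_s$) on the Fisher-type term, then conclude via $\weakgrad f\le\relgrad f$. The concluding truncation step you sketch is exactly the reduction the paper dispatches with the phrase ``up to a truncation argument and addition of a constant.''
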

\begin{proof}
Up to a truncation argument and addition of a constant, we can
assume that $0<c\leq f\leq C<\infty$ $\mm$-a.e.~in $X$ for some
$c,\,C$. Let $(f_t)$ be the $L_2$-gradient flow of $\C$ 
starting from $f$
and recall
that from Proposition~\ref{prop:basecal} we have
\[
\intX f\log f\,\d\mm-\intX f_t\log f_t\,\d\mm=
\int_0^t\int_X\frac{\relgrad{f_s}^2}{f_s}\,\d
s\,\d\mm<\infty\qquad\forevery t>0.
\]
On the other hand, from Proposition~\ref{prop:boundweak} and
Lemma~\ref{le:kuwada} we have
\begin{equation}
\intX f\log f\,\d\mm-\intX f_t\log f_t\,\d\mm\leq
\frac12\int_0^t\int_X\frac{\weakgrad{f}^2}{f^2}f_s\,\d
s\,\d\mm+\frac12\int_0^t\int_X\frac{\relgrad{f_s}^2}{f_s}\,\d
s\,\d\mm.\label{eq:4}
\end{equation}
Hence we deduce
\[
\int_0^t 4\C(\sqrt{f_s})\,\d
s=\frac12\int_0^t\int_X\frac{\relgrad{f_s}^2}{f_s}\,\d s\,\d\mm
\leq\frac12\int_0^t\int_X\frac{\weakgrad f^2}{f^2}f_s\,\d s\,\d\mm.
\]
Letting $t\downarrow 0$, taking into account the $L^2$-lower
semicontinuity of $\C$ and the fact - easy to check from the maximum
principle - that $\sqrt{f_s}\to\sqrt f$ as $s\downarrow 0$ in
$L^2(X,\mm)$, we get $\C(\sqrt f)\leq\limi_{t\downarrow
0}\frac1t\int_0^t\C(\sqrt{f_s})\,\d s$. On the other hand, the bound
$f\geq c>0$ ensures $\frac{\weakgrad{f}^2}{f^2}\in L^1(X,\mm)$ and
the maximum principle again together with the convergence of $f_s$
to $f$ in $L^2(X,\mm)$ when $s\downarrow 0$ grants that the
convergence is also weak$^*$ in $L^\infty(X,\mm)$, therefore
$\intX\frac{\weakgrad f^2}f\,\d\mm=\frac 1t \lim_{t\downarrow
0}\int_0^t\int_X\frac{\weakgrad f^2}{f^2}f_s\,\d\mm\,\d s$.

In summary, we proved
\[
\frac12\intX\frac{\relgrad{f}^2}{f}\,\d\mm\leq\frac12\intX\frac{\weakgrad{f}^2}{f}\,\d\mm,
\]
which, together with the inequality $\weakgrad f\leq\relgrad f$
$\mm$-a.e.~in $X$, gives the conclusion.
\end{proof}

We are now in the position of defining the Sobolev space $W^{1,2}(X,\sfd,\mm)$.
We start with the following simple and general lemma.

\begin{lemma}\label{le:persob}
Let $(B,\|\cdot\|)$ be a Banach space and let $E:B\to[0,\infty]$ be
a 1-homogeneous, convex and lower semicontinuous map. Then the
vector space $\{E<\infty\}$ endowed with the norm
\[
\|v\|_E:=\sqrt{\|v\|^2+E^2(v)},
\]
is a Banach space.
\end{lemma}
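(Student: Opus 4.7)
The plan is to verify in turn that $V:=\{v\in B : E(v)<\infty\}$ is a vector subspace of $B$, that $\|\cdot\|_E$ is a norm on $V$, and finally that $(V,\|\cdot\|_E)$ is complete. Since $E$ is nonnegative, convex, and $1$-homogeneous (so in particular $E(\lambda v)=|\lambda|E(v)$ and $E(v+w)=2E(\tfrac{v+w}{2})\le E(v)+E(w)$ by the midpoint convexity inequality), subadditivity and absolute homogeneity follow at once. This makes $E$ behave like a seminorm on $V$, so $V$ is closed under sums and scalar multiples, hence a linear subspace.

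For the norm axioms on $V$, only the triangle inequality requires comment: positivity, definiteness (inherited from $\|\cdot\|$ via $\|v\|\le\|v\|_E$), and homogeneity are immediate from the corresponding properties of $\|\cdot\|$ and $E$. For subadditivity I would use the elementary Minkowski-type inequality in $\R^2$ with the Euclidean norm, namely
\[
\sqrt{(a_1+b_1)^2+(a_2+b_2)^2}\le\sqrt{a_1^2+a_2^2}+\sqrt{b_1^2+b_2^2}
\qquad\text{for }a_i,b_i\ge0,
\]
applied to $(a_1,a_2)=(\|v\|,E(v))$ and $(b_1,b_2)=(\|w\|,E(w))$, together with the subadditivity of $\|\cdot\|$ and of $E$.

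For completeness, let $(v_n)\subset V$ be Cauchy with respect to $\|\cdot\|_E$. Since $\|v_n-v_m\|\le\|v_n-v_m\|_E$, the sequence is Cauchy in the Banach space $B$, so there exists $v\in B$ with $v_n\to v$ in $B$. Using the bound $E(v_n)\le\|v_n\|_E$ the sequence $(E(v_n))$ is bounded, hence by the lower semicontinuity of $E$ we get $E(v)\le\liminf_n E(v_n)<\infty$, so $v\in V$. It remains to show $\|v_n-v\|_E\to0$, which amounts to $E(v_n-v)\to0$. Fix $\varepsilon>0$ and choose $N$ so large that $E(v_n-v_m)<\varepsilon$ for all $n,m\ge N$; for fixed $n\ge N$, the convergence $v_n-v_m\to v_n-v$ in $B$ as $m\to\infty$ combined with the lower semicontinuity of $E$ gives $E(v_n-v)\le\liminf_m E(v_n-v_m)\le\varepsilon$, and the proof is complete.

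The only potentially delicate point is ensuring that the lower semicontinuity argument applies to the translated differences $v_n-v_m$; this however is immediate because $\|\cdot\|$-convergence of $v_m$ to $v$ yields $\|\cdot\|$-convergence of $v_n-v_m$ to $v_n-v$, and lower semicontinuity of $E$ with respect to the $\|\cdot\|$-topology is precisely the hypothesis. The rest of the argument is purely formal and no compactness or reflexivity hypothesis on $B$ is needed.
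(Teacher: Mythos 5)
Your proof is correct and follows essentially the same route as the paper: the norm axioms are routine (the paper simply declares them clear, while you spell out the seminorm properties of $E$ and the planar Minkowski inequality), and completeness is obtained exactly as in the paper, by passing to the $\|\cdot\|$-limit in $B$ and using lower semicontinuity of $E$ both to show $E(v)<\infty$ and to bound $\|v_n-v\|_E$ by $\liminf_m\|v_n-v_m\|_E$.
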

\begin{proof}
It is clear that $(D(E),\|\cdot\|_E)$ is a normed space, so we only
need to prove completeness. Pick a sequence $(v_n)\subset D(E)$
which is Cauchy w.r.t. $\|\cdot\|_E$. Then, since
$\|\cdot\|\leq\|\cdot\|_E$ we also get that $(v_n)$ is Cauchy w.r.t.
$\|\cdot\|$, and hence there exists $v\in B$ such that $\|v_n-v\|\to
0$. The lower semicontinuity of $E$ grants that
$E(v)\leq\limi_nE(v_n)<\infty$ and also that  it holds
\[
\lims_{n\to\infty}\|v_n-v\|_E\leq\lims_{n,m\to\infty}\|v_n-v_m\|_E=0,
\]
which is the thesis.
\end{proof}

Therefore, if we want to build the space $W^{1,2}(X,\sfd,\mm)\subset
L^2(X,\mm)$, the only thing that we need is an $L^2$-lower
semicontinuous functional playing the role which on $\R^d$ is played
by the $L^2$-norm of the distributional gradient of Sobolev
functions. We certainly have this functional, namely the map
$f\mapsto\|\relgrad f\|_{L^2(X,\smm)}=\|\weakgrad
f\|_{L^2(X,\smm)}$. Hence the lemma above provides the Banach space
$W^{1,2}(X,\sfd,\mm)$. Notice that in general $W^{1,2}(X,\sfd,\mm)$
is not Hilbert: this is not surprising, as already the Sobolev space
$W^{1,2}$ built over $(\R^d,\|\cdot\|,\mathcal L^d)$ is not Hilbert
if the underlying norm $\|\cdot\|$ does not come from a scalar
product.

\subsection{Comparison with previous approaches}\label{sec:comparegradients}

It is now time to underline that the one proposed here is certainly
not the first definition of Sobolev space over a metric measure
space (we refer to \cite{Heinonen07} for a much broader overview on
the subject). Here we confine the discussion only to weak notions of
(modulus of) gradient, and in particular to \cite{Cheeger00} and
\cite{Koskela-MacManus98,Shanmugalingam00}. 
Also, we discuss only the quadratic case,
referring to \cite{Ambrosio-Gigli-Savare11tris} for
general power functions $p$ and the independence (in a suitable
sense) of $p$ of minimal gradients.

In \cite{Cheeger00} Cheeger proposed a relaxation procedure similar
to the one used in Subsection~\ref{se:rellip}, but rather than
relaxing the local Lipschitz constant of Lipschitz functions, he
relaxed upper gradients of arbitrary functions. More precisely, he
defined
\[
E(f):=\inf\limi_{n\to\infty}\|G_n\|_{L^2(X,\smm)},
\]
where the infimum is taken among all sequences $(f_n)$ converging to
$f$ in $L^2(X,\mm)$ such that $G_n$ is an upper gradient for $f_n$.
Then, with the same computations done in Subsection~\ref{se:rellip}
(actually and obviously, the story goes the other way around: we closely followed his arguments) he
showed that for $f\in D(E)$ there is an underlying notion of weak
gradient $\cgrad f$, called minimal generalized upper gradient, such
that $E(f)=\|\cgrad f\|_{L^2(X,\smm)}$ and
\[
\cgrad f\leq G\qquad\text{$\mm$-a.e.~in $X$,}
\]
for any $G$ weak limit of a sequence $(G_n)$ as in the definition of $E(f)$.

Notice that since the local Lipschitz constant is always an upper gradient for
Lipschitz functions, one certainly has
\begin{equation}
\label{eq:crel} \cgrad f\leq\relgrad f\qquad\text{$\mm$-a.e.~in $X$,
for any $f\in D(\C)$.}
\end{equation}
Koskela and MacManus \cite{Koskela-MacManus98} 
introduced and Shanmugalingam \cite{Shanmugalingam00} further studied a procedure close to
ours (again: actually we have been inspired by them)
to produce a notion of ``norm of weak gradient'' which does not
require a relaxation procedure. Recall that for $\Gamma\subset
AC([0,1],X)$ the 2-Modulus ${\rm Mod}_2(\Gamma)$ is defined by
\begin{equation}
\label{eq:defmod2} {\rm
Mod}_2(\Gamma):=\inf\Big\{\|\rho\|^2_{L^2(X,\mm)}\ : \
\int_\gamma\rho\geq 1\ \ \forall
\gamma\in\Gamma\Big\}\qquad\forevery\Gamma\subset AC([0,1],X).
\end{equation}
It is possible to show that the 2-Modulus is an outer measure on
$AC([0,1],X)$. Building on this notion, Koskela and MacManus
\cite{Koskela-MacManus98} considered the
class of functions $f$ which satisfy the upper gradient inequality
not necessarily along all curves, but only out of a ${\rm
Mod}_2$-negligible set of curves. In order to compare more properly
this concept to Sobolev classes, Shanmugalingam said that
$G:X\to[0,\infty]$ is a weak upper gradient for $f$ if there exists
$\tilde f=f$ $\mm$-a.e.~such that
\[
\big|\tilde f(\gamma_0)-\tilde f(\gamma_1)\big|\leq\int_\gamma
G\qquad\forevery \gamma\in AC([0,1],X)\setminus \mathcal N
\quad\text{with ${\rm Mod}_2(\mathcal N)=0$.}
\]
Then, she defined the energy
$\tilde E:L^2(X,\mm)\to[0,\infty]$ by putting
\[
\tilde E(f):=\inf\|G\|_{L^2(X,\mm)}^2,
\]
where the infimum is taken among all weak upper gradient $G$ of $f$
according to the previous condition. Thanks to the properties of the
2-modulus (a stability property of weak upper gradients analogous to
ours), it is possible to show that $\tilde E$ is indeed $L^2$-lower
semicontinuous, so that it leads to a good definition of the Sobolev
space. Also, using a key lemma due to Fuglede, Shanmugalingam proved
that $E=\tilde E$ on $L^2(X,\mm)$, so that they produce the same definition
of Sobolev space $W^{1,2}(X,\sfd,\mm)$ and the underlying gradient
$|D f|_S$ which gives a pointwise representation to $\tilde
E(f)$ is the same $\cgrad f$ behind the energy $E$.

Observe now that for a Borel set $\Gamma\subset AC^2([0,1],X)$ and a
test plan $\ppi$, integrating w.r.t. $\ppi$ the inequality
$\int_\gamma \rho\geq 1$ $\forall \gamma\in\Gamma$ and then
minimizing over $\rho$, we get
$$
\bigl[\ppi(\Gamma)\bigr]^2\leq C(\ppi){\rm
Mod}_2(\Gamma)\iint_0^1|\dot\gamma|^2\,\d s\,\d\ppi(\gamma),
$$
which shows that any ${\rm Mod}_2$-negligible set of curves is also
negligible according to Definition~\ref{def:testplans}. This fact
easily yields that any $f\in D(\tilde E)$ is Sobolev along a.e.
curve and satisfies
\begin{equation}
\label{eq:weakc} \weakgrad f\leq\cgrad f,\qquad\text{$\mm$-a.e.~in
$X$.}
\end{equation}
Given that we proved in Theorem~\ref{thm:graduguali} that $\relgrad
f=\weakgrad f$, inequalities \eqref{eq:crel} and \eqref{eq:weakc}
also give that $\relgrad f=\weakgrad f=\cgrad f=|D f|_S$ (the
smallest one among the four notions coincides with the largest one).

What we get by the new approach to Sobolev spaces on metric measure spaces
is the following result.

\begin{theorem}[Density in energy of Lipschitz functions]\label{thm:lipdense}
Let $(X,\sfd,\mm)$ be a compact normalized metric measure space.
Then for any $f\in L^2(X,\mm)$ with weak upper gradient in $L^2(X,\mm)$ there
exists a sequence $(f_n)$ of Lipschitz functions converging to $f$
in $L^2(X,\mm)$ such that both $|D f_n|$ and $\weakgrad{f_n}$
converge to $\weakgrad f$ in $L^2(X,\mm)$ as $n\to\infty$.
\end{theorem}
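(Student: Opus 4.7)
The plan is to bootstrap from what we already have: the strong approximation for the minimal relaxed slope (Proposition~\ref{le:strongappr}) plus the identification $\weakgrad f=\relgrad f$ (Theorem~\ref{thm:graduguali}) already give Lipschitz approximants with $|Df_n|\to\weakgrad f$ in $L^2$. The only extra work is to upgrade the inequality $\weakgrad{f_n}\leq|Df_n|$ to actual $L^2$-convergence of $\weakgrad{f_n}$ to $\weakgrad f$.

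\medskip

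\textbf{Step 1 (setup).} The hypothesis that $f$ has a weak upper gradient in $L^2(X,\mm)$, combined with Remark~\ref{re:restr}, ensures that $f$ is Sobolev along a.e.\ curve and hence that $\weakgrad f\in L^2(X,\mm)$ is well defined. Theorem~\ref{thm:graduguali} then gives $f\in D(\C)$ and $\relgrad f=\weakgrad f$ $\mm$-a.e.\ in $X$.

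\medskip

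\textbf{Step 2 (Lipschitz approximation with convergence of $|Df_n|$).} Apply Proposition~\ref{le:strongappr} to obtain Lipschitz functions $f_n$ with $f_n\to f$ in $L^2(X,\mm)$ and $|Df_n|\to\relgrad f=\weakgrad f$ in $L^2(X,\mm)$. This already takes care of the first half of the conclusion.

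\medskip

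\textbf{Step 3 (passing to $\weakgrad{f_n}$).} By Remark~\ref{re:weakrel} together with \eqref{eq:facile} applied to the Lipschitz functions $f_n$, we have $\weakgrad{f_n}\leq\relgrad{f_n}\leq|Df_n|$ $\mm$-a.e. Hence $(\weakgrad{f_n})$ is bounded in $L^2(X,\mm)$, and up to extracting a subsequence we may assume $\weakgrad{f_n}\weakto G$ in $L^2(X,\mm)$ for some nonnegative $G$. Testing the pointwise inequality $\weakgrad{f_n}\leq|Df_n|$ against arbitrary nonnegative $\phi\in L^2$ and passing to the limit yields $G\leq\weakgrad f$ $\mm$-a.e. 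On the other hand, the stability Theorem~\ref{thm:stabweak} applied to the sequence $(f_n,\weakgrad{f_n})$ shows that $G$ is itself a weak upper gradient of $f$, so by minimality $\weakgrad f\leq G$ $\mm$-a.e. Therefore $G=\weakgrad f$.

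\medskip

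\textbf{Step 4 (strong convergence).} From the norm inequality in Step 3,
\begin{equation*}
\limsup_{n\to\infty}\|\weakgrad{f_n}\|_{L^2}\leq\lim_{n\to\infty}\|{|Df_n|}\|_{L^2}=\|\weakgrad f\|_{L^2},
\end{equation*}
while weak lower semicontinuity of the $L^2$-norm gives the matching lower bound. Hence $\|\weakgrad{f_n}\|_{L^2}\to\|\weakgrad f\|_{L^2}$, and weak convergence plus convergence of norms in the Hilbert space $L^2(X,\mm)$ yields $\weakgrad{f_n}\to\weakgrad f$ strongly. Since the limit is uniquely identified, the full sequence converges.

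\medskip

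There is no real obstacle: the two notions of gradient having already been identified, the statement is essentially a consequence of Proposition~\ref{le:strongappr}, and the only mildly delicate point is the norm-convergence trick in Step 4, which is standard once Theorem~\ref{thm:stabweak} is invoked to pin down the weak limit.
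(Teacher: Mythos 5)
Your proof is correct and follows essentially the same route as the paper, whose own proof is precisely the combination of Theorem~\ref{thm:graduguali} (identifying $\weakgrad f$ with $\relgrad f$) and Proposition~\ref{le:strongappr}, with the convergence of $\weakgrad{f_n}$ left as a routine consequence that you spell out via Theorem~\ref{thm:stabweak} and the weak-convergence-plus-convergence-of-norms argument. One cosmetic remark: Theorem~\ref{thm:stabweak} requires $f_n\to f$ $\mm$-a.e., so in Step~3 you should pass to a further subsequence along which the $L^2$ convergence upgrades to $\mm$-a.e.\ convergence; this is harmless, since your final uniqueness-of-the-limit argument already works subsequence-wise.
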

\begin{proof}
Straightforward consequence of the identity of weak and relaxed
gradients and of Proposition~\ref{le:strongappr}.
\end{proof}
Let us point out a few aspects behind the strategy of the
proof of Theorem \ref{thm:lipdense},
which of course strongly relies on 
Lemma \ref{le:kuwada} and Proposition \ref{prop:boundweak}.
First of all, 
let us notice that the stated existence 
of a sequence of Lipschitz function
$f_n$ converging to $f$ with $|D f_n|\to \weakgrad f$ in
$L^2(X,\mm)$ is equivalent to show that
\begin{gather}
  \label{eq:2bis}
  \lim_{n\to\infty} Y_{1/n}(f)\le \intX \weakgrad f^2\,\d\mm,
  \intertext{where, for $\tau>0$, $Y_\tau$ denotes
    the Yosida regularization}
  \notag
  Y_\tau(f):=\,\inf_{h\in \Lip(X)}\left\{\frac
    12\intX|D h|^2\,\d\mm
    +\frac 1{2\tau} \intX |h-f|^2\,\d\mm
  \right\}.
\end{gather}
In fact, the sequence $f_n$ can be chosen by a simple diagonal
argument 
among the approximate minimizers of $Y_{1/n}(f)$.
On the other hand,
it is well known that the relaxation procedure we used to define the
Cheeger energy yields 
\begin{equation}
  \label{eq:3}
  Y_{1/n}(f)=\min_{h\in D(\C)}\left\{\C(h)+\frac n{2}\intX |h-f|^2\,\d\mm\right\},
\end{equation}
and therefore \eqref{eq:2bis} 
could be achieved by trying to estimate the Cheeger energy 
of the unique minimizer $\tilde f_n$ of \eqref{eq:3} in terms of 
$\weakgrad f$. 

Instead of using the Yosida regularization $Y_{1/n}$,
in the proof of Theorem \ref{thm:graduguali} 
we obtained a better approximation of $f$ 
by flowing it 
(for a small time step, say $t_n\downarrow 0$)
through the $L^2$- gradient flow $f_t$ of the Cheeger energy.
This flow is strictly related to $Y_\tau$, since it can
be obtained as the limit of suitably rescaled 
iterated minimizers of $Y_\tau$
(the so called Minimizing Movement scheme, see
e.g.~\cite{Ambrosio-Gigli-Savare08}), but has the 
great advantage to provide a continuous curve of probability densities
$f_t$, which can be represented as 
the image of a test plan, through Lisini's Theorem.
Thanks to this representation and Kuwada's Lemma, 
we were allowed to use 
the weak upper gradient $\weakgrad f$ instead of $\relgrad f$
to estimate the Entropy dissipation along $f_t$ 
(see \eqref{eq:4}) and
to obtain the desired sharp bound of $\relgrad {f_s}$ at least 
for some time $s\in (0,t_n)$. 
In any case, \emph{a posteriori} we recovered the validity of \eqref{eq:2bis}.

This density result was previously known (via the use of maximal
functions and covering arguments) under the assumption that the
space was doubling and supported a local Poincar\'e inequality for
weak upper gradients, see \cite[Theorem~4.14,
Theorem~4.24]{Cheeger00}. Actually, Cheeger proved more, namely that
under these hypotheses Lipschitz functions are dense in the
$W^{1,2}$ norm, a result which is still unknown in the general case.
Also, notice that another byproduct of our density in energy result
is the equivalence of local Poincar\'e inequality stated for
Lipschitz functions on the left hand side and slope on the right
hand side, and local Poincar\'e inequality stated for general
functions on the left hand side and upper gradients on the right
hand side; this result was previously known \cite{Heinonen-Koskela99} under much more restrictive assumptions on
the metric measure structure.

\section{The relative entropy and its $W_2$-gradient flow}\label{se:gfent}

In this section we study the $W_2$-gradient flow of the relative
entropy on spaces with Ricci curvature bounded below (in short:
$CD(K,\infty)$ spaces). The content is essentially extracted from
\cite{Gigli10}. As before the space $(X,\sfd,\mm)$ is compact and
normalized (i.e. $\mm(X)=1$).

Recall that the relative entropy functional $\entv:\prob X\to[0,\infty]$ is defined by
\[
\ent\mu:=\left\{
\begin{array}{ll}
\displaystyle{\int_Xf\log f\,\d\mm}&\textrm{ if }\mu=f\mm,\\
+\infty&\textrm{ otherwise.}
\end{array}
\right.
\]

\begin{definition}[Weak bound from below on the Ricci curvature] \label{def:cdkinfty}
We say that $(X,\sfd,\mm)$ has Ricci curvature bounded from below by
$K$ for some $K\in\R$ if the Relative Entropy functional $\entv$ is
$K$-convex along geodesics in $(\prob X,W_2)$. More precisely, if
for any $\mu_0,\,\mu_1\in D(\entv)$ there exists a constant speed
geodesic $\mu_t:[0,1]\to\Probabilities{X}$ between $\mu_0$ and
$\mu_1$ satisfying
$$
{\rm Ent}_\smm(\mu_t)\leq (1-t){\rm Ent}_\smm(\mu_0)+ t{\rm
Ent}_\smm(\mu_1)-\frac{K}{2}t(1-t)W_2^2(\mu_0,\mu_1)\qquad \forall
t\in [0,1].
$$
\end{definition}

This definition was introduced in \cite{Lott-Villani09} and
\cite{Sturm06I}. Its two basic features are: {\bf compatibility} with
the Riemannian case (i.e. a compact Riemannian manifold endowed with
the normalized volume measure has Ricci curvature bounded below by
$K$ in the classical pointwise sense if and only if $\entv$ is
$K$-geodesically convex in $(\prob X,W_2)$) and {\bf stability}
w.r.t. measured Gromov-Hausdorff convergence.

We also recall that Finsler geometries are included in the class of
metric measure spaces with Ricci curvature bounded below. This means
that if we have a smooth compact Finsler manifold (that is: a
differentiable manifold endowed with a norm - possibly not coming
from an inner product - on each tangent space which varies smoothly
on the base point) endowed with an arbitrary positive $C^\infty$
measure, then this space has Ricci curvature bounded below
by some $K\in\R$ (see the theorem stated at page 926 of
\cite{Villani09} for the flat case and \cite{Ohta-finsler09} for the general
one). 

The goal now is to study the $W_2$-gradient flow of $\entv$. Notice
that the general theory of gradient flows of $K$-convex functionals
ensures the following existence result (see the representation
formula for the slope \eqref{eq:slopegeodetiche} and
Theorem~\ref{thm:edeex}).

\begin{theorem}[Consequences of the general theory of gradient flows]\label{thm:easygf}
Let $(X,\sfd,\mm)$ be a $CD(K,\infty)$ space. Then the slope
$|D^-\entv|$ is lower semicontinuous w.r.t. weak convergence
and for any $\mu\in D(\entv)$ there exists a gradient flow 
(in the EDE sense of Definition \ref{def:ede}) of
$\entv$ starting from $\mu$.
\end{theorem}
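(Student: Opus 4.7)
The statement decomposes into two independent claims, each of which can be obtained by invoking material already developed earlier in the paper, so I do not expect any serious obstacle. My plan is the following.

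For the lower semicontinuity of $|D^-\entv|$ with respect to weak convergence, I would exploit the $K$-geodesic convexity of $\entv$ to apply the representation formula \eqref{eq:slopegeodetiche}, which gives
\[
|D^-\entv|(\mu) \;=\; \sup_{\nu\neq\mu}\,\bigg(\frac{\entv(\mu)-\entv(\nu)}{W_2(\mu,\nu)}+\frac{K}{2}W_2(\mu,\nu)\bigg)^{\!+}.
\]
This expresses the slope as a supremum, parametrised by a fixed $\nu\in\prob X$, of functions of $\mu$. For every such $\nu$, the map $\mu\mapsto\entv(\mu)$ is weakly lower semicontinuous (a standard fact, following from the dual formula $\entv(\mu)=\sup_{g\in C(X)}\int g\,\d\mu-\log\int e^g\,\d\mm$), while $\mu\mapsto W_2(\mu,\nu)$ is weakly continuous, since $W_2$ metrizes weak convergence on the compact space $\prob X$. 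Hence the quantity inside the supremum is lower semicontinuous in $\mu$ on $\{\mu\neq\nu\}$, its positive part still is, and a supremum of lower semicontinuous functions is lower semicontinuous. A routine $\varepsilon$-argument — for $\mu_n\rightharpoonup\mu$, pick $\nu\neq\mu$ almost attaining the supremum at $\mu$, note $\mu_n\neq\nu$ eventually, and apply the pointwise LSC along the sequence — concludes the first claim.

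For the existence of an EDE gradient flow, the plan is simply to verify the hypotheses of Theorem \ref{thm:edeex} and invoke it. Compactness of $X$ together with the fact that $W_2$ metrises weak convergence yields compactness of $(\prob X, W_2)$; the functional $\entv$ is $K$-geodesically convex by the very definition of $CD(K,\infty)$ (Definition \ref{def:cdkinfty}); and it is lower semicontinuous by the dual formula recalled above. The theorem then produces, for every $\mu\in D(\entv)$, the desired gradient flow in the EDE sense starting from $\mu$.

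The only step requiring a small amount of care is the LSC of the slope, specifically the handling of the constraint $\nu\neq\mu$ in the supremum when passing to the limit; but since the competitor $\nu$ is chosen independently of the approximating sequence, any $\mu_n$ sufficiently close to $\mu\neq\nu$ automatically satisfies $\mu_n\neq\nu$, so no real difficulty arises.
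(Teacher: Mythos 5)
Your proposal is correct and follows exactly the route the paper indicates for this statement: lower semicontinuity of the slope via the representation formula \eqref{eq:slopegeodetiche} (using weak lower semicontinuity of $\entv$, weak continuity of $W_2$, and a supremum of lower semicontinuous functions), and existence by applying Theorem \ref{thm:edeex} to the $K$-geodesically convex, lower semicontinuous functional $\entv$ on the compact space $(\prob X,W_2)$. No gaps worth noting beyond the routine handling of the constraint $\nu\neq\mu$, which you already address.
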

Thus, existence is granted. The problem is then to show uniqueness
of the gradient flow. To this aim, we need to introduce the concept
of \emph{push forward via a plan}.

\begin{definition}[Push forward via a plan]\label{def:pushplan}
Let $\mu\in\prob X$ and let $\ggamma\in\prob{X^2}$ be such that
$\mu\ll\pi^1_\sharp\ggamma$. The measures  $\ggamma_\mu\in\prob{X^2}$
and ${\ggamma_\sharp\mu}\in \prob X$
are defined as:
\[
\d\ggamma_\mu(x,y):=\frac{\d\mu}{\d\pi^1_\sharp
  \ggamma}(x)\d\ggamma(x,y),\qquad
{\ggamma_\sharp\mu}:=\pi^2_\sharp \ggamma_\mu.
\]
\end{definition}
Observe that, since $\ggamma_\mu\ll\ggamma$, we have
${\ggamma_\sharp\mu}\ll\pi^2_\sharp \ggamma$. We will say that $\ggamma$
has {bounded \compression{}} if there exist $0<c\leq C<\infty$
such that $c\mm\leq\pi^i_\sharp\ggamma\leq C\mm$, $i=1,2$. Writing
$\mu=\frho\,\pi^1_\sharp\ggamma$, the definition gives that
\begin{equation}
\label{eq:densgammasharp}
\ggamma_\sharp\mu=\eta\,\pi^2_\sharp\ggamma\quad\text{with $\eta$ given by}\quad
\eta(y)=\int\frho(x)\,\d\ggamma_y(x),
\end{equation}
where $\{\ggamma_y\}_{y\in X}$ is the disintegration of $\ggamma$ w.r.t. its second marginal.

The operation of push forward via a plan has interesting properties
in connection with the relative entropy functional.

\begin{proposition}\label{prop:pushgamma}
The following properties hold:
\begin{itemize}
\item[(i)] For any $\mu,\,\nu\in\prob X$, $\ggamma\in\prob{X^2}$ such that $\mu,\,\nu\ll\pi^1_\sharp\ggamma$
it holds
\[\entr{\ggamma_\sharp\mu}{\sggamma_\sharp\nu}\leq\entr\mu\nu.
\]
\item[(ii)] For $\mu\in D(\entv)$ and $\ggamma\in\prob{X^2}$ with bounded \compression{}, it holds $\ggamma_\sharp\mu\in
D(\entv)$.
\item[(iii)] Given $\ggamma\in\prob{X^2}$ with bounded \compression{}, the map
\[
D(\entv)\ni\mu\qquad\mapsto \qquad\entr\mu{\smm}-\entr{\ggamma_\sharp\mu}{\smm},
\]
is convex (w.r.t. linear interpolation of measures).
\end{itemize}
\end{proposition}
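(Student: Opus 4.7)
The strategy is to prove (i) directly as a Jensen/data-processing inequality, then deduce (ii) by applying (i) to the distinguished choice $\nu=\pi^1_\sharp\ggamma$, and finally combine (i) with an algebraic identity for the relative entropy to obtain (iii). The main technical obstacle sits in (i); items (ii) and (iii) are essentially corollaries of (i) plus bookkeeping with the bounded-deformation bounds.

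For (i), if $\mu\not\ll\nu$ there is nothing to prove, so I assume $f:=\d\mu/\d\nu$ is well defined and write $\mu=f_\mu\,\pi^1_\sharp\ggamma$, $\nu=f_\nu\,\pi^1_\sharp\ggamma$ with $f_\mu=f\,f_\nu$. Disintegrate $\ggamma$ with respect to $\sigma:=\pi^2_\sharp\ggamma$ as $\ggamma=\int\ggamma_y\otimes\delta_y\,\d\sigma(y)$, so that \eqref{eq:densgammasharp} yields $\ggamma_\sharp\mu=\eta_\mu\sigma$ and $\ggamma_\sharp\nu=\eta_\nu\sigma$ with $\eta_\mu(y)=\int f_\mu\,\d\ggamma_y$ and $\eta_\nu(y)=\int f_\nu\,\d\ggamma_y$. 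On $\{\eta_\nu>0\}$ the probability measure $\tilde\ggamma_y:=f_\nu\,\ggamma_y/\eta_\nu(y)$ is well defined and satisfies $\eta_\mu(y)/\eta_\nu(y)=\int f\,\d\tilde\ggamma_y$; on $\{\eta_\nu=0\}$ the assumption $\mu\ll\nu$ forces $\eta_\mu=0$ as well, already showing $\ggamma_\sharp\mu\ll\ggamma_\sharp\nu$. Jensen's inequality applied to the convex function $\phi(z):=z\log z$ then gives $\phi(\eta_\mu/\eta_\nu)\le\int\phi(f)\,\d\tilde\ggamma_y$, and integration against $\eta_\nu\sigma$ followed by unwinding the definitions of $\tilde\ggamma_y$ and of $\sigma$ produces exactly $\entr{\ggamma_\sharp\mu}{\ggamma_\sharp\nu}\le\int\phi(f)\,\d\nu=\entr{\mu}{\nu}$.

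For (ii), I apply (i) with $\nu:=\pi^1_\sharp\ggamma$; since the Radon--Nikodym density $\d\nu/\d\pi^1_\sharp\ggamma$ is identically $1$, one has $\ggamma_\nu=\ggamma$ and hence $\ggamma_\sharp\nu=\pi^2_\sharp\ggamma$, so that (i) specializes to $\entr{\ggamma_\sharp\mu}{\pi^2_\sharp\ggamma}\le\entr{\mu}{\pi^1_\sharp\ggamma}$. Setting $h_i:=\d\pi^i_\sharp\ggamma/\d\mm$, bounded deformation gives $c\le h_i\le C$, hence $\log h_i\in L^\infty(X,\mm)$, and the elementary identities
\[
\entr{\mu}{\pi^1_\sharp\ggamma}=\entr{\mu}{\mm}-\int\log h_1\,\d\mu,\qquad
\entr{\ggamma_\sharp\mu}{\mm}=\entr{\ggamma_\sharp\mu}{\pi^2_\sharp\ggamma}+\int\log h_2\,\d\ggamma_\sharp\mu
\]
transfer the finiteness of $\entr{\mu}{\mm}$ to that of $\entr{\ggamma_\sharp\mu}{\mm}$, proving $\ggamma_\sharp\mu\in D(\entv)$.

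For (iii), I exploit the affinity of $\mu\mapsto\ggamma_\sharp\mu$: setting $\mu_t:=(1-t)\mu_0+t\mu_1$ and $\nu_i:=\ggamma_\sharp\mu_i$, one has $\ggamma_\sharp\mu_t=(1-t)\nu_0+t\nu_1=:\nu_t$. A direct density computation with $\phi(z)=z\log z$ gives the algebraic identity
\[
(1-t)\entr{\rho_0}{\rho_t}+t\entr{\rho_1}{\rho_t}=(1-t)\entr{\rho_0}{\mm}+t\entr{\rho_1}{\mm}-\entr{\rho_t}{\mm}
\]
for any $\rho_0,\rho_1\ll\mm$ with $\rho_t:=(1-t)\rho_0+t\rho_1$. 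Since bounded deformation guarantees $\mu_i,\mu_t\ll\pi^1_\sharp\ggamma$, two applications of (i) with $(\mu,\nu)=(\mu_i,\mu_t)$, $i=0,1$, give $\entr{\nu_i}{\nu_t}\le\entr{\mu_i}{\mu_t}$; combining this with the above identity applied to both $(\mu_0,\mu_1)$ and $(\nu_0,\nu_1)$ shows that the convexity gap of $\entv$ along $\mu_0,\mu_1$ dominates the one along $\nu_0,\nu_1$, which is precisely the required convexity of the functional $F(\mu):=\entr{\mu}{\mm}-\entr{\ggamma_\sharp\mu}{\mm}$.
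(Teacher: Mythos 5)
Your proposal is correct and follows essentially the same route as the paper: (i) via disintegration w.r.t.\ the second marginal and Jensen's inequality with the reweighted conditional measures $\tilde\ggamma_y$, (ii) via the change-of-reference identity for the entropy together with the bounds $c\mm\le\pi^i_\sharp\ggamma\le C\mm$, and (iii) via the affinity of $\mu\mapsto\ggamma_\sharp\mu$ and the identity expressing the convexity gap of $\entv$ as a convex combination of relative entropies, then invoking (i). The only differences are cosmetic (you make the affinity and the null-set $\{\eta_\nu=0\}$ explicit, which the paper leaves implicit).
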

\begin{proof}\\* {\bf (i)}. We can assume $\mu\ll\nu$, otherwise
there is nothing to prove. Then it is immediate to check from the
definition that $\ggamma_\sharp\mu\ll\ggamma_\sharp\nu$. Let
$\mu=\frho\nu$, 
$\nu=\theta\, \pi^1_\sharp\gamma,$
$\ggamma_\sharp\mu=\eta\,\ggamma_\sharp\nu$, and
$u(z):=z\log z$. By disintegrating $\ggamma$ as in
\eqref{eq:densgammasharp}, we have that
\begin{displaymath}
  \eta(y)=\int \frho(x)\,\d\tilde\ggamma_y(x),\quad
  \tilde\ggamma_y=\Big(\int \theta(x)\,\d\ggamma_y(x)\Big)^{-1}\,\theta\,\ggamma_y.
\end{displaymath}
The convexity of $u$ and Jensen's inequality with the probability
measures $\tilde\ggamma_y$ yield
\begin{displaymath}
  u(\eta(y))\le \int u(\frho(x))\,\d\tilde\ggamma_y(x).
\end{displaymath}
Since $\{\tilde\ggamma_y\}_{y\in X}$ is the disintegration of
$\tilde\ggamma=(\theta\circ\pi^1)\ggamma$ with respect to its second
marginal $\ggamma_\sharp\nu$ and the first marginal of $\tilde\ggamma$ is $\nu$, by
integration of both sides with respect to $\ggamma_\sharp\nu$  we get
\[
\begin{split}
\entr{\ggamma_\sharp\mu}{\sggamma_\sharp\nu}&=
\int u(\eta(y))\,\d\ggamma_\sharp\nu(y)\le \int \Big(\int u(\frho(x))\,\d\tilde\ggamma_y(x) \Big)\,\d\ggamma_\sharp\nu(y)\\
&\leq \int u(\frho(x))\,\d\tilde\ggamma(x,y)=
\int u(\frho(x))\,\d\nu(x)=\entr{\mu}{\nu}.
\end{split}
\]
\noindent{\bf (ii)}. Taking into account the identity
\begin{equation}
\label{eq:scambioentropie}
\entr\mu\nu=\entr\mu\sigma+\int\log\Big(\frac{\d\sigma}{\d\nu}\Big)\,\d\mu,
\end{equation}
valid for any $\mu,\,\nu,\sigma\in\prob X$ with $\sigma$ having
bounded density w.r.t. $\nu$, the fact that
$\ggamma_\sharp(\pi^1_\sharp\ggamma)=\pi^2_\sharp\ggamma$ and the
fact that $c\mm\leq\pi^1_\sharp \ggamma,\pi^2_\sharp \ggamma\leq C\mm$, the
conclusion follows from
\[
\entr{\ggamma_\sharp\mu}\smm\leq
\entr{\ggamma_\sharp\mu}{\pi^2_\sharp\sggamma} +\log
C\leq\entr\mu{\pi^1_\sharp\sggamma}+\log C\leq\entr\mu\smm+\log
C-\log c.
\]
\noindent{\bf (iii)}. Let $\mu_0,\,\mu_1\in D(\entv)$ and define
$\mu_t:=(1-t)\mu_0+t\mu_1$ and $\nu_t:=\ggamma_\sharp\mu_t$. A
direct computation shows that
\[
\begin{split}
(1-t){\rm Ent}_{\smm}(\mu_0)+t{\rm Ent}_{\smm}(\mu_1)-{\rm Ent}_{\smm}(\mu_t)
&=(1-t){\rm Ent}_{\mu_t}(\mu_0)+t{\rm Ent}_{\mu_t}(\mu_1),\\
(1-t){\rm Ent}_{\smm}(\nu_0)+t{\rm Ent}_{\smm}(\nu_1)-{\rm Ent}_{\smm}(\nu_t)
&=(1-t){\rm Ent}_{\nu_t}(\nu_0)+t{\rm Ent}_{\nu_t}(\nu_1),
\end{split}
\]
and from $(i)$ we have that
\[
\entr{\mu_i}{\mu_t}\geq\entr{\ggamma_\sharp\mu_i}{\sggamma_\sharp\mu_t}
=\entr{\nu_i}{\nu_t},\qquad\forall t\in [0,1],\ i=0,1,
\]
which gives the conclusion.
\end{proof}

In the next lemma and in the sequel we use the short notation
$$
C(\ggamma):=\int_{X\times X}\sfd^2(x,y)\,\d\ggamma(x,y).
$$

\begin{lemma}[Approximability in Entropy and distance]
\label{le:appr} Let $\mu,\,\nu\in D(\entv)$. Then there exists a
sequence $(\ggamma^n)$ of plans with bounded \compression{} such that
$\ent{\ggamma^n_\sharp\mu}\to\ent\nu$ and $C(\ggamma^n_\mu)\to
W_2^2(\mu,\nu)$ as $n\to\infty$.
\end{lemma}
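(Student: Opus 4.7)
The plan is to regularize both marginals so that they have densities bounded both above and below by positive multiples of $\mm$, take $\ggamma^n$ to be optimal plans between the regularizations, and then verify the two required convergences using Proposition~\ref{prop:pushgamma}(i) together with the entropy-swapping identity \eqref{eq:scambioentropie}.

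\textbf{Construction.} Writing $\mu=f\mm$, $\nu=g\mm$, choose a truncation sequence $M_n\uparrow\infty$ and a perturbation $\delta_n\downarrow 0$ (whose rates will be tuned in Step 3) and set
\[
\mu^n:=(1-\delta_n)\frac{f\wedge M_n}{c_n}\mm+\delta_n\mm,\qquad
\nu^n:=(1-\delta_n)\frac{g\wedge M_n}{d_n}\mm+\delta_n\mm,
\]
with $c_n:=\int f\wedge M_n\,\d\mm\to 1$ and $d_n:=\int g\wedge M_n\,\d\mm\to 1$. The densities of $\mu^n,\nu^n$ lie in $[\delta_n,M_n+1]$, so the measures are bounded above and below by positive multiples of $\mm$. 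Let $\ggamma^n\in\opt(\mu^n,\nu^n)$: its marginals are $\mu^n,\nu^n$, hence $\ggamma^n$ has bounded \compression{}, and since $\mu^n$ has everywhere positive density we have $\mu\ll\mu^n$ so that $\ggamma^n_\mu$ is well defined.

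\textbf{Step 1: cost convergence.} By stability of optimal transport (continuity of $W_2$ on the compact space $X$), $C(\ggamma^n)=W_2^2(\mu^n,\nu^n)\to W_2^2(\mu,\nu)$. Since the Radon--Nikodym factor $d\ggamma^n_\mu/d\ggamma^n=d\mu/d\mu^n$ depends only on the first coordinate,
\[
|C(\ggamma^n_\mu)-C(\ggamma^n)|=\Big|\int \sfd^2(x,y)\bigl(\tfrac{d\mu}{d\mu^n}(x)-1\bigr)\,\d\ggamma^n\Big|\le(\diam X)^2\|\mu-\mu^n\|_{TV}\to 0,
\]
so $C(\ggamma^n_\mu)\to W_2^2(\mu,\nu)$.

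\textbf{Step 2: entropy convergence.} Applying Proposition~\ref{prop:pushgamma}(i) to $\ggamma=\ggamma^n$ with the pair $(\mu,\mu^n)$, and noting that $\pi^1_\sharp\ggamma^n=\mu^n$ implies $\ggamma^n_\sharp\mu^n=\pi^2_\sharp\ggamma^n=\nu^n$, gives
\[
\entr{\ggamma^n_\sharp\mu}{\nu^n}\le\entr{\mu}{\mu^n}.
\]
A direct splitting according to the regions $\{f\le M_n\}$ and $\{f>M_n\}$, together with $f\log f\in L^1(\mm)$, shows $\entr{\mu}{\mu^n}\to 0$; the same argument applied to $\nu$ gives $\ent{\nu^n}\to\ent\nu$. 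Now \eqref{eq:scambioentropie} with $\sigma:=\nu^n$ yields
\[
\ent{\ggamma^n_\sharp\mu}-\ent{\nu^n}=\entr{\ggamma^n_\sharp\mu}{\nu^n}+\int\log\tfrac{d\nu^n}{d\mm}\,\d\bigl(\ggamma^n_\sharp\mu-\nu^n\bigr);
\]
Pinsker's inequality gives $\|\ggamma^n_\sharp\mu-\nu^n\|_{TV}\le\sqrt{2\,\entr{\mu}{\mu^n}}$, and $|\log(d\nu^n/d\mm)|\le|\log\delta_n|\vee\log(M_n+1)$. Choosing $M_n$ large and $\delta_n$ small diagonally so that $\entr{\mu}{\mu^n}=o\bigl((\log\delta_n)^{-2}\wedge(\log M_n)^{-2}\bigr)$ (always possible since, for each fixed $\delta_n$, the left-hand side tends to $0$ as $M_n\to\infty$) makes the correction term vanish, and we conclude $\ent{\ggamma^n_\sharp\mu}\to\ent\nu$.

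\textbf{Main obstacle.} The delicate point is the entropy step: the lower bound $\log\delta_n\to-\infty$ on $\log(d\nu^n/d\mm)$ forces one to rate-match the truncation ($M_n$) against the perturbation ($\delta_n$) so that $\max(|\log\delta_n|,\log M_n)\cdot\sqrt{\entr{\mu}{\mu^n}}\to 0$. This diagonal choice of parameters, rather than a single scalar rate, is the central technical ingredient; once in place, Proposition~\ref{prop:pushgamma}(i) and \eqref{eq:scambioentropie} yield both convergences essentially automatically.
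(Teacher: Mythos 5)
Your construction (regularize both marginals, take optimal plans between the regularizations) is genuinely different from the paper's, and it has a real gap exactly at the point you call the ``main obstacle'': the rate-matching in Step 2 cannot be carried out in general, and the justification you give for it is false. For fixed $\delta>0$, as $M\to\infty$ one has $\mu^{(M,\delta)}\to(1-\delta)\mu+\delta\mm$ in total variation, so $\entr{\mu}{\mu^{(M,\delta)}}$ tends to $\entr{\mu}{(1-\delta)\mu+\delta\mm}$, which is strictly positive whenever $\mu\neq\mm$ (e.g.\ $f=2\nchi_A$ with $\mm(A)=1/2$ gives the limit $\log\frac{2}{2-\delta}$); so it is not true that for fixed $\delta_n$ the relative entropy tends to $0$ as $M_n\to\infty$, and the proposed diagonal choice has no basis. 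Worse, the condition you actually need, $\bigl(|\log\delta_n|\vee\log(M_n+1)\bigr)\sqrt{\entr{\mu}{\mu^n}}\to0$, is unachievable for some $\mu\in D(\entv)$ no matter how $M_n\uparrow\infty$, $\delta_n\downarrow0$ are chosen. Indeed, splitting on $\{f\le M_n\}$ and $\{f>M_n\}$,
\[
\entr{\mu}{\mu^n}\ \ge\ \int_{\{f>M_n\}}f\log\frac{f}{(1-\delta_n)M_n/c_n+\delta_n}\,\d\mm\ -\ C\delta_n,
\]
and if one takes $f$ with values in $\{0\}\cup[2,\infty)$ the second region contributes nonnegatively, so the tail term is a genuine lower bound. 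Choosing a heavy entropy tail, e.g.\ $f=\sum_k e^k\nchi_{A_k}$ with $\mm(A_k)e^k=c\,k^{-3}$, one has $f\log f\in L^1(X,\mm)$ but $\int_{\{f>M\}}f\log(f/M)\,\d\mm\sim(\log M)^{-1}$, whence $\entr{\mu}{\mu^n}\gtrsim(\log M_n)^{-1}$ for every admissible choice of $\delta_n$, and therefore $\log(M_n+1)\sqrt{\entr{\mu}{\mu^n}}\gtrsim\sqrt{\log M_n}\to\infty$. The qualitative facts you prove ($\entr{\mu}{\mu^n}\to0$, $\ent{\nu^n}\to\ent\nu$, $\entr{\ggamma^n_\sharp\mu}{\nu^n}\to0$) are all correct, but smallness of a relative entropy combined with an $L^\infty$ bound on $\log(\d\nu^n/\d\mm)$ that blows up with $n$ is simply too weak to transfer $\ent{\nu^n}\to\ent\nu$ to $\ent{\ggamma^n_\sharp\mu}\to\ent\nu$.

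The paper sidesteps all of this by never re-solving the transport problem: it fixes one optimal plan $\ggamma\in\opt(\mu,\nu)$, sets $A_n:=\{(x,y):f(x)+g(y)\le n\}$ and takes $\ggamma^n:=c_n\bigl(\ggamma\restr{A_n}+\tfrac1n(\Id,\Id)_\sharp\mm\bigr)$. The restriction keeps the geometry of the optimal plan, so $C(\ggamma^n_\mu)\to W_2^2(\mu,\nu)$ is essentially monotone/dominated convergence (the diagonal part has zero cost), and on $A_n$ both densities are bounded by $n$, which gives explicit pointwise control of the density of $\ggamma^n_\sharp\mu$ in terms of $g$ plus a vanishing remainder; the entropy convergence then follows without any quantitative entropy--total-variation estimate. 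If you want to salvage your marginal-regularization route, you would need a finer argument than the sup-norm/Pinsker bound for the correction term $\int\log(\d\nu^n/\d\mm)\,\d(\ggamma^n_\sharp\mu-\nu^n)$, and it is not clear one exists, since $\d(\ggamma^n_\sharp\mu)/\d\nu^n$ is not uniformly bounded.
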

\begin{proof} Let $f$ and $g$ respectively be the densities of $\mu$ and
  $\nu$ w.r.t.\ $\mm$;  
  pick $\ggamma\in\opt(\mu, \nu)$ and, for every $n\in\N$, let $A_n:=\{(x,y):f(x)+g(y)\leq n\}$ and
\[
\ggamma_n:=c_n\left(\ggamma\restr{A_n}+\frac1n(\Id,\Id)_\sharp \mm\right),
\]
where $c_n\to 1$ is the normalization constant. It is immediate to
check that $\ggamma_n$ is of bounded \compression{} and that this
sequence satisfies the thesis (see \cite{Gigli10} for further
details).
\end{proof}

\begin{proposition}[Convexity of the squared slope]\label{prop:slopeconvessa}
Let  $(X,\sfd,\mm)$ be a $CD(K,\infty)$ space. Then the map
\[
D(\entv)\ni\mu\qquad\mapsto\qquad|D^-\entv|^2(\mu)
\]
is convex (w.r.t. linear interpolation of measures).
\end{proposition}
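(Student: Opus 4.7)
The plan is to represent $|D^-\entv|^2(\mu)$ as a pointwise supremum, indexed by plans $\ggamma$ of bounded deformation and parameters $\sigma\ge 0$, of functionals that are individually convex in $\mu$; convexity of $|D^-\entv|^2$ will then follow, since a pointwise supremum of convex functions is convex. The three ingredients are the supremum formula \eqref{eq:slopegeodetiche}, the approximation Lemma~\ref{le:appr}, and the convexity statement (iii) of Proposition~\ref{prop:pushgamma}.

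First I would square \eqref{eq:slopegeodetiche} using the identity $[z^+]^2=\sup_{\lambda\ge 0}(2\lambda z-\lambda^2)$ applied to $z=(\entv(\mu)-\entv(\nu))/W_2(\mu,\nu)+\tfrac K2 W_2(\mu,\nu)$, and change variable to $\sigma=\lambda/W_2(\mu,\nu)$, obtaining
\[
|D^-\entv|^2(\mu)=\sup_{\nu\neq\mu,\,\sigma\ge 0}\Big(2\sigma[\entv(\mu)-\entv(\nu)]+(\sigma K-\sigma^2)W_2^2(\mu,\nu)\Big).
\]
Setting $X_\nu:=\entv(\mu)-\entv(\nu)+\tfrac K2 W_2^2(\mu,\nu)$ and $Y_\nu:=W_2(\mu,\nu)$, the next step is to show that the supremum remains unchanged when restricted to $\sigma\ge K^+:=\max(K,0)$. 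Given $\bar\nu$ with $X_{\bar\nu}>0$ (the only case contributing a positive value), consider the geodesic $(\mu_t)_{t\in[0,1]}$ joining $\mu$ to $\bar\nu$: the $K$-geodesic convexity of $\entv$ gives $X_{\mu_t}\ge tX_{\bar\nu}$, while $Y_{\mu_t}=tY_{\bar\nu}$, so $X_{\mu_t}^+/Y_{\mu_t}\ge X_{\bar\nu}^+/Y_{\bar\nu}$ and $\sigma^\ast(\mu_t):=X_{\mu_t}/Y_{\mu_t}^2\ge\sigma^\ast(\bar\nu)/t\to+\infty$ as $t\downarrow 0$; for small $t$ the optimum at $\mu_t$ lies in $[K^+,\infty)$ and the constrained value at $\mu_t$ matches the unconstrained value at $\bar\nu$.

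I would then invoke Lemma~\ref{le:appr} to replace the supremum over measures $\nu$ by one over plans $\ggamma$ of bounded deformation, with $\nu$ replaced by $\ggamma_\sharp\mu$ and $W_2^2(\mu,\nu)$ by $C(\ggamma_\mu)$: in one direction the approximation $\entv(\ggamma^n_\sharp\mu)\to\entv(\nu)$ and $C(\ggamma^n_\mu)\to W_2^2(\mu,\nu)$ supplied by the lemma yields the lower bound; in the opposite direction, the restriction $\sigma\ge K^+$ forces $\sigma K-\sigma^2\le 0$, which combined with $W_2^2(\mu,\ggamma_\sharp\mu)\le C(\ggamma_\mu)$ shows the plan expression is dominated by the measure expression, itself bounded by $|D^-\entv|^2(\mu)$. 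The outcome is
\[
|D^-\entv|^2(\mu)=\sup_{\ggamma,\,\sigma\ge K^+}\Big(2\sigma[\entv(\mu)-\entv(\ggamma_\sharp\mu)]+(\sigma K-\sigma^2)C(\ggamma_\mu)\Big),
\]
and for each fixed pair $(\ggamma,\sigma)$ with $\sigma\ge K^+\ge 0$ the functional in parentheses is convex in $\mu$: the first summand is the nonnegative multiple $2\sigma$ of $\mu\mapsto\entv(\mu)-\entv(\ggamma_\sharp\mu)$, convex by (iii) of Proposition~\ref{prop:pushgamma}, while the second summand is linear in $\mu$ because $C(\ggamma_\mu)=\int\sfd^2(x,y)\frac{\d\mu}{\d\pi^1_\sharp\ggamma}(x)\,\d\ggamma(x,y)$ depends linearly on the density of $\mu$ with respect to $\pi^1_\sharp\ggamma$. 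Passing to the pointwise supremum then gives the desired convexity. The main obstacle is the restriction $\sigma\ge K^+$: without it, when $K>0$ the coefficient $\sigma K-\sigma^2$ becomes positive for $\sigma<K$ and the inequality $C(\ggamma_\mu)\ge W_2^2$ would then inflate the plan expression \emph{above} $|D^-\entv|^2$; the geodesic rescaling argument is precisely what forces the optimum into the good range $\sigma\ge K^+$, preserving the plan representation as an identity.
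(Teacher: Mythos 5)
Your argument is correct, and it shares the paper's skeleton — represent the squared slope as a supremum, over plans $\ggamma$ of bounded \compression{}, of functionals that are convex under linear interpolation, using exactly the same two pillars, Lemma~\ref{le:appr} and part (iii) of Proposition~\ref{prop:pushgamma} together with the linearity of $\mu\mapsto C(\ggamma_\mu)$ — but the convexification device is genuinely different. The paper keeps the quotient form coming from \eqref{eq:slopegeodetiche}: it first replaces $K$ by $-K^-$ in the slope formula (legitimate, since the curvature correction vanishes as $\nu\to\mu$), so that the elementary inequality $(a-b)^+/\sqrt b\ge (a-c)^+/\sqrt c$ for $0<b\le c$ transfers the representation from $W_2^2$ to $C(\ggamma_\mu)$ with no sign problem, and then obtains convexity of each term $\mu\mapsto\bigl[(\,\cdot\,)^+\bigr]^2/C(\ggamma_\mu)$ by composing the convex numerator and linear denominator with the jointly convex, monotone function $\Psi(a,b)=a^2/b$. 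You instead dualize the square via $[z^+]^2=\sup_{\lambda\ge0}(2\lambda z-\lambda^2)$, so that each term of your supremum is a nonnegative multiple of the convex functional $\mu\mapsto\entv(\mu)-\entv(\ggamma_\sharp\mu)$ plus a term linear in $\mu$; this avoids both $\Psi$ and the $K\mapsto -K^-$ substitution, and incidentally removes any need for a convention when $C(\ggamma_\mu)=0$. The price is your extra step restricting to $\sigma\ge K^+$ (only relevant for $K>0$), which is needed because $W_2^2(\mu,\ggamma_\sharp\mu)\le C(\ggamma_\mu)$ helps only when the coefficient $\sigma K-\sigma^2$ is nonpositive; your geodesic rescaling $X_{\mu_t}\ge tX_{\bar\nu}$, $Y_{\mu_t}=tY_{\bar\nu}$, which pushes the optimal $\sigma^\ast$ to infinity, is a correct fix and plays exactly the role that the $K^-$ substitution plays in the paper (note also that the degenerate case in which no $\nu$ gives $X_\nu>0$ is harmless, since the identity plan $(\Id,\Id)_\sharp\mm$ already yields the value $0$ in your plan supremum). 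So both proofs stand; the paper's is a bit shorter because the sign issue is dispatched at the outset, yours trades that for a more transparent ``supremum of convex plus linear'' structure.
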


Notice that the only assumption that we make is the $K$-convexity of
the entropy w.r.t. $W_2$, and from this we deduce the convexity
w.r.t. the classical linear interpolation of measures of the squared
slope.

\begin{proof} Recall that from \eqref{eq:slopegeodetiche} we know that
\[
|D^-\entv|(\mu)=\sup_{\nu\in\probt X\atop \nu\neq\mu}
\frac{\left[\ent\mu-\ent\nu-\frac{K^-}2W^2_2(\mu,\nu)\right]^+}{W_2(\mu,\nu)}.
\]
We claim that it also holds
\[
|D^-\entv|(\mu)=\sup_{\sggamma}\frac{\left[\ent\mu-\ent{\ggamma_\sharp\mu}-
\frac{K^-}2C(\ggamma_\mu)\right]^+}{\sqrt{C(\ggamma_\mu)}},
\]
where the supremum is taken among all plans with bounded \compression{}
(where the right hand side is taken 0 by definition if 
$C(\ggamma_\mu)>0$). 

Indeed, Lemma~\ref{le:appr} gives that the first expression is not
larger than the second. For the converse inequality we can assume
$C(\ggamma_\mu)>0$, $\nu=\ggamma_\sharp \mu\neq\mu$, and $K<0$.
Then it is
sufficient to apply the simple inequality
\[
a,\,b,\,c\in\R,\ \  0<b\leq
c\qquad\Rightarrow\qquad\frac{(a-b)^+}{\sqrt
b}\geq\frac{(a-c)^+}{\sqrt c},
\]
with $a:=\ent\mu-\ent{\ggamma_\sharp\mu}$, $b:=\frac{K^-}2W^2_2(\mu,\ggamma_\sharp\mu)$
and $c:=\frac{K^-}2C(\ggamma_\mu)$.

Thus, to prove the thesis it is  enough to show that for every
$\ggamma$ with bounded \compression{} the map
\[
D(\entv)\ni\mu\qquad\mapsto\qquad\frac{\bigl[\bigl(\ent\mu-\ent{\ggamma_\sharp\mu}
-\frac{K^-}2C(\ggamma_\mu)\bigr)^+\bigr]^2}{C(\ggamma_\mu)},
\]
is convex w.r.t. linear interpolation of measures.

Clearly the map
\[
D(\entv)\ni\mu\qquad\mapsto\qquad C(\ggamma_\mu)=\int\left( \int \sfd^2(x,y)\,\d\ggamma_{x}(y)\right)\,\d\mu(x),
\]
where $\{\ggamma_x\}$ is the disintegration of $\ggamma$ w.r.t. its
first marginal, is linear. Thus, from $(iii)$ of
Proposition~\ref{prop:pushgamma} we know that the map
\[
\mu\qquad\mapsto\qquad \ent\mu-\ent{\ggamma_\sharp\mu}-\frac{K^-}2C(\ggamma_\mu),
\]
is convex w.r.t. linear interpolation of measures. Hence the same is
true for its positive part. The conclusion follows from the fact
that the function $\Psi:[0,\infty)^2\to\R\cup\{+\infty\}$ defined by
\[
\Psi(a,b):=\left\{
\begin{array}{ll}
\dfrac{a^2}b,&\qquad\textrm{ if }b>0,\\
+\infty&\qquad\textrm{ if }b=0,a>0\\
0&\qquad\textrm{ if }a=b=0,\\
\end{array}
\right.
\]
is convex and it is nondecreasing w.r.t. $a$.
\end{proof}

The convexity of the squared slope allows to prove uniqueness of the gradient flow of the entropy:

\begin{theorem}[Uniqueness of the gradient flow of $\entv$]\label{thm:gfent}
Let $(X,\sfd,\mm)$ be a $CD(K,\infty)$ space and let $\mu\in
D(\entv)$. Then there exists a unique gradient flow of $\entv$
starting from $\mu$ in $(\prob X,W_2)$.
\end{theorem}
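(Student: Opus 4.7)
Existence has already been supplied by Theorem~\ref{thm:easygf}, so the task is uniqueness. My strategy is the classical mid-point/convexity argument, the novelty being that the convexity of the squared slope (Proposition~\ref{prop:slopeconvessa}) upgrades what would merely give a convexity bound into a true uniqueness statement, once combined with strict convexity of $z \log z$.

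\textbf{Step 1: construct the midpoint curve.} Let $\mu_t^1$ and $\mu_t^2$ be two gradient flows of $\entv$ starting from $\mu$, and set $\mu_t := \tfrac12(\mu_t^1+\mu_t^2)$. Using the linearity of the admissibility constraint (\eqref{eq:w2conve}), for every $0\le s\le t$ one has
\[
W_2^2(\mu_s,\mu_t)\le \tfrac12 W_2^2(\mu_s^1,\mu_t^1)+\tfrac12 W_2^2(\mu_s^2,\mu_t^2),
\]
which shows that $(\mu_t)$ is absolutely continuous with
\[
|\dot\mu_t|^2\le \tfrac12|\dot\mu_t^1|^2+\tfrac12|\dot\mu_t^2|^2\qquad\text{for a.e.\ $t$.}
\]

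\textbf{Step 2: assemble three convexity inequalities.} Linear convexity of $z\mapsto z\log z$ gives $\ent{\mu_t}\le \tfrac12\ent{\mu_t^1}+\tfrac12\ent{\mu_t^2}$; Proposition~\ref{prop:slopeconvessa} gives $|D^-\entv|^2(\mu_t)\le \tfrac12|D^-\entv|^2(\mu_t^1)+\tfrac12|D^-\entv|^2(\mu_t^2)$; Step~1 gives the analogous bound for the squared metric speed. Taking the average of the two EDE equalities \eqref{eq:defede} written for $\mu_t^1$ and $\mu_t^2$ and inserting the three inequalities yields
\[
\ent{\mu_t}+\tfrac12\int_0^t|\dot\mu_s|^2\,\d s+\tfrac12\int_0^t|D^-\entv|^2(\mu_s)\,\d s\;\le\; \ent{\mu_0}.
\]

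\textbf{Step 3: force equality and conclude.} On the other hand, since $(\mu_t)$ is absolutely continuous and the descending slope of a $K$-convex lower semicontinuous functional is an upper gradient (cf.\ \eqref{eq:slopestrongupper}--\eqref{eq:boundtuttecurve}),
\[
\ent{\mu_0}\le \ent{\mu_t}+\tfrac12\int_0^t|\dot\mu_s|^2\,\d s+\tfrac12\int_0^t|D^-\entv|^2(\mu_s)\,\d s.
\]
Combining these two displays forces equality throughout; in particular $\ent{\mu_t}=\tfrac12\ent{\mu_t^1}+\tfrac12\ent{\mu_t^2}$ for every $t\ge 0$. By strict convexity of $z\log z$ this identity for the entropy of the half-sum implies that the two densities coincide $\mm$-a.e., hence $\mu_t^1=\mu_t^2$.

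\textbf{Main obstacle.} The nontrivial ingredient is the inequality $|D^-\entv|^2(\mu_t)\le \tfrac12|D^-\entv|^2(\mu_t^1)+\tfrac12|D^-\entv|^2(\mu_t^2)$, which is exactly Proposition~\ref{prop:slopeconvessa}; without it one would only get weak contraction-type estimates. Modulo that, the proof is a bookkeeping argument combining the EDE with the three simultaneous convexity inequalities, and the final ``strict-convexity of $z\log z$'' step is where uniqueness (as opposed to mere convexity of the flow) is extracted.
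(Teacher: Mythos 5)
Your argument is correct and is essentially the paper's own proof: the same midpoint curve $\mu_t=\tfrac12(\mu_t^1+\mu_t^2)$, the same three convexity ingredients (convexity of $W_2^2$ under linear interpolation for the metric speed, Proposition~\ref{prop:slopeconvessa} for the squared slope, and strict convexity of $z\log z$ for the entropy), combined with the EDE identities and the upper-gradient bound \eqref{eq:boundtuttecurve}. The only difference is presentational: the paper uses strict convexity at a time $T$ with $\mu_T^1\neq\mu_T^2$ to produce a strict inequality contradicting \eqref{eq:boundtuttecurve}, while you force equality first and then invoke strict convexity, which is the same argument rearranged.
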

\begin{proof} We recall (inequality \eqref{eq:w2conve}) that the squared Wasserstein distance is convex
w.r.t. linear interpolation of measures. Therefore, given two
absolutely continuous curves $(\mu^1_t)$ and $(\mu^2_t)$, the curve
$t\mapsto \mu_t:=\frac{\mu^1_t+\mu^2_t}{2}$ is absolutely continuous
as well and its metric speed can be bounded from above by
\begin{equation}
\label{eq:w2conv}
|\dot{\mu}_t|^2\leq\frac{|\dot\mu^1_t|^2+|\dot\mu^2_t|^2}{2},\qquad
\text{for a.e.~$t\in (0,\infty)$.}
\end{equation}
Let $(\mu^1_t)$ and $(\mu^2_t)$ be gradient flows of $\entv$
starting from $\mu\in D(\entv)$. Then we have
\[
\begin{split}
\ent{\mu}&=\ent{\mu^1_T}+\frac12\int_0^T|\dot\mu^1_t|^2\,\d t+
\frac12\int_0^T|D^-\entv|^2(\mu^1_t)\,\d t,\quad\forall T\geq 0,\\
\ent{\mu}&=\ent{\mu^2_T}+\frac12\int_0^T|\dot{\mu}^2_t|^2\,\d t+
\frac12\int_0^T|D^-\entv|^2(\mu^2_t)\,\d t,\quad\forall T\geq 0.
\end{split}
\]
Adding up these two equalities, using the convexity of the squared
slope guaranteed by Proposition~\ref{prop:slopeconvessa}, the
convexity of the squared metric speed given by \eqref{eq:w2conv} and
the \emph{strict} convexity of the relative entropy, we deduce that
for the curve $t\mapsto\mu_t$ 
it holds
\[
\ent{\mu}>\ent{\mu_T}+\frac12\int_0^T|\dot{\mu}_t|^2\,\d t+\frac12\int_0^T|D^-\entv|^2(\mu_t)\,\d t,
\]
for every $T$ such that $\mu^1_T\neq \mu^2_T$. This contradicts inequality \eqref{eq:boundtuttecurve}.
\end{proof}

\section{The heat flow as gradient flow}\label{se:heatgf}

It is well known that on $\R^d$ the heat flow can be seen both as
gradient flow of the Dirichlet energy in $L^2$ and as gradient flow
of the relative entropy in $(\probt{\R^d},W_2)$. It is therefore
natural to ask whether this identification between the two a priori
different gradient flows persists or not
in a general compact and normalized metric measure space $(X,\sfd,\mm)$. 

The strategy consists in considering a gradient flow $(f_t)$ of $\C$
with nonnegative initial data and in proving that the curve
$t\mapsto \mu_t:=f_t\mm$ is a gradient flow of $\ent\cdot$ in
$(\prob X,W_2)$: by the uniqueness result of Theorem \ref{thm:gfent}
this will be sufficient to conclude.

We already built most of the ingredients needed for the proof to
work, the only thing that we should add is the following lemma,
where the slope of $\entv$ is bounded from above in terms of the
notions of ``norm of weak gradient'' that we discussed in
Chapter~\ref{se:Sobolev}. Notice that the bound \eqref{eq:slopelip} for Lipschitz functions was already known to Lott-Villani (\cite{Lott-Villani09}), so that our added value here is the use of the density in energy of Lipschitz functions to get the correct, sharp inequality \eqref{eq:5} (sharpness will be seen in \eqref{eq:slopesharp}).
\begin{lemma}[Fisher bounds slope]\label{prop:fishslope}
  Let $(X,\sfd,\mm)$ be a compact and normalized $CD(K,\infty)$ metric-measure
  space  and
  let $\frho$ be a probability density
  which is Sobolev along a.e.\ curve.
 Then
 \begin{equation}
|D^-\entv|^2(\frho\mm)\leq
\intX\frac{\weakgrad\frho^2}{\frho}\,\d\mm =4\intX \weakgrad{\sqrt \frho}^2\,\d\mm.\label{eq:5}
\end{equation}
\end{lemma}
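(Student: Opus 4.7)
The plan has three steps: (i) establish the right-hand equality in \eqref{eq:5} via the chain rule for weak upper gradients; (ii) take the sharp slope bound for Lipschitz probability densities as a known input (the Lott--Villani inequality mentioned in the statement's preamble); (iii) close the gap to arbitrary $\rho$ via the density-in-energy Theorem~\ref{thm:lipdense} and the weak lower semicontinuity of $|D^-\entv|$ from Theorem~\ref{thm:easygf}.

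For step (i), Proposition~\ref{prop:chain} (whose conclusion transfers from $\relgrad{\cdot}$ to $\weakgrad{\cdot}$ thanks to Theorem~\ref{thm:graduguali}) applied to $\phi(z)=z^2$ gives $\weakgrad\rho = 2\sqrt\rho\,\weakgrad{\sqrt\rho}$ $\mm$-a.e.\ whenever $\sqrt\rho$ is Sobolev along a.e.\ curve. Conversely, approximating $z\mapsto\sqrt z$ by $C^1\cap\Lip$ functions $\phi_n$ with $\phi_n'(z)\to 1/(2\sqrt z)$ on $\{z>0\}$ and using locality (Proposition~\ref{prop:locweak}), one checks that $\sqrt\rho$ is Sobolev as soon as $\int\weakgrad\rho^2/\rho\,\d\mm<\infty$, and that $\weakgrad\rho^2/\rho=4\weakgrad{\sqrt\rho}^2$ $\mm$-a.e.\ (with the convention $0/0:=0$ and both sides vanishing on $\{\rho=0\}$).

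For step (ii), I take as input the Lott--Villani inequality: for any Lipschitz probability density $\tilde\rho$ bounded below by a positive constant, $|D^-\entv|^2(\tilde\rho\mm)\le\int|D\tilde\rho|^2/\tilde\rho\,\d\mm$. This is proved via a constant speed $W_2$-geodesic joined to $\tilde\rho\mm$, its lifting to a test plan through Proposition~\ref{prop:lisini}, and a Cauchy--Schwarz inequality applied to the upper gradient inequality for $\log\tilde\rho$ along the geodesic.

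The main work is step (iii). First reduce to $\rho\ge c>0$: setting $\rho_\eps:=(\rho+\eps)/(1+\eps)$ gives probability densities bounded below with $\rho_\eps\mm\to\rho\mm$ in $W_2$, and by the chain rule together with monotone convergence $\int\weakgrad{\rho_\eps}^2/\rho_\eps\,\d\mm\to\int\weakgrad\rho^2/\rho\,\d\mm$; combined with Theorem~\ref{thm:easygf}, the bound \eqref{eq:5} for every $\rho_\eps$ implies it for $\rho$. Assuming now $\rho\ge c>0$, apply Theorem~\ref{thm:lipdense} to $h:=\sqrt\rho$, obtaining Lipschitz $h_n$ with $h_n\to h$ in $L^2$ and $|D h_n|\to\weakgrad h$ in $L^2$; replacing $h_n$ with $h_n\vee(\sqrt c/2)$ preserves both convergences and keeps $|D h_n|$ under control while ensuring $h_n\ge\sqrt c/2>0$. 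Then $\rho_n:=h_n^2/\|h_n\|_{L^2(X,\smm)}^2$ is a Lipschitz probability density bounded below with $\rho_n\to\rho$ in $L^1$ (hence $\rho_n\mm\to\rho\mm$ in $W_2$), and $|D\rho_n|^2/\rho_n = 4|D h_n|^2/\|h_n\|_{L^2(X,\smm)}^2$. Applying (ii) and letting $n\to\infty$, the right-hand side tends to $4\int\weakgrad h^2\,\d\mm=\int\weakgrad\rho^2/\rho\,\d\mm$ while the left-hand side is bounded below by $|D^-\entv|^2(\rho\mm)$ by lower semicontinuity. The principal subtlety is the positivity of the Lipschitz approximants, which is why the preliminary reduction to $\rho\ge c>0$ is necessary.
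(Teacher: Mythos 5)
Your overall route is essentially the paper's: the heart of the argument — apply Theorem~\ref{thm:lipdense} to $\sqrt\rho$, fix positivity and normalization of the Lipschitz approximants, invoke the Lipschitz-case bound, and pass to the limit using the weak lower semicontinuity of $|D^-\entv|$ coming from \eqref{eq:slopegeodetiche} — is exactly what the paper does, and your bookkeeping (pre-reduction to $\rho\ge c$ via $\rho_\eps$, truncation $h_n\vee(\sqrt c/2)$, renormalization) differs only cosmetically from the paper's device of adding vanishing positive constants and renormalizing; your step (i) via the chain rule is also fine. The one real divergence is step (ii): the paper does not cite Lott--Villani but proves the Lipschitz case itself, taking a minimizing sequence $\rho_n\mm\to\rho\mm$ for the slope, optimal plans $\ggamma_n$, the convexity bound $\ent{\rho\mm}-\ent{\rho_n\mm}\le\int\bigl(\log\rho(x)-\log\rho(y)\bigr)\,\d\ggamma_n$, Cauchy--Schwarz, and then, crucially, the two-point quotient $L(x,y)=|\log\rho(x)-\log\rho(y)|/\sfd(x,y)$, which is upper semicontinuous in $y$ for fixed $x$ with $L(x,x)=|D\log\rho|(x)$, so that Fatou's lemma closes the limit. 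Your parenthetical sketch of that input via a geodesic lifted through Proposition~\ref{prop:lisini} is shakier: in a plain $CD(K,\infty)$ space the lifting of a geodesic is in general \emph{not} a test plan (intermediate densities need not be bounded; that is precisely what the strong $CD(K,\infty)$ condition and Proposition~\ref{prop:boundlinfty} are for) — harmless here, since $\log\tilde\rho$ is Lipschitz and its slope is an upper gradient along all absolutely continuous curves — but the genuine difficulty is the final limit $\int_0^1\!\int|D\log\tilde\rho|^2\,\d\mu_t\,\d t\to\int|D\log\tilde\rho|^2\,\tilde\rho\,\d\mm$, which does not follow from mere weak convergence of the interpolants because the slope is only a Borel function; this is exactly the obstruction the paper's $L(x,y)$/Fatou device is designed to avoid. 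Since you explicitly take the Lipschitz bound as a known input (and the paper itself concedes it was known to Lott--Villani), this does not invalidate your proof, but to make it self-contained you should replace the geodesic sketch by the paper's two-point argument along optimal plans.
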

\begin{proof}
Assume at first that $\frho$ is Lipschitz with $0<c\le \frho$, and let $(\frho_n)$ be a
sequence of probability densities such that
$W_2(\frho_n\mm,\frho\mm)\to0$ and where the slope of $\entv$ at
$\frho\mm$ is attained. Choose $\ggamma_n\in\opt(\frho\mm,\frho_n\mm)$
and notice that
\begin{equation}
\label{eq:fishlip}
\begin{split}
\intX\frho\log\frho\,\d\mm&-\intX\frho_n\log\frho_n\,\d\mm\leq\intX(\frho-\frho_n)\log\frho\,\d\mm\\
&=\int\Big(\log\frho(x)-\log\frho(y)\big)\,\d\ggamma_n(x,y)\\
&\leq\sqrt{\int\frac{\big(\log\frho(x)-\log\frho(y)\big)^2}{\sfd^2(x,y)}\,\d\ggamma_n(x,y)}\sqrt{\int
\sfd^2(x,y)\,\d\ggamma_n(x,y)}\\
&= \Big(\int \Big(\int L^2(x,y)\,\d\ggamma_{n,x}(y)\Big)\frho(x)\,\d\mm(x)\Big)^{1/2}\,W_2(\frho\mm,\frho_n\mm),
\end{split}
\end{equation}
where $\ggamma_{n,x}$ is the disintegration of $\ggamma_n$ with
respect to $\frho\mm$, and $L$ is the bounded Borel function
\[
L(x,y):=\left\{
\begin{array}{ll}
\dfrac{\big|\log\frho(x)-\log\frho(y)\big|}{\sfd(x,y)},&\qquad\textrm{if }x\neq y,\\
|D\log\frho|(x)=\dfrac{|D\frho|(x)}{\frho(x)}&\qquad\textrm{if
}x= y.
\end{array}
\right.
\]
Notice that for every $x\in X$ the map $y\mapsto L(x,y)$ is
upper-semicontinuous; since $\int\big(\int
\sfd^2(x,y)\,\d\ggamma_{n,x}\big)\frho(x)\,\d\mm\to 0$ as $n\to\infty$,
we can assume without loss of generality that 
\begin{displaymath}
  \lim_{n\to\infty}\int \sfd^2(x,y)\,\d\ggamma_{n,x}(y)=0\qquad
  \text{for $\frho\mm$-a.e.\ $x\in X$}.
\end{displaymath}
Fatou's Lemma then yields
\[
\lims_{n\to\infty}\int L^2(x,y)\,\d\ggamma_n(x,y)
\leq\intX L^2(x,x)\frho(x)\,\d\mm(x)=\intX\frac{|D\frho|^2}{\frho}\,\d\mm,
\]
hence \eqref{eq:fishlip} gives
\begin{equation}
\label{eq:slopelip}
|D^-\entv|(\frho\mm)=\lims_{n\to\infty}\frac{(\ent{\frho\mm}-\ent{\frho_n\mm})^+}
{W_2(\frho\mm,\frho_n\mm)}\leq\sqrt{ \intX \frac{|D\frho|^2}\frho \,\d\mm}.
\end{equation}
We now turn to the general case. Let  $\frho$ be any
probability density Sobolev along a.e.\ curve such that $\sqrt
\frho\in D(\C)$ (otherwise is nothing
to prove).
We use Theorem~\ref{thm:lipdense} to find a
sequence of Lipschitz functions 
$(\sqrt {\frho_n})$ converging to $\sqrt \frho$ in
$L^2(X,\mm)$ and such that $|D \sqrt {\frho_n}|\to\weakgrad{\sqrt\frho}$ in
$L^2(X,\mm)$ and $\mm$-a.e.. Up to summing up positive and vanishing
constants and multiplying for suitable normalization factors, we can
assume that $0<c_n\leq\frho_n$
and $\intX\frho_n\,\d\mm=1$, for any $n\in\N$. The
conclusion follows passing to the limit in \eqref{eq:slopelip} by
taking into account the weak lower semicontinuity of
$|D^-\entv|$ (formula \eqref{eq:slopegeodetiche} and discussion
thereafter).
\end{proof}

\begin{theorem}[The heat flow as gradient flow]\label{thm:main}
Let $f_0\in L^2(X,\mm)$ be such that
$\mu_0=f_0\mm\in\Probabilities{X}$ and denote by $(f_t)$ the gradient flow
of $\C$ in $L^2(X,\mm)$ starting from $f_0$ and by $(\mu_t)$ the gradient flow
of $\entv$ in $(\prob X,W_2)$ starting from
$\mu_0$.  Then $\mu_t=f_t\mm$ for any $t\geq 0$.
\end{theorem}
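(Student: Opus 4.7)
The strategy is precisely the one announced at the start of the section: I would verify that the curve $\nu_t := f_t\mm$ is a gradient flow of $\entv$ in the EDE sense (Definition~\ref{def:ede}) starting from $\mu_0$, and then invoke the uniqueness Theorem~\ref{thm:gfent} to conclude $\nu_t=\mu_t$ for all $t\geq 0$. In view of the universal upper bound \eqref{eq:boundtuttecurve}, it is enough to prove the one-sided inequality \eqref{eq:defedegeq}, i.e.
\[
\ent{\mu_0}\;\geq\;\ent{\nu_t}+\frac12\int_0^t|\dot\nu_s|^2\,\d s+\frac12\int_0^t|D^-\entv|^2(\nu_s)\,\d s,\qquad\forall t>0.
\]

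First I would reduce to the case where $0<c\leq f_0\leq C<\infty$ $\mm$-a.e., by approximating $f_0$ by truncated and renormalized densities $f_0^n$; the $L^2$-contractivity of the gradient flow of $\C$, together with compactness of $X$ (which turns $L^2$-convergence of densities into $W_2$-convergence of the induced probability measures), and the lower semicontinuity of $\entv$ and of $|D^-\entv|$ under weak convergence, allow the EDE inequality for $(\nu_t^n)$ to pass to the limit. Under the bounded assumption, the mass preservation and maximum principle of Proposition~\ref{prop:basecal} ensure that $\nu_t\in\prob X$ with $c\leq f_t\leq C$ for every $t\geq 0$; the same proposition then yields the entropy dissipation identity
\[
\ent{\mu_0}-\ent{\nu_t}\;=\;\int_0^t\!\!\int_X\frac{\relgrad{f_s}^2}{f_s}\,\d\mm\,\d s.
\]

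The core step is to split this Fisher-information integral into two equal halves and to estimate each half by a different ingredient. Kuwada's Lemma (Lemma~\ref{le:kuwada}) gives
\[
|\dot\nu_s|^2\;\leq\;\int_X\frac{\relgrad{f_s}^2}{f_s}\,\d\mm\qquad\text{for a.e.~}s>0,
\]
while the identification $\relgrad{f_s}=\weakgrad{f_s}$ from Theorem~\ref{thm:graduguali}, combined with Fisher-bounds-slope (Lemma~\ref{prop:fishslope}, which uses the $CD(K,\infty)$ hypothesis), yields
\[
|D^-\entv|^2(\nu_s)\;\leq\;\int_X\frac{\weakgrad{f_s}^2}{f_s}\,\d\mm\;=\;\int_X\frac{\relgrad{f_s}^2}{f_s}\,\d\mm.
\]
Adding one half of each bound and integrating in $s$ produces exactly the required EDE inequality.

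The real content, and in that sense the main obstacle, is the reconciliation of the \emph{vertical} $L^2$-viewpoint (minimal relaxed slope, Cheeger energy, pointwise entropy dissipation in Proposition~\ref{prop:basecal}) with the \emph{horizontal} $W_2$-viewpoint (metric derivative, descending slope of $\entv$). This bridge is provided precisely by the identification $\relgrad{\cdot}=\weakgrad{\cdot}$ of Theorem~\ref{thm:graduguali}; once it is available, Kuwada's Lemma and Fisher-bounds-slope stand on opposite sides of the Fisher information, and the symmetric $\tfrac12+\tfrac12$ splitting converts two one-sided inequalities into the sharp EDE. The remaining technical point is the approximation argument used to remove the boundedness assumption on $f_0$, which requires checking that each of the three terms in the EDE behaves correctly along $f_0^n\to f_0$; this is routine given the compactness of $(X,\sfd)$ and the lower semicontinuity properties already recalled in the paper.
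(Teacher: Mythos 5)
Your proposal is correct and follows essentially the same route as the paper: reduce to densities bounded away from $0$ and $\infty$, use the entropy dissipation identity of Proposition~\ref{prop:basecal}, split the Fisher information in halves estimated respectively by Kuwada's Lemma~\ref{le:kuwada} and by Lemma~\ref{prop:fishslope} via the identification $\relgrad{\cdot}=\weakgrad{\cdot}$ of Theorem~\ref{thm:graduguali}, obtain \eqref{eq:defedegeq}, and conclude by the uniqueness Theorem~\ref{thm:gfent}. The only cosmetic difference is that you perform the truncation/approximation step first while the paper does it last; the limiting argument (lower semicontinuity of entropy, action and slope under $W_2$-convergence) is the same.
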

\begin{proof}
Thanks to the uniqueness result of Theorem~\ref{thm:gfent}, it is
sufficient to prove that $(f_t\mm)$ satisfies the Energy Dissipation
Equality for $\entv$ in $(\prob X,W_2)$. We assume first that
$0<c\leq f_0\leq C<\infty$ $\mm$-a.e.~in $X$, so that the maximum
principle (Proposition~\ref{prop:basecal}) ensures $0<c\leq f_t\leq
C<\infty$ for any $t>0$. By Proposition~\ref{prop:basecal} we know
that $t\mapsto\entr{f_t\mm}\mm$ is absolutely continuous with
derivative equal to $-\intX\frac{\weakgrad {f_t}^2}{f_t}\,\d\mm$.
Lemma~\ref{le:kuwada} ensures that $t\mapsto f_t\mm$ is absolutely
continuous w.r.t. $W_2$ with squared metric speed bounded by
$\intX\frac{\weakgrad {f_t}^2}{f_t}\,\d\mm$, so that taking into
account Lemma~\ref{prop:fishslope} we get
\[
\entr{f_0\mm}\mm\geq\entr{f_t\mm}\mm+
\frac12\int_0^t|\dot{f_s\mm}|^2\,\d s+\frac12\int_0^t|D^-\entv|^2(f_s\mm)\,\d s,
\]
which, together with \eqref{eq:boundtuttecurve}, ensures the thesis.

For the general case we argue by approximation, considering
$f^n_0:=c_n\min\{n,\max\{f_0,1/n\}\}$, $c_n$ being the normalizing
constant, and the corresponding gradient flow $(f^n_t)$ of $\C$. The
fact that $f^n_0\to f_0$ in $L^2(X,\mm)$ and the convexity of $\C$
implies that $f^n_t\to f_t$ in $L^2(X,\mm)$ for any $t>0$. In
particular, $W_2(f^n_t\mm,f_t\mm)\to 0$ as $n\to\infty$ for
every $t$ (because convergence w.r.t. $W_2$ is equivalent to weak
convergence of measures).

Now notice that we know that
\[
\ent{f^n_0\mm}=\ent{f^n_t}+\frac12\int_0^t|\dot{f^n_s\mm}|^2\,\d s
+\frac12\int_0^t|D^-\entv|^2(f^n_s)\,\d s,\qquad\forall t>0.
\]
Furthermore, it is immediate to check that
$\ent{f^n_0\mm}\to\ent{f_0\mm}$ as $n\to\infty$. The pointwise
convergence of $f^n_t\mm$ to $f_t\mm$ w.r.t. $W_2$ easily yields
that the terms on the right hand side of the last equation are lower
semicontinuous when $n\to\infty$ (recall Theorem~\ref{thm:easygf}
for the slope). Thus it holds
\[
\ent{f_0\mm}\geq\ent{f_t}+\frac12\int_0^t|\dot{f_s\mm}|^2\,\d s+
\frac12\int_0^t|D^-\entv|^2(f_s)\,\d s,\qquad\forall t>0,
\]
which, by \eqref{eq:defedegeq}, is the thesis.

\noindent We know, by Theorem~\ref{thm:gfent}, that there is at most
a gradient flow starting from $\mu_0$. We also know that a gradient
flow $f_t'$ of $\C$ starting from $f_0$ exists, and part $(i)$ gives
that $\mu_t':=f_t'\mm$ is a gradient flow of ${\rm Ent}_\smm$. The
uniqueness of gradient flows gives $\mu_t=\mu_t'$ for all $t\geq 0$.
\end{proof}
As a consequence of the previous Theorem \ref{thm:main} it would not
be difficult to prove that the inequality \eqref{eq:5} is in fact an
identity: if $(X,\sfd,\mm)$ is a compact and normalized $CD(K,\infty)$
space, then $|D^-\entv|(\frho\mm)<\infty$  if and only if the
probability density $\frho$ 
is Sobolev along a.e.\ curve and $\sqrt \frho\in D(\C)$; in this case
\begin{equation}
\label{eq:slopesharp}
  |D^-\entv|^2(\frho\mm)=
\intX\frac{\weakgrad\frho^2}{\frho}\,\d\mm =4\intX \weakgrad{\sqrt \frho}^2\,\d\mm.
\end{equation}

\section{A metric Brenier theorem}

In this section we state and prove the metric Brenier theorem in
$CD(K,\infty)$ spaces we announced in the introduction. It was
recently proven in \cite{Gigli12} that under an
additional non-branching assumption one can really recover an
optimal transport map, see also \cite{Ambrosio-Rajala12} for related results, obtained under stronger
non-branching assumptions and weaker convexity assumptions.

\begin{definition}[Strong $CD(K,\infty)$ spaces]\label{def:strongcd}
We say that a compact normalized metric measure space $(X,\sfd,\mm)$
is a strong $CD(K,\infty)$ space if for any $\mu_0,\,\mu_1\in
D(\entv) $  there exists $\ppi\in\gopt(\mu_0,\mu_1)$ with the following
property. For any bounded Borel function $F:\geo(X)\to[0,\infty)$
such that $\int F\,\d\ppi=1$, it holds
\[
\entr{\mu^F_t}\mm\leq(1-t)\entr{\mu^F_0}\mm+t\entr{\mu^F_1}\mm-\frac{K}{2}t(1-t)W_2^2(\mu_0^F,\mu_1^F),
\]
where $\mu^F_t:=(\e_t)_\sharp(F\ppi)$, for any $t\in[0,1]$.
\end{definition}

Thus, the difference between strong $CD(K,\infty)$ spaces and
standard $CD(K,\infty)$ ones is the fact that geodesic convexity is
required along \emph{all} geodesics induced by the weighted plans
$F\ppi$, rather than the one induced by $\ppi$ only. Notice that the
necessary and sufficient optimality conditions ensure that $(\e_0,\e_1)_\sharp\ppi$ is
concentrated on a $c$-monotone set, hence $
(\e_0,\e_1)_\sharp (F\ppi)$ has the same
property and it is optimal, relative to its marginals. (We remark that recent results of Rajala \cite{Rajala11} suggest that it is not necessary to assume this stronger convexity to get the metric Brenier theorem - and hence not even a treatable notion of spaces with Riemannian Ricci curvature bounded from below - see \cite{AGSMR12} for progresses in this direction)

It is not clear to us whether the notion of being strong
$CD(K,\infty)$ is stable or not w.r.t. measured Gromov-Hausdorff
convergence and, as such, it should be handled with care. The
importance of strong $CD(K,\infty)$ bounds relies on the fact that
on these spaces geodesic interpolation between bounded probability
densities is made of bounded densities as well, thus granting the
existence of many test plans.

Notice that non-branching $CD(K,\infty)$ spaces are always strong
$CD(K,\infty)$ spaces, indeed let $\mu_0,\,\mu_1\in D(\entv)$ and
pick $\ppi\in\gopt(\mu_0,\mu_1)$ such that $\entv$ is $K$-convex
along $((\e_t)_\sharp\ppi)$. From the non-branching hypothesis it
follows that for $F$ as in Definition~\ref{def:strongcd} there
exists a unique element in $\gopt(\mu^F_t,\mu^F_1)$ (resp. in
$\gopt(\mu^F_t,\mu^F_0)$). Also, since $F$ is bounded, from
$\mu_t\in D(\entv)$ we deduce $\mu^F_t\in D(\entv)$. Hence the map
$t\mapsto\entr{\mu^F_t}{\mm}$ is $K$-convex and bounded on
$[\eps,1]$ and on $[0,1-\eps]$ for all $\eps\in (0,1)$, and
therefore it is $K$-convex on $[0,1]$.

\begin{proposition}[Bound on geodesic interpolant]\label{prop:boundlinfty}
Let $(X,\sfd,\mm)$ be a strong $CD(K,\infty)$ space and let
$\mu_0,\,\mu_1\in\prob X$ be with bounded densities. Then there
exists a {\em test} plan $\ppi\in\gopt(\mu_0,\mu_1)$ so that 
the induced geodesic $\mu_t=(\e_t)_\sharp\ppi$ connecting $\mu_0$ to $\mu_1$ is 
made of measures with
uniformly bounded densities.
\end{proposition}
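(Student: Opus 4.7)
The plan is to apply the strong $CD(K,\infty)$ convexity inequality to a family of re-weightings localized at time $t$, in order to produce a uniform $L^\infty$ bound on the densities of the interpolation. First I would pick $\ppi\in\gopt(\mu_0,\mu_1)$ as in Definition~\ref{def:strongcd}, set $\mu_t:=(\e_t)_\sharp\ppi$ and $\rho_t:=\d\mu_t/\d\mm$. Since $\ppi$ is supported on constant speed geodesics, the bound $\iint_0^1|\dot\gamma_s|^2\,\d s\,\d\ppi=W_2^2(\mu_0,\mu_1)<\infty$ is automatic; thus the only missing ingredient to conclude that $\ppi$ is a test plan in the sense of Definition~\ref{def:testplans} is a $t$-independent bound on $\|\rho_t\|_\infty$.

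The key construction is the localization at time $t$: for every Borel set $A\subset X$ with $\alpha:=\mu_t(A)>0$, take
\[
F:=\frac{\mathbf{1}_A\circ\e_t}{\alpha},
\]
which is a bounded Borel function on $\geo(X)$ with $\int F\,\d\ppi=1$. A short disintegration computation gives $\mu^F_t=\mu_t\restr{A}/\alpha$ and shows that for $i=0,1$ the measure $\mu^F_i$ is absolutely continuous with respect to $\mu_i$ with Radon--Nikodym derivative bounded by $1/\alpha$; hence its density with respect to $\mm$ is bounded by $\|\rho_i\|_\infty/\alpha$, which in turn yields
\[
\entr{\mu^F_i}{\mm}\leq\log\|\rho_i\|_\infty-\log\alpha,\qquad
\entr{\mu^F_t}{\mm}=\frac{1}{\alpha}\int_A\rho_t\log\rho_t\,\d\mm-\log\alpha.
\]

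Plugging these estimates into the strong $CD(K,\infty)$ inequality and bounding the distortion term via $-\tfrac K2 t(1-t)W_2^2(\mu^F_0,\mu^F_1)\le \tfrac{K^-}{8}\diam(X)^2$ (using compactness of $X$), the $\log\alpha$ contributions cancel on both sides and one is left with
\[
\int_A\rho_t\log\rho_t\,\d\mm\leq\Big((1-t)\log\|\rho_0\|_\infty+t\log\|\rho_1\|_\infty+\tfrac{K^-}{8}\diam(X)^2\Big)\alpha.
\]
Specializing this to $A:=\{\rho_t>M\}$ with $M\geq 1$, and using the elementary bound $\rho_t\log\rho_t\geq(\log M)\rho_t$ on $A$, I would deduce that if $\alpha>0$ then $\log M\leq (1-t)\log\|\rho_0\|_\infty+t\log\|\rho_1\|_\infty+K^-\diam(X)^2/8$; taking $M^\star:=\max\{\|\rho_0\|_\infty,\|\rho_1\|_\infty\}\exp(K^-\diam(X)^2/8)$ then forces $\mu_t(\{\rho_t>M^\star\})=0$ for every $t\in[0,1]$, which is precisely the required uniform bound. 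The main obstacle, and the reason why plain $CD(K,\infty)$ would not suffice, is exactly the need to localize around a set singled out at the intermediate time $t$: only the strong convexity assumption ensures that the entropy inequality survives along the geodesic generated by the reweighted plan $F\ppi$. Once this is granted, the remainder is a short explicit computation.
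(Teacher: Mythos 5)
Your argument is correct and is essentially the paper's own proof: your reweighting $F=\mathbf{1}_A\circ\e_t/\alpha$ is exactly the paper's restricted plan $c\,\ppi\restr{\e_{t_0}^{-1}(A)}$ (used there by contradiction, with $A$ a superlevel set of the density), the endpoint entropy bounds and the distortion estimate are the same, and you land on the same constant $\max\{\|\rho_0\|_\infty,\|\rho_1\|_\infty\}e^{K^-\diam(X)^2/8}$. The only point you leave implicit is that $\mu_t\ll\mm$ (needed to speak of $\rho_t$), which follows at once by applying the strong $CD(K,\infty)$ inequality with $F\equiv 1$, just as the paper dismisses it as obvious from geodesic convexity.
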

\begin{proof}
Let $M$ be an upper bound on the densities of $\mu_0,\,\mu_1$,
$\ppi\in\gopt(\mu_0,\mu_1)$ be a plan which satisfies the
assumptions of Definition~\ref{def:strongcd} and
$\mu_t:=(\e_t)_\sharp\ppi$. We claim that the measures $\mu_t$ have
uniformly bounded densities. The fact that $\mu_t\ll\mm$ is obvious
by geodesic convexity, so let $\frho_t$ be the density of $\mu_t$ and
assume by contradiction that for some $t_0\in[0,1]$ it holds
\begin{equation}
\label{eq:perassurdo} \frho_{t_0}(x)> Me^{K^-{\rm
D}^2/8},\qquad\forall x\in A,
\end{equation}
where $\mm(A)>0$ and ${\rm D}$ is the diameter of $X$.
Define $\tilde\ppi:=c\ppi\restr{\e_{t_0}^{-1}(A)}$, where $c$ is the normalizing constant
(notice that $\tilde\ppi$ is well defined, because $\ppi(\e_{t_0}^{-1}(A))=\mu_{t_0}(A)>0$)
and observe that the density of $\tilde\ppi$ w.r.t. $\ppi$ is bounded.
Let $\tilde\mu_t:=(\e_t)_\sharp\tilde\ppi$ and $\tilde\frho_t$ its density w.r.t. $\mm$.
From \eqref{eq:perassurdo} we get $\tilde\frho_{t_0}=c\frho_{t_0}$ on $A$ and $\tilde\frho_{t_0}=0$ on $X\setminus A$,
hence
\begin{equation}
\label{eq:boundentrhot}
\entr{\tilde\mu_{t_0}}\mm=\int\log(\tilde\frho_{t_0}\circ\e_{t_0})\,\d\ppi>\log c+\log M+\frac{K^-}{8}{\rm D}^2.
\end{equation}
On the other hand, we have $\tilde\frho_0\leq c\frho_0\leq cM$ and
$\tilde\frho_1\leq c\frho_1\leq cM$ and thus
\begin{equation}
\label{eq:boundentrho01}
\entr{\tilde\mu_i}\mm=\int\log(\tilde\frho_{i}\circ\e_i)\,\d\tilde\ppi\leq
\log c+\log M,\qquad i=0,1.
\end{equation}
Finally, it certainly holds $W_2^2(\tilde\mu_0,\tilde\mu_1)\leq {\rm
D}^2$, so that \eqref{eq:boundentrhot} and \eqref{eq:boundentrho01}
contradict the $K$-convexity of $\entv$ along $(\tilde\mu_t)$. Hence
\eqref{eq:perassurdo} is false and the $\frho_t$'s are uniformly
bounded.
\end{proof}
An important consequence of this uniform bound is the following metric
version of Brenier's theorem. 
\begin{theorem}[A metric Brenier theorem]\label{prop:potweak} Let
  $(X,\sfd,\mm)$ be a strong $CD(K,\infty)$ space,
  let $\frho_0,\,\frho_1$ be 
probability densities and $\varphi$ any
Kantorovich potential for the couple $(\frho_0\mm,\frho_1\mm)$. Then
for every $\ppi\in\gopt(\frho_0\mm,\frho_1\mm)$ it holds
\begin{equation}
\sfd(\gamma_0,\gamma_1)=\weakgrad\varphi(\gamma_0)=|D^+\varphi|(\gamma_0),\qquad\text{for
$\ppi$-a.e.~$\gamma$.}\label{eq:8}
\end{equation}
In particular,
\[
W_2^2(\frho_0\mm,\frho_1\mm)=\int_X\relgrad\varphi^2\,\frho_0\,\d\mm.
\]
If moreover $\frho_0,\frho_1\in L^\infty(X,\mm)$ and $\ppi$ is a
test plan (such a plan exists thanks to Proposition
\ref{prop:boundlinfty}) then
\begin{equation}
  \label{eq:6}
  \lim_{t\downarrow0}
  \frac{\varphi(\gamma_0)-\varphi(\gamma_t)}{\sfd(\gamma_0,\gamma_t)}=|D^+\varphi|(\gamma_0)\quad\text{in }L^2(\mathrm{Geo}(X),\ppi).
\end{equation}
\end{theorem}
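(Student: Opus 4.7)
The plan is to establish the pointwise identity $|D^+\varphi|(\gamma_0)=\sfd(\gamma_0,\gamma_1)$ first using only the $c$-concavity of $\varphi$, then upgrade to the identification with $\weakgrad\varphi$ by coupling the weak upper gradient inequality along a test plan (whose existence rests on the strong $CD(K,\infty)$ assumption via Proposition~\ref{prop:boundlinfty}) with the $c$-concavity estimates, and finally extract the $L^2$-convergence by matching $L^2$-norms with the pointwise limit.

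For the pointwise identity, fix $\ppi\in\gopt(\frho_0\mm,\frho_1\mm)$. Since $(\gamma_0,\gamma_1)\in\partial^c\varphi$ for $\ppi$-a.e.\ $\gamma$, the $c$-concavity inequality $\varphi(z)-\varphi(\gamma_0)\leq\tfrac12\bigl(\sfd^2(z,\gamma_1)-\sfd^2(\gamma_0,\gamma_1)\bigr)$, combined with the triangle-inequality bound $\sfd^2(z,\gamma_1)-\sfd^2(\gamma_0,\gamma_1)\leq 2\sfd(\gamma_0,\gamma_1)\sfd(\gamma_0,z)+\sfd^2(\gamma_0,z)$, yields upon division by $\sfd(\gamma_0,z)$ and $z\to\gamma_0$ the bound $|D^+\varphi|(\gamma_0)\leq\sfd(\gamma_0,\gamma_1)$. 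The matching lower bound comes from specializing $z=\gamma_t$ in the same $c$-concavity estimate to obtain $\varphi(\gamma_0)-\varphi(\gamma_t)\geq\tfrac12(1-(1-t)^2)\sfd^2(\gamma_0,\gamma_1)$, then dividing by $\sfd(\gamma_0,\gamma_t)=t\,\sfd(\gamma_0,\gamma_1)$ and sending $t\downarrow 0$.

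To pass from $|D^+\varphi|$ to $\weakgrad\varphi$, I would invoke Proposition~\ref{prop:boundlinfty} to select a test plan $\tilde\ppi\in\gopt(\frho_0\mm,\frho_1\mm)$ whose interpolant densities satisfy $\rho_r\leq M$. The inequality $\weakgrad\varphi\leq|D^+\varphi|$ holds $\mm$-a.e.\ since $|D^+\varphi|$ is an upper gradient of the Lipschitz function $\varphi$; integrating the pointwise identity of step~1 against $\mu_0=(\e_0)_\sharp\ppi$ yields $\int_X\weakgrad\varphi^2\,\d\mu_0\leq W_2^2(\mu_0,\mu_1)$. For the reverse, apply the weak upper gradient inequality on the restricted test plan $({\rm restr}_0^t)_\sharp\tilde\ppi$ to get $\varphi(\gamma_0)-\varphi(\gamma_t)\leq\sfd(\gamma_0,\gamma_1)\int_0^t\weakgrad\varphi(\gamma_r)\,\d r$ for $\tilde\ppi$-a.e.\ $\gamma$; combine with the $c$-concavity lower bound, integrate against $\tilde\ppi$, apply Cauchy--Schwarz and Jensen, and arrive at $\tfrac{(2-t)^2}{4}W_2^2(\mu_0,\mu_1)\leq\tfrac1t\int_0^t\int_X\weakgrad\varphi^2\,\rho_r\,\d\mm\,\d r$. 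Sending $t\downarrow 0$ and using the narrow continuity of $r\mapsto\mu_r$ together with the bound $\rho_r\leq M$ (which upgrades narrow convergence to weak-$*$ convergence in $L^\infty$, so that $\int f\rho_r\,\d\mm\to\int f\rho_0\,\d\mm$ for every $f\in L^1(\mm)$) yields $W_2^2(\mu_0,\mu_1)\leq\int_X\weakgrad\varphi^2\,\d\mu_0$. The two inequalities force $\weakgrad\varphi=|D^+\varphi|$ $\mu_0$-a.e., and since $(\e_0)_\sharp\ppi=\mu_0$ for every optimal $\ppi$ this transfers to the stated $\ppi$-a.e.\ equality and produces the integral identity.

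For the $L^2$-convergence in \eqref{eq:6}, step~1 gives the pointwise bound $\liminf_{t\downarrow 0}\tfrac{\varphi(\gamma_0)-\varphi(\gamma_t)}{\sfd(\gamma_0,\gamma_t)}\geq|D^+\varphi|(\gamma_0)$, while the upper gradient inequality plus Jensen supply $\int\bigl(\tfrac{\varphi(\gamma_0)-\varphi(\gamma_t)}{\sfd(\gamma_0,\gamma_t)}\bigr)^2\d\ppi\leq\tfrac1t\int_0^t\int_X\weakgrad\varphi^2\,\rho_r\,\d\mm\,\d r\to\int|D^+\varphi|^2\,\d\ppi$. Combined with the uniform bound $\bigl|\tfrac{\varphi(\gamma_0)-\varphi(\gamma_t)}{\sfd(\gamma_0,\gamma_t)}\bigr|\leq\Lip(\varphi)$ and the matching of $L^2$-norms with the candidate limit, strong $L^2$-convergence follows from a standard Fatou argument on the cross term in $\|f_t-f\|_{L^2}^2$. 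The principal obstacle throughout is coupling the horizontal (weak upper gradient) estimate with the vertical (pointwise slope) estimate at the initial time $r=0$; the strong $CD(K,\infty)$ assumption, through Proposition~\ref{prop:boundlinfty}, is precisely what enables the required exchange of limit and integral.
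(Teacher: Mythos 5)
Your route is the paper's: the $c$-concavity estimate gives $|D^+\varphi|(\gamma_0)\leq\sfd(\gamma_0,\gamma_1)$ for $\ppi$-a.e.\ $\gamma$, the horizontal estimate along the test plan of Proposition~\ref{prop:boundlinfty} gives $W_2^2(\frho_0\mm,\frho_1\mm)\leq\intX\weakgrad\varphi^2\frho_0\,\d\mm$, and \eqref{eq:6} follows by expanding the square and Fatou. But two steps, as written, have genuine gaps. First, your claimed direct pointwise proof of $|D^+\varphi|(\gamma_0)=\sfd(\gamma_0,\gamma_1)$ fails in the lower-bound half: evaluating the $c$-concavity inequality at $z=\gamma_t$ shows that $\varphi$ \emph{decreases} along the geodesic, so it bounds from below the quotients $\frac{\varphi(\gamma_0)-\varphi(\gamma_t)}{\sfd(\gamma_0,\gamma_t)}$, i.e.\ the descending slope $|D^-\varphi|(\gamma_0)$; the positive parts $[\varphi(\gamma_t)-\varphi(\gamma_0)]^+$ vanish for these competitors, so nothing is learned about the ascending slope $|D^+\varphi|$. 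Indeed no pointwise argument is available: \eqref{eq:8} holds only $\ppi$-a.e.\ and must be extracted, as in the paper, from equality throughout the chain $\intX\weakgrad\varphi^2\frho_0\,\d\mm\leq\int|D^+\varphi|^2(\gamma_0)\,\d\ppi\leq\int\sfd^2(\gamma_0,\gamma_1)\,\d\ppi=W_2^2(\frho_0\mm,\frho_1\mm)\leq\intX\weakgrad\varphi^2\frho_0\,\d\mm$. Your step~2 actually contains all the ingredients for this squeeze and never uses the flawed lower bound, so the defect is reparable; but the proof as structured asserts the key identity before it is available, and the liminf in your step~3 should be stated as $\liminf_{t\downarrow0}\frac{\varphi(\gamma_0)-\varphi(\gamma_t)}{\sfd(\gamma_0,\gamma_t)}\geq\sfd(\gamma_0,\gamma_1)$ and converted into $|D^+\varphi|(\gamma_0)$ only after \eqref{eq:8} has been proved.

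Second, the first part of the theorem concerns \emph{arbitrary} probability densities, whereas your step~2 invokes Proposition~\ref{prop:boundlinfty}, which requires $\frho_0,\frho_1\in L^\infty(X,\mm)$: for unbounded densities no optimal test plan with uniformly bounded interpolant densities is provided, and your passage to the limit via weak$^*$ convergence of $\rho_r$ in $L^\infty$ has no basis. The paper closes this by a truncation argument: apply the bounded case to $\ppi^n:=c_n\,\ppi$ restricted to $\{\gamma:\frho_0(\gamma_0)+\frho_1(\gamma_1)\leq n\}$, noting that $\varphi$ is still a Kantorovich potential for the truncated (bounded) marginals and that $\ppi^n$ is still an optimal geodesic plan, and then let $n\to\infty$. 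Without such a reduction your argument proves \eqref{eq:8} and the identity $W_2^2(\frho_0\mm,\frho_1\mm)=\intX\relgrad\varphi^2\frho_0\,\d\mm$ only under the additional assumption that $\frho_0,\frho_1$ are bounded.
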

\begin{proof}
$\varphi$ is Lipschitz, therefore $|D^+\varphi|$ is an upper
gradient of $\varphi$, and hence
$\weakgrad\varphi\leq|D^+\varphi|$ $\mm$-a.e.. Now fix $x\in X$
and pick any $y\in\partial^c\varphi(x)$. From the $c$-concavity of
$\varphi$ we get
\[
\begin{split}
\varphi(x)&=\frac{\sfd^2(x,y)}2-\varphi^c(y),\\
\varphi(z)&\leq\frac{\sfd^2(z,y)}2-\varphi^c(y)\qquad\forall z\in X.
\end{split}
\]
Therefore
\[
\varphi(z)-\varphi(x)\leq\frac{\sfd^2(z,y)}2-\frac{\sfd^2(x,y)}2\leq \sfd(z,x)\frac{\sfd(z,y)+\sfd(x,y)}2.
\]
Dividing by $\sfd(x,z)$ and letting $z\to x$, by the arbitrariness
of $y\in\partial^c\varphi(x)$ and the fact that
$\supp((\e_0,\e_1)_\sharp\ppi)\subset\partial^c\varphi$ we get
\[
|D^+\varphi|(\gamma_0)\leq
\min_{y\in\partial^c\varphi(\gamma_0)}\sfd(\gamma_0,y)
\leq\sfd(\gamma_0,\gamma_1)\qquad\text{for $\ppi$-a.e.~$\gamma$.}
\]
Since
$$
\intX\weakgrad\varphi^2\frho_0\,\d\mm\leq
\int|D^+\varphi|^2(\gamma_0)\,\d\ppi \quad\text{and}\quad
\int\sfd^2(\gamma_0,\gamma_1)\,\d\ppi(\gamma)=W_2^2(\frho_0\mm,\frho_1\mm),
$$
to conclude it is sufficient to prove that
\begin{equation}
\label{eq:perbre}
W_2^2(\frho_0\mm,\frho_1\mm)\leq\intX\weakgrad\varphi^2\frho_0\,\d\mm.
\end{equation}
Now assume that $\frho_0$ and $\frho_1$ are bounded from above and let
$\tilde\ppi\in\gopt(\frho_0\mm,\frho_1\mm)$ be a test plan (such
$\tilde\ppi$ exists thanks to Proposition~\ref{prop:boundlinfty}).
Since $\varphi$ is a Kantorovich potential and
$(\e_0,\e_1)_\sharp\tilde\ppi$ is optimal, it holds $\gamma_1\in
\partial^c\varphi(\gamma_0)$ for any $\gamma\in\supp(\tilde\ppi)$.
Hence arguing as before we get
\begin{equation}
\varphi(\gamma_0)-\varphi(\gamma_t)\geq\frac{\sfd^2(\gamma_0,\gamma_1)}2-\frac{\sfd^2(\gamma_t,\gamma_1)}2=
\sfd^2(\gamma_0,\gamma_1)\big(t-t^2/2\big).\label{eq:7}
\end{equation}
Dividing by $\sfd(\gamma_0,\gamma_t)=t\sfd(\gamma_0,\gamma_1)$,
squaring and integrating w.r.t. $\tilde\ppi$ we obtain
\begin{equation}
\label{eq:prelimite}
\limi_{t\downarrow0}\int\left(\frac{\varphi(\gamma_0)-\varphi(\gamma_t)}
{\sfd(\gamma_0,\gamma_t)}\right)^2\,\d\tilde\ppi(\gamma)\geq
\int\sfd^2(\gamma_0,\gamma_1)\,\d\tilde\ppi(\gamma)
=W_2^2(\frho_0\mm,\frho_1\mm).
\end{equation}
Using Remark~\ref{re:restr} and the fact that $\tilde\ppi$ is a test
plan we have
\begin{equation}
\label{eq:intermediobrenier}
\begin{split}
\int\left(\frac{\varphi(\gamma_0)-\varphi(\gamma_t)}{\sfd(\gamma_0,\gamma_t)}\right)^2\,\d\tilde\ppi(\gamma)
&\leq\int\frac1{t^2}\left(\int_0^t\weakgrad\varphi(\gamma_s)\,\d s\right)^2\,\d\tilde\ppi(\gamma)\leq
\frac1t\iint_0^t\weakgrad\varphi^2(\gamma_s)\,\d s\,\d\tilde\ppi(\gamma)\\
&=\frac1t\iint_0^t\weakgrad\varphi^2\,\d s\,\d(\e_t)_\sharp\tilde\ppi
=\frac1t\iint_0^t\weakgrad\varphi^2\frho_s\,\d s\,\d\mm,
\end{split}
\end{equation}
where $\frho_s$ is the density of $(\e_s)_\sharp\tilde\ppi$. Since
$(\e_t)_\sharp\tilde\ppi$ weakly converges to $(\e_0)_\sharp\tilde\ppi$ as
$t\downarrow 0$ and $\entr{(\e_t)_\sharp\tilde\ppi}\mm$ is uniformly bounded
(by the $K$-geodesic convexity), we conclude that $\frho_t\to\frho_0$ weakly in $L^1(X,\mm)$ and
since $\weakgrad\varphi\in L^\infty(X,\mm)$ we have
\begin{equation}
\label{eq:finebrenier}
\lim_{t\downarrow 0}\frac1t\iint_0^t\weakgrad\varphi^2\frho_s\,\d s\,\d\mm=\intX\weakgrad\varphi^2\frho_0\,\d\mm.
\end{equation}
Equations \eqref{eq:prelimite}, \eqref{eq:intermediobrenier} and \eqref{eq:finebrenier} yield \eqref{eq:perbre}.

In order to prove \eqref{eq:perbre} in the general case of possibly
unbounded densities, let us 
fix a Kantorovich potential
$\varphi$,  $\ppi\in\gopt(\frho_0\mm,\frho_1\mm)$ and for $n\in\N$
define
$\ppi^n:=c_n\ppi\restr{\{\gamma:\frho_0(\gamma_0)+\frho_1(\gamma_1)\leq
n\}}$, $c_n\to 1$ being the normalization constant. Then
$\ppi^n\in\gopt(\frho^n_0\mm,\frho^n_1\mm)$, where
$\frho^n_i:=(\e_i)_\sharp\ppi^n$, $\varphi$ is a Kantorovich
potential for $(\frho^n_0\mm,\frho^n_1\mm)$ and $\frho^n_0,\frho^n_1\in
L^\infty(X,\mm)$. Thus from what we just proved we know that it
holds
\[
\sfd(\gamma_0,\gamma_1)=\weakgrad\varphi(\gamma_0)=|D^+\varphi|(\gamma_0),\qquad
\qquad\text{for $\ppi^n$-a.e.~$\gamma$.}
\]
Letting $n\to\infty$ we conclude.

Concerning \eqref{eq:6},  we can choose $\tilde\ppi=\ppi$ and
obtain by \eqref{eq:7} and \eqref{eq:8}
\begin{displaymath}
  \frac{\varphi(\gamma_0)-\varphi(\gamma_t)}{\sfd(\gamma_0,\gamma_t)}\ge0,\qquad
  \liminf_{t\downarrow0}\frac{\varphi(\gamma_0)-\varphi(\gamma_t)}{\sfd(\gamma_0,\gamma_t)}\ge
  |D^+\varphi|(\gamma_0)\quad
  \text{for $\ppi$-a.e.\ $\gamma$.}    
\end{displaymath}
On the other hand \eqref{eq:intermediobrenier} and \eqref{eq:finebrenier} yield
\begin{displaymath}
  \limsup_{t\downarrow0}\int
  \Big(\frac{\varphi(\gamma_0)-\varphi(\gamma_t)}{\sfd(\gamma_0,\gamma_t)}\Big)^2\,\d\ppi(\gamma)\le 
  \int |D^+\varphi|^2(\gamma_0)\,\d\ppi(\gamma),
\end{displaymath}
so that, by expanding the square and applying Fatou's Lemma, we obtain
\begin{align*}
  \limsup_{t\downarrow0}&\int
  \Big(\frac{\varphi(\gamma_0)-\varphi(\gamma_t)}{\sfd(\gamma_0,\gamma_t)}-|D^+\varphi|(\gamma_0)\Big)^2\,\d\ppi(\gamma)\le0.
\end{align*}
\end{proof}
\section{More on calculus on compact $CD(K,\infty)$ spaces}\label{se:calculus}

\subsection{On horizontal and vertical derivatives again}

Aim of this subsection is to prove another deep relation between
``horizontal'' and ``vertical'' derivation, which will allow to
compare the derivative of the squared Wasserstein distance along the
heat flow with the derivative of the relative entropy along a
geodesic (see the next subsection). This will be key in order to
understand the properties of spaces with \emph{Riemannian} Ricci
curvature bounded from below, illustrated in the last section.

In order to understand the geometric point, consider the following simple example.

\begin{example}{\rm
Let $\|\cdot\|$ be a smooth, strictly convex norm on $\R^d$ and let
$\|\cdot\|_*$ be the dual norm. Denoting by $\langle
\cdot,\cdot\rangle$ the canonical duality from $(\R^d)^*\times\R^d$
into $\R$, let $\mathcal L$ be the duality map from
$(\R^d,\|\cdot\|)$ to $((\R^d)^*,\|\cdot\|_*)$, characterized by
$$
\langle\mathcal L(u),u\rangle=\|\mathcal
L(u)\|_*\|u\|\quad\text{and}\quad \|\mathcal L(u)\|_*=\|u\|
\qquad\forall u\in \R^d,
$$
and let $\mathcal L^*$ be its inverse, equally characterized by
$$
\langle v,\mathcal L^*(v)\rangle=\|v\|_*\|\mathcal
L^*(v)\|\quad\text{and}\quad \|\mathcal
L^*(v)\|=\|v\|_*\qquad\forall v\in (\R^d)^*.
$$
Using the fact that $\epsilon\mapsto \|u\|\|u+\epsilon u'\|-\langle
\mathcal L u,u+\epsilon u'\rangle$ attains its minimum at
$\epsilon=0$ and the analogous relation for $\mathcal L^*$, one
obtains the useful relations
\begin{equation}\label{eq:normasquare}
\langle\mathcal L(u),u'\rangle=\frac 12\d_u\|\cdot\|^2(u'),\qquad
\langle v',\mathcal L^*(v)\rangle=\frac 12\d_v\|\cdot\|_*^2(v').
\end{equation}
For a smooth map $f:\R^d\to\R$ its differential $\d_xf$ at any point
$x$ is intrinsically defined as cotangent vector, namely as an
element of $(\R^d)^*$. To define the gradient $\nabla f(x)\in\R^d$
(which is a tangent vector), the norm comes into play via the
formula $\nabla f(x):=\mathcal L^*(\d_xf)$. Now, given two smooth
functions $f,\,g$, the real number $\d_xf(\nabla g(x))$ is well
defined as the application of the cotangent vector $\d_xf$ to the
tangent vector $\nabla g(x)$. 

What we want to point out, is that
there are two very different ways of obtaining $\d_xf(\nabla g(x))$
from a derivation. The first one,
which is usually taken as the definition of
$\d_xf(\nabla g(x))$, is the ``horizontal derivative'':
\begin{equation}
\label{eq:hor}
\langle \d_x f,\nabla g\rangle =\d_xf(\nabla g(x))=\lim_{t\to
0}\frac{f(x+t\nabla g(x))-f(x)}t.
\end{equation} 
The second one is the ``vertical derivative'':
\begin{equation}
\label{eq:ver}
Df(\nabla g)(x)=\lim_{\eps\to0}\frac{\frac12\|\d_x(g+\eps f)\|^2_*-\frac12\|\d_x g\|_*^2(x)}{\eps}.
\end{equation}
It is not difficult to check that \eqref{eq:ver} is consistent with
\eqref{eq:hor}: indeed (omitting the $x$ dependence), recalling the
second identity of \eqref{eq:normasquare}, we have
\begin{displaymath}
 \|\d g+\eps \d f\|^2_*=\|\d g\|_*^2+
2\eps \langle \mathcal L^*(\d g),\d f\rangle+o(\eps)=
\|\nabla g\|^2+2\eps \langle \nabla g,\d f\rangle+o(\eps).
\end{displaymath}
}
\fr\end{example}
The point is that the equality between the right hand sides of
formulas \eqref{eq:ver} and \eqref{eq:hor} extends to a genuine
metric setting. In the following lemma (where the plan $\ppi$ plays
the role of $-\nabla g$) we prove one inequality, but we remark that
``playing with signs'' it is possible to obtain an analogous
inequality with $\leq$ in place of $\geq$.

\begin{lemma}[Horizontal and vertical derivatives]\label{le:horver}
Let $f$ be a Sobolev function along a.e.\ curve with
$\weakgrad f\in
L^2(X,\mm)$, let $g:X\to\R$ be Lipschitz and let $\ppi$ be a test plan
concentrated on $\geo(X)$. Assume that 
\begin{equation}
\label{eq:asshorvert} 
\lim_{t\downarrow0}\frac{g(\gamma_0)-g(\gamma_t)}
{\sfd(\gamma_0,\gamma_t)}=\weakgrad
g(\gamma_0)
\qquad\text{in }L^2(\mathrm{Geo}(X),\ppi).
\end{equation}
Then
\begin{equation}\label{eq:derhorvert}
\limi_{t\downarrow0}\int\frac{f(\gamma_{t})-f(\gamma_0)}{t}\,\d\ppi(\gamma)
\geq\frac12\int\frac{\weakgrad g^2(\gamma_0)-\weakgrad{(g+\eps
f)}^2(\gamma_0)}\eps\, \d\ppi(\gamma)\qquad\forall\eps>0.
\end{equation}
\end{lemma}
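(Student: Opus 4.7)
The strategy is to apply the weak upper gradient inequality for $g+\eps f$ pointwise along a $\ppi$-a.e.\ geodesic and then use Young's inequality $ab\le \tfrac12(a^2+b^2)$ in a sharp form. By Remark~\ref{re:restr}, for $\ppi$-a.e.\ $\gamma\in\geo(X)$ and every $t\in(0,1)$ one has
\[
(g+\eps f)(\gamma_0)-(g+\eps f)(\gamma_t)\le \int_0^t \weakgrad{(g+\eps f)}(\gamma_s)\,|\dot\gamma_s|\,\d s.
\]
Applying Young to the integrand and using that $|\dot\gamma_s|\equiv|\dot\gamma_0|$ along a constant-speed geodesic, then isolating $\eps(f(\gamma_t)-f(\gamma_0))$, dividing by $t$, and integrating against $\ppi$ gives
\[
\eps\int\frac{f(\gamma_t)-f(\gamma_0)}{t}\,\d\ppi\ \ge\ \int\frac{g(\gamma_0)-g(\gamma_t)}{t}\,\d\ppi-\frac{1}{2t}\iint_0^t \weakgrad{(g+\eps f)}^2(\gamma_s)\,\d s\,\d\ppi-\frac12\int|\dot\gamma_0|^2\,\d\ppi.
\]

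Next I would pass to the $\liminf$ as $t\downarrow0$. Writing the first right-hand term as $\int|\dot\gamma_0|\,\tfrac{g(\gamma_0)-g(\gamma_t)}{\sfd(\gamma_0,\gamma_t)}\,\d\ppi$ and using hypothesis \eqref{eq:asshorvert} together with $|\dot\gamma_0|\in L^2(\ppi)$ (guaranteed by $\iint_0^1|\dot\gamma_s|^2\,\d s\,\d\ppi<\infty$), the limit equals $\int\weakgrad g(\gamma_0)|\dot\gamma_0|\,\d\ppi$. For the Cesaro average, Fubini yields $\tfrac1t\int_0^t\!\int \weakgrad{(g+\eps f)}^2\,\d(\rme_s)_\sharp\ppi\,\d s$; since $(\rme_s)_\sharp\ppi$ has density $\le C(\ppi)$ in $L^\infty(\mm)$ and converges narrowly to $(\rme_0)_\sharp\ppi$ as $s\downarrow 0$, the densities converge weakly-$*$ in $L^\infty(\mm)$ and hence pair continuously with the $L^1$-function $\weakgrad{(g+\eps f)}^2$, so the Cesaro average tends to $\int\weakgrad{(g+\eps f)}^2(\gamma_0)\,\d\ppi$. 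The third term is $t$-independent.

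The proof concludes via the algebraic identity
\[
\weakgrad g(\gamma_0)|\dot\gamma_0|-\tfrac12|\dot\gamma_0|^2 \ =\ \tfrac12\weakgrad g^2(\gamma_0)-\tfrac12\bigl(|\dot\gamma_0|-\weakgrad g(\gamma_0)\bigr)^2,
\]
combined with the saturation content of \eqref{eq:asshorvert}, which forces $|\dot\gamma_0|=\weakgrad g(\gamma_0)$ $\ppi$-a.e.\ (the weak upper gradient inequality for $g$ being attained infinitesimally at $\gamma_0$). This kills the quadratic correction, and dividing through by $\eps>0$ yields \eqref{eq:derhorvert}. The main obstacle is the delicate limit passage in the Cesaro averages, which relies crucially both on the uniform $L^\infty$ density bound of $(\rme_s)_\sharp\ppi$ afforded by the test-plan hypothesis and on the identification $|\dot\gamma_0|=\weakgrad g(\gamma_0)$; it is precisely this identification, combined with the identity above, that upgrades the crude Young bound into the sharp quadratic form appearing on the right-hand side of \eqref{eq:derhorvert}.
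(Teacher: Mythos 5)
Your overall skeleton is sound and close in spirit to the paper's: the pointwise upper-gradient inequality from Remark~\ref{re:restr} along constant-speed geodesics, the Ces\`aro averages handled via the uniform $L^\infty$ bound and weak$^*$ convergence of the densities of $(\e_s)_\sharp\ppi$, and the use of \eqref{eq:asshorvert} to pass to the limit in the $g$-term are all fine and essentially identical to the paper's steps. The mechanism differs at the crucial point, though. The paper never invokes Young's inequality: it squares the difference quotients, using $|G_t+\eps F_t|^2\geq G_t^2+2\eps G_tF_t$ with $G_t(\gamma)=\frac{g(\gamma_0)-g(\gamma_t)}{\sfd(\gamma_0,\gamma_t)}$, $F_t(\gamma)=\frac{f(\gamma_0)-f(\gamma_t)}{\sfd(\gamma_0,\gamma_t)}$, and then cancels $\int G_t^2\,\d\ppi$ \emph{exactly} against its limit $\int\weakgrad g^2\circ\e_0\,\d\ppi$ supplied by \eqref{eq:asshorvert}, so no quadratic defect appears at that stage. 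Your Young-inequality route instead produces the extra term $\frac12\int\bigl(|\dot\gamma_0|-\weakgrad g(\gamma_0)\bigr)^2\,\d\ppi$, which you then have to kill.

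This is where the genuine gap lies: your claim that \eqref{eq:asshorvert} ``forces'' $|\dot\gamma_0|=\weakgrad g(\gamma_0)$ $\ppi$-a.e.\ is not a valid deduction. The quotient in \eqref{eq:asshorvert} is normalized by $\sfd(\gamma_0,\gamma_t)$, so the speed of the plan never enters that hypothesis: for instance $g\equiv 0$ satisfies \eqref{eq:asshorvert} for \emph{every} geodesic test plan, while $|\dot\gamma_0|=\sfd(\gamma_0,\gamma_1)$ is arbitrary. Without the identity, your chain of inequalities only yields \eqref{eq:derhorvert} weakened by $-\frac1{2\eps}\int(|\dot\gamma_0|-\weakgrad g(\gamma_0))^2\,\d\ppi$, which is strictly less than what is claimed (and indeed, taking $g\equiv0$ and $f$ strictly decreasing along the curves of $\ppi$ shows the sharp inequality cannot hold without some such extra information). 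To be fair, the paper's own proof also uses $\weakgrad g(\gamma_0)=\sfd(\gamma_0,\gamma_1)$ $\ppi$-a.e.\ in its last step (to convert $-\int \weakgrad g(\gamma_0)F_t\,\d\ppi$ into $\int\frac{f(\gamma_t)-f(\gamma_0)}{t}\,\d\ppi$), asserting it as known; but that identity is imported from the only situation where the lemma is applied (Proposition~\ref{prop:derentr}, where $g=\varphi$ is a Kantorovich potential, $\ppi$ an optimal geodesic test plan, and Theorem~\ref{prop:potweak}, formula \eqref{eq:8}, provides exactly this equality), not derived from \eqref{eq:asshorvert}. So your argument becomes correct once you add $|\dot\gamma_0|=\weakgrad g(\gamma_0)$ $\ppi$-a.e.\ as an explicit additional assumption (available in the intended application via the metric Brenier theorem); as written, the step deducing it from \eqref{eq:asshorvert} is false.
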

\begin{proof}
Define the functions $F_t,\,G_t:\geo(X)\to\R\cup\{\pm\infty\}$ by
\[
\begin{split}
F_t(\gamma)&:=\frac{f(\gamma_0)-f(\gamma_t)}{\sfd(\gamma_0,\gamma_t)},\\
G_t(\gamma)&:=\frac{g(\gamma_0)-g(\gamma_t)}{\sfd(\gamma_0,\gamma_t)}.
\end{split}
\]
By \eqref{eq:asshorvert} 
it holds
\begin{equation}
\label{eq:uguale}
\int \weakgrad g^2\circ\e_0\,\d\ppi(\gamma)=\lim_{t\downarrow 0}\int  G_t^2\,\d\ppi.
\end{equation}
Since the measures $(\e_t)_\sharp\ppi\to(\e_0)_\sharp\ppi$ weakly in
duality with  $C(X)$ as
$t\downarrow 0$ and their densities with respect to $\mm$ are uniformly bounded, we obtain
that the densities are weakly$^*$ convergent in $L^\infty(X,\mm)$.
Therefore, using the fact that $\weakgrad{(g+\eps f)}^2\in
L^1(X,\mm)$ and taking into account Remark~\ref{re:restr} we obtain
\[
\begin{split}
\int&\weakgrad{(g+\eps f)}^2\circ\e_0\,\d\ppi(\gamma)=\int\weakgrad{(g+\eps f)}^2\,\d(\e_0)_\sharp\ppi
=\lim_{t\downarrow 0}\frac1t\int_0^t\int_X\weakgrad{(g+\eps f)}^2\,\d(\e_s)_\sharp\ppi\,\d s\\
&=\lim_{t\downarrow 0}\frac1t\iint_0^t\weakgrad{(g+\eps f)}^2(\gamma_s)\,\d s\,\d\ppi(\gamma)\geq
\lims_{t\downarrow 0}\int\left|\frac{(g+\eps f)(\gamma_0)-(g+\eps f)(\gamma_{t})}
{t\sfd(\gamma_0,\gamma_1)}\right|^2\,\d\ppi(\gamma)\\
&\geq \lims_{t\downarrow 0}\int G_t^2+2\eps G_tF_t \,\d\ppi.
\end{split}
\]
Subtracting this inequality from \eqref{eq:uguale} and dividing by
$2\eps$ we get
\[
\frac12\int \frac{\weakgrad g^2(\gamma_0)-\weakgrad{(g+\eps
f)}^2(\gamma_0)}\eps \,\d\ppi(\gamma)\leq\limi_{t\downarrow 0}-\int
G_t(\gamma)F_t(\gamma)\,\d\ppi(\gamma).
\]
We know that $G_t\to \weakgrad g\circ\e_0$ in $L^2(\geo(X),\ppi)$
and that $\weakgrad g(\gamma_0)=\sfd(\gamma_0,\gamma_1)$ for
$\ppi$-a.e.~$\gamma$. Also, by Remark~\ref{re:restr} and the fact
that $\ppi$ is a test plan we easily get
$\sup_{t\in[0,1]}\|F_t\|_{L^2(\sppi)}<\infty$. Thus it holds
\[
\begin{split}
\limi_{t\downarrow 0}-\int G_t(\gamma)F_t(\gamma)\,\d\ppi(\gamma)
&=\limi_{t\downarrow 0}-\int\sfd(\gamma_0,\gamma_1)F_t(\gamma)\,\d\ppi(\gamma)
=\limi_{t\downarrow 0}\int\frac{f(\gamma_t)-f(\gamma_0)}{t}\,\d\ppi(\gamma),
\end{split}
\]
which is the thesis.
\end{proof}

\subsection{Two important formulas}

\renewcommand{\ssigma}{\sigma}
\begin{proposition}[Derivative of $\frac12W_2^2$ along the heat flow]\label{prop:derw2}
Let $(\frho_t)\subset L^2(X,\mm)$ be a heat flow made of probability
densities. Then for every $\ssigma\in\prob X$, for a.e.~$t\in
(0,\infty)$ it holds:
\begin{equation}
\label{eq:derivataw2} \frac{\d}{\d
t}\frac12W_2^2(\frho_t\mm,\ssigma)=\intX\varphi_t\Delta\frho_t\,\d\mm,\qquad
\text{for any Kantorovich potential $\varphi$ from $\frho_t$ to
$\ssigma$.}
\end{equation}
\end{proposition}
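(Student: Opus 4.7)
I would use the Kantorovich duality \eqref{eq:dualitabase}: if $\varphi_{t_0}$ is a Kantorovich potential for $(\rho_{t_0}\mm,\sigma)$, then $\varphi_{t_0}$ is an \emph{optimal} competitor for the duality problem at time $t_0$ but only a \emph{sub-optimal} one at any other time $s$. Writing $\Phi(t):=\tfrac12 W_2^2(\rho_t\mm,\sigma)$, this yields the fundamental one-sided inequality
\[
\Phi(s)-\Phi(t_0)\ge\int_X \varphi_{t_0}\,(\rho_s-\rho_{t_0})\,\d\mm \qquad\forall\, s>0.
\]
Indeed, by \eqref{eq:dualitabase} we have $\Phi(s)\ge \int \varphi_{t_0}\,\d(\rho_s\mm)+\int \varphi_{t_0}^c\,\d\sigma$, while equality holds at $s=t_0$ because $\varphi_{t_0}$ is optimal there; subtracting gives the claim. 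Note that the integrand is well-defined because $\varphi_{t_0}$ is Lipschitz (hence bounded on the compact $X$).

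\textbf{Main computation.} Since $(\rho_t)$ is the $L^2$-gradient flow of $\C$, for a.e.\ $t_0>0$ the strong $L^2$-derivative of $t\mapsto\rho_t$ at $t_0$ equals $\Delta\rho_{t_0}$; cf.\ \eqref{eq:ODE}. Boundedness of $\varphi_{t_0}$ then allows one to divide the inequality above by $s-t_0$ and commute the limit with the integral. For $s\downarrow t_0$ one obtains
\[
\liminf_{s\downarrow t_0}\frac{\Phi(s)-\Phi(t_0)}{s-t_0}\ge \int_X \varphi_{t_0}\,\Delta\rho_{t_0}\,\d\mm,
\]
while for $s\uparrow t_0$ the division is by a negative quantity and the inequality flips:
\[
\limsup_{s\uparrow t_0}\frac{\Phi(s)-\Phi(t_0)}{s-t_0}\le \int_X \varphi_{t_0}\,\Delta\rho_{t_0}\,\d\mm.
\]
At any $t_0$ where $\Phi$ is differentiable, both sides equal $\Phi'(t_0)$, and the sandwich forces \eqref{eq:derivataw2}. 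Mass preservation (Proposition~\ref{prop:basecal}) gives $\int\Delta\rho_{t_0}\,\d\mm=0$, so the right-hand side is insensitive to the additive normalization of the Kantorovich potential, as required for the statement to be unambiguous.

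\textbf{Absolute continuity of $\Phi$.} This is the main technical point one has to pay attention to, and it guarantees differentiability a.e.. I would establish it via Kuwada's Lemma~\ref{le:kuwada}, which furnishes an upper bound on the metric speed of $t\mapsto \rho_t\mm$ in $(\prob X,W_2)$. The assumption $c\le \rho_0\le C$ in that lemma is removed by truncating the initial datum, $\rho_0^n:= c_n(\tfrac1n \vee \rho_0 \wedge n)$ with $c_n\to 1$ a normalization constant, applying the lemma to $(\rho^n_t)$, and passing to the limit using the $L^2$-contractivity of the heat flow together with lower semicontinuity of the Fisher-type integrand. Once $t\mapsto\rho_t\mm$ is locally absolutely continuous in $W_2$, the triangle inequality $|W_2(\rho_t\mm,\sigma)-W_2(\rho_s\mm,\sigma)|\le W_2(\rho_t\mm,\rho_s\mm)$ and the boundedness of $W_2$ on the compact space $X$ transfer absolute continuity to $\Phi=\tfrac12 W_2^2(\rho_\cdot\mm,\sigma)$, providing the missing differentiability a.e.\ on $(0,\infty)$ and completing the proof.
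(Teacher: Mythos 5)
Your core argument is exactly the paper's: fix $t_0$ where the map $t\mapsto\frho_t$ is strongly $L^2$-differentiable and $t\mapsto\frac12W_2^2(\frho_t\mm,\ssigma)$ is differentiable, use Kantorovich duality to note that $\varphi_{t_0}$ is optimal at $t_0$ and admissible at nearby times, obtain the one-sided inequality $\frac12W_2^2(\frho_{t_0+h}\mm,\ssigma)-\frac12W_2^2(\frho_{t_0}\mm,\ssigma)\geq\int_X\varphi_{t_0}(\frho_{t_0+h}-\frho_{t_0})\,\d\mm=h\int_X\varphi_{t_0}\Delta\frho_{t_0}\,\d\mm+o(h)$ via boundedness of the Lipschitz potential, and divide by $h$ of both signs to sandwich the derivative. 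The only place you diverge is the preliminary a.e.\ differentiability of $t\mapsto\frac12W_2^2(\frho_t\mm,\ssigma)$: the paper simply invokes Theorem~\ref{thm:main} (the identification of the heat flow with the EDE gradient flow of $\entv$, available under the standing $CD(K,\infty)$ assumption of that section), whereas you rederive the $W_2$-absolute continuity directly from Kuwada's Lemma~\ref{le:kuwada} together with a truncation of the initial datum and a limiting argument. Your route has the merit of not using any curvature assumption nor the identification theorem, at the price of the limit passage you only sketch (one needs a uniform bound on the Fisher term for the truncated flows --- available, e.g., from entropy dissipation plus the bound $\ent{\frho\mm}\le\log\|\frho\|_{L^2}^2$ --- and then weak $L^2$ compactness of the speed bounds); the paper's route is a one-line citation but is tied to the framework in which Theorem~\ref{thm:main} has been established. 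Either way the proof is correct.
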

\begin{proof}
Since $t\mapsto \frho_t\mm$ is an absolutely continuous curve w.r.t.
$W_2$ (recall Theorem~\ref{thm:main}), the derivative at the left
hand side of \eqref{eq:derivataw2} exists for a.e.~$t\in
(0,\infty)$. Also, for a.e.~$t\in (0,\infty)$ it holds $\lim_{h\to
0}\frac 1h(\frho_{t+h}-\frho_t)=\Delta\frho_t$, the limit being
understood in $L^2(X,\mm)$.

Fix $t_0$ such that the derivative of the Wasserstein distance
exists and the above limit holds and choose any Kantorovich
potential $\varphi_{t_0}$ for $(\frho_{t_0}\mm,\ssigma)$. We have
\[
\begin{split}
\frac{W_2^2(\frho_{t_0}\mm,\ssigma)}2&=\int_X\varphi_{t_0}\frho_{t_0}\,\d\mm+\int\varphi_{t_0}^c\,\d\ssigma\\
\frac{W_2^2(\frho_{t_0+h}\mm,\ssigma)}2&\geq\int_X\varphi_{t_0}\frho_{t_0+h}\,\d\mm+\int\varphi_{t_0}^c\,\d\ssigma.
\end{split}
\]
Therefore, since $\varphi_{t_0}\in L^\infty(X,\mm)$ we get
\[
\frac{W_2^2(\frho_{t_0+h}\mm,\ssigma)}2-\frac{W_2^2(\frho_{t_0}\mm,\ssigma)}2
\geq\intX\varphi_{t_0}(\frho_{t_0+h}-\frho_{t_0})\,\d\mm=
h\int_X\varphi_{t_0}\Delta\frho_{t_0}+o(h).
\]
Dividing by $h<0$ and $h>0$ and letting $h\to 0$ we get the thesis.
\end{proof}

\begin{proposition}[Derivative of the Entropy along a geodesic]\label{prop:derentr}
Let $(X,\sfd,\mm)$ be a strong $CD(K,\infty)$ space. Let
$\mu_0,\,\mu_1\in\prob X$, $\ppi\in\gopt(\mu_0,\mu_1)$ and $\varphi$
a Kantorovich potential for $(\mu_0,\mu_1)$. Assume that $\ppi$ is a
test plan and that $\mu_0\geq c\mm$ from some $c>0$ and denote by
$\hsigma_t$ the density of $\mu_t:=(\e_t)_\sharp\ppi$. Then
\begin{equation}
\label{eq:derentropia}
\limi_{t\downarrow 0}\frac{\entr{\mu_t}\mm-\entr{\mu_0}\mm}{t}\geq
\lim_{\eps\downarrow 0}\frac{\C(\varphi)-\C(\varphi+\eps\hsigma_0)}{\eps}
\end{equation}
\end{proposition}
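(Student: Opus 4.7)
The plan is to combine a pointwise convexity lower bound for the entropy with the horizontal/vertical comparison provided by Lemma~\ref{le:horver}, finishing with a first-order identification that is the main technical obstacle. Since $\ppi$ is a test plan, $\hsigma_0\le C(\ppi)$, while by assumption $\hsigma_0\ge c>0$, so $\log\hsigma_0$ is bounded. We may assume $\hsigma_0\in D(\C)$, otherwise $\C(\varphi+\eps\hsigma_0)=+\infty$ for every $\eps>0$ (since $\varphi\in D(\C)$ and $\C$ is convex) and the right-hand side of \eqref{eq:derentropia} is $-\infty$, making the inequality trivial. Under this assumption, Proposition~\ref{prop:chain} applied to $z\mapsto \log z$ on $[c,C(\ppi)]$ yields that $\log\hsigma_0$ is Sobolev along a.e.\ curve with $\weakgrad{\log\hsigma_0}=\weakgrad{\hsigma_0}/\hsigma_0\in L^2(X,\mm)$.

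Convexity of $u(z)=z\log z$ gives $u(\hsigma_t)-u(\hsigma_0)\ge (1+\log\hsigma_0)(\hsigma_t-\hsigma_0)$; integrating against $\mm$, using $\int(\hsigma_t-\hsigma_0)\,\d\mm=0$, and rewriting via $\mu_s=(\e_s)_\sharp\ppi$ gives
\[
\entr{\mu_t}{\mm}-\entr{\mu_0}{\mm}\ge \int\log\hsigma_0\,(\hsigma_t-\hsigma_0)\,\d\mm=\int\bigl(\log\hsigma_0(\gamma_t)-\log\hsigma_0(\gamma_0)\bigr)\,\d\ppi(\gamma).
\]
Next I would apply Lemma~\ref{le:horver} with $f:=\log\hsigma_0$ and $g:=\varphi$: $\varphi$ is Lipschitz as a Kantorovich potential on a compact space, $\log\hsigma_0$ has weak upper gradient in $L^2$ by the preliminary remark, $\ppi$ is a test plan concentrated on $\geo(X)$ by hypothesis, and the required $L^2(\geo(X),\ppi)$-convergence of $(\varphi(\gamma_0)-\varphi(\gamma_t))/\sfd(\gamma_0,\gamma_t)$ to $\weakgrad{\varphi}(\gamma_0)$ is exactly conclusion \eqref{eq:6} of the metric Brenier theorem (Theorem~\ref{prop:potweak}). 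Combining the two previous displays yields, for every $\eps>0$,
\[
\liminf_{t\downarrow 0}\frac{\entr{\mu_t}{\mm}-\entr{\mu_0}{\mm}}{t}\ge \frac{1}{2\eps}\int\bigl(\weakgrad{\varphi}^2-\weakgrad{(\varphi+\eps\log\hsigma_0)}^2\bigr)\,\hsigma_0\,\d\mm=:I(\eps).
\]

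The remaining step, and the main obstacle, is to pass to the limit $\eps\downarrow 0$ in $I(\eps)$ and identify it with $\lim_{\eps\downarrow 0}J(\eps)$ where $J(\eps):=(\C(\varphi)-\C(\varphi+\eps\hsigma_0))/\eps$. Since $\weakgrad{\cdot}$ is a seminorm, $\weakgrad{\cdot}^2$ is convex, and restriction to affine lines followed by integration shows that both $\eps\mapsto \int\weakgrad{(\varphi+\eps\log\hsigma_0)}^2\hsigma_0\,\d\mm$ and $\eps\mapsto \C(\varphi+\eps\hsigma_0)$ are convex; hence $I$ and $J$ are nonincreasing on $(0,\infty)$ and $\lim_{\eps\downarrow 0}I(\eps)=\sup_{\eps>0}I(\eps)$, and similarly for $J$. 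It thus suffices to show $\lim_{\eps\downarrow 0}I(\eps)\ge \lim_{\eps\downarrow 0}J(\eps)$, i.e.\ that the two first-order right-derivatives match. Heuristically this is a chain-rule statement: via $\weakgrad{\log\hsigma_0}=\weakgrad{\hsigma_0}/\hsigma_0$ the weight $\hsigma_0$ in front of $\weakgrad{(\varphi+\eps\log\hsigma_0)}^2$ exactly compensates the factor $1/\hsigma_0$ produced by differentiating the logarithm, matching the first-order expansion of $\C(\varphi+\eps\hsigma_0)$. Making this matching rigorous at the level of minimal weak upper gradients, where only subadditivity of the cotangent pairing is available, is the technical core of the argument; it should reduce, using subadditivity of $\weakgrad{\cdot}$ together with Proposition~\ref{prop:chain}, to the fact that both first-order derivatives compute the same intrinsic cotangent pairing attached to the Cheeger energy.
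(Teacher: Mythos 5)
Your argument is sound up to the last step, and up to that point it runs parallel to the paper: the same convexity bound $\entr{\mu_t}\mm-\entr{\mu_0}\mm\geq\int\bigl(\log\hsigma_0(\gamma_t)-\log\hsigma_0(\gamma_0)\bigr)\,\d\ppi$, the same reduction to $\hsigma_0\in D(\C)$, and the same use of Lemma~\ref{le:horver} fed by \eqref{eq:6} of the metric Brenier theorem. But the step you yourself flag as ``the technical core'' is a genuine gap, not a routine verification. After applying Lemma~\ref{le:horver} with $f=\log\hsigma_0$ you are left with $I(\eps)=\frac1{2\eps}\int\bigl(\weakgrad\varphi^2-\weakgrad{(\varphi+\eps\log\hsigma_0)}^2\bigr)\hsigma_0\,\d\mm$, and you need $\lim_{\eps\downarrow0}I(\eps)\geq\lim_{\eps\downarrow0}\frac{\C(\varphi)-\C(\varphi+\eps\hsigma_0)}\eps$. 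This is a statement about the \emph{first variation} of $g\mapsto\weakgrad g^2$ (a chain rule and a change of weight for the pairing $\eps\mapsto\weakgrad{(\varphi+\eps f)}^2$), and none of the tools of this paper deliver it: Proposition~\ref{prop:chain} is a chain rule for the \emph{modulus} $\relgrad{\phi(f)}=|\phi'(f)|\relgrad f$ only, and subadditivity of $\weakgrad\cdot$ gives one-sided bounds in the wrong direction for such an identification. The pairing is in general neither linear in $f$ nor obviously compatible with reweighting by $\hsigma_0$; making your heuristic rigorous requires the calculus of the objects $D^\pm f(\nabla g)$ and their chain rule, which is developed only in later work (\cite{Gigli12bis}) and is not available, nor reconstructible in a line or two, from what is proved here. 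So as written the proof does not close.

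The paper avoids this obstacle by never differentiating $\C$ ``with the weight $\hsigma_0$ attached''. It first replaces the logarithmic increments by increments of $\hsigma_0$ itself via the Taylor bound $\log b-\log a\geq\frac{b-a}a-\frac{|b-a|^2}{2c^2}$ (legitimate since $\hsigma_0\geq c$), checking that the quadratic error term, controlled through Remark~\ref{re:restr} by $\frac{\mathrm{diam}(X)^2}{2c^2}\iint_0^t\weakgrad{\hsigma_0}^2\circ\e_s\,\d s\,\d\ppi$, vanishes as $t\downarrow0$. The factor $1/\hsigma_0(\gamma_0)$ produced by this expansion is then absorbed into a \emph{reweighted} plan $\tilde\ppi:=\frac1{\hsigma_0\circ\e_0}\ppi$, which is again a test plan (here $\hsigma_0\geq c$ is used once more) and has the crucial property $(\e_0)_\sharp\tilde\ppi=\mm$. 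Applying Lemma~\ref{le:horver} with $\tilde\ppi$, $g=\varphi$ and $f=\hsigma_0$ (not $\log\hsigma_0$) then yields directly $\frac1{2\eps}\int_X\bigl(\weakgrad\varphi^2-\weakgrad{(\varphi+\eps\hsigma_0)}^2\bigr)\,\d\mm=\frac{\C(\varphi)-\C(\varphi+\eps\hsigma_0)}\eps$, i.e.\ exactly the right-hand side of \eqref{eq:derentropia}, with no first-variation chain rule needed. If you want to salvage your route, this Taylor-plus-reweighting device is the missing idea to import.
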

\begin{proof}
The convexity of $\C$ ensures that the limit at the right hand side
exists. From the fact that $\varphi$ is Lipschitz, it is not hard to
see that $\hsigma_0\notin D(\C)$ implies
$\C(\varphi+\eps\hsigma_0)=+\infty$ for any $\eps>0$ and in this case
there is nothing to prove. Thus, we assume that $\hsigma_0\in D(\C)$.

The convexity of $z\mapsto z\log z$ gives
\begin{equation}
\label{eq:fine1}
\begin{split}
\frac{\entr{\mu_t}\mm-\entr{\mu_0}\mm}t&\geq\int_X\log\hsigma_0\frac{\hsigma_t-\hsigma_0}t\,\d\mm=\int
\frac{\log(\hsigma_0\circ e_t)-\log(\hsigma_0\circ e_0)}t \,\d\ppi.
\end{split}
\end{equation}
Using the trivial inequality given by Taylor's formula
\[
\log b-\log a\geq\frac{b-a}a-\frac{|b-a|^2}{2c^2},
\]
valid for any $a,\,b\in [c,\infty)$, we obtain
\begin{equation}
\label{eq:fine2}
\begin{split}
\int&\frac{\log(\hsigma_0\circ \e_t)-\log(\hsigma_0\circ \e_0)}t
\,\d\ppi\geq\int\frac{\hsigma_0\circ
e_t-\hsigma_0\circ\e_0}{t\hsigma_0\circ
\e_0}\,\d\ppi-\frac1{2tc^2}\int|\hsigma_0\circ\e_t-\hsigma_0\circ\e_0|^2\,\d\ppi.
\end{split}
\end{equation}
Taking into account Remark~\ref{re:restr} and the fact that 
$|\dot \gamma_t|=\sfd(\gamma_0,\gamma_1)\le \mathrm{diam}(X)$ 
for a.e.\ $t\in (0,1)$ and $\ppi$-a.e.\ $\gamma$, the last term in this
expression can be bounded from above by
\begin{equation}
\label{eq:fine3}
\begin{split}
\frac1{2tc^2}\int\biggl(\int_0^t  \mathrm{diam}(X)\weakgrad{\hsigma_0}\circ\e_s\biggr)^2
\,\d
s\,\d\ppi\leq\frac{\mathrm{diam}(X)^2}{2c^2}\int\int_0^t\weakgrad{\hsigma_0}^2\circ\e_s\,\d
s\,\d\ppi,
\end{split}
\end{equation}
which goes to 0 as $t\to 0$.

Now let $S:\geo(X)\to\R$ be the Borel function defined by
$S(\gamma):=\hsigma_0\circ\gamma_0$ and define $
\tilde\ppi:=\frac1S\ppi. $ It is easy to check that
$(\e_0)_\sharp \tilde\ppi=\mm$, so that in particular $\tilde\ppi$ is a
probability measure. Also, the bound $\hsigma_0\geq c>0$ ensures that
$\tilde\ppi$ is a test plan. By definition we have
\[
\int\frac{\hsigma_0\circ e_t-\hsigma_0\circ\e_0}{t\hsigma_0\circ \e_0}\,\d\ppi
=\int\frac{\hsigma_0\circ e_t-\hsigma_0\circ\e_0}{t}\,\d\tilde \ppi.
\]
The latter equality and inequalities \eqref{eq:fine1},
\eqref{eq:fine2} and \eqref{eq:fine3} ensure that to conclude it is
sufficient to show that
\begin{equation}
\label{eq:perfinire}
\limi_{t\downarrow0}\int \frac{\hsigma_0\circ e_t-\hsigma_0\circ\e_0}{t}\,\d\tilde \ppi
\geq \lim_{\eps\downarrow 0}\frac{\C(\varphi)-\C(\varphi+\eps\hsigma_0)}{\eps}.
\end{equation}
Here we apply the key Lemma~\ref{le:horver}. Observe that
Theorem~\ref{prop:potweak} ensures that
\[
\weakgrad\varphi(\gamma_0)=\lim_{t\downarrow
0}\frac{\varphi(\gamma_0)-\varphi(\gamma_t)}{t}=\sfd(\gamma_0,\gamma_1)
\]
where the convergence is understood in $L^2(\ppi)$. Thus the same
holds for $L^2(\tilde\ppi)$ and the hypotheses of
Lemma~\ref{le:horver} are satisfied with $\tilde\ppi$ as test plan,
$g:=\varphi$ and $f:=\hsigma_0$. Equation \eqref{eq:derhorvert} then
gives
\[
\begin{split}
\limi_{t\downarrow0}\int \frac{\hsigma_0\circ e_t-\hsigma_0\circ\e_0}{t}\,\d\tilde \ppi&\geq
\lims_{\eps\downarrow 0}\frac12\int \frac{\weakgrad {
    \varphi}^2(\gamma_0)-\weakgrad{(\varphi+\eps \hsigma_0)}^2(\gamma_0)}
\eps\,\d\tilde\ppi(\gamma)\\
&=\lims_{\eps\downarrow 0}\frac12\int_X\frac{\weakgrad {\varphi}^2(x)-\weakgrad{(\varphi+\eps\hsigma_0)}^2(x)}\eps \,\d\mm(x),
\end{split}
\]
which concludes the proof.
\end{proof}

\section{Riemannian Ricci bounds}\label{se:riemricc}

We say that $(X,\sfd,\mm)$ has \emph{Riemannian Ricci curvature}
bounded below by $K\in\R$ (in short, it is a $RCD(K,\infty)$ space)
if any of the 3 equivalent conditions stated in the following
theorem is true.

\begin{theorem}\label{thm:riemannian}
Let $(X,\sfd,\mm)$ be a compact and normalized metric measure space
and $K\in\R$. The following three properties are equivalent.
\begin{itemize}
\item[(i)] $(X,\sfd,\mm)$ is a strong $CD(K,\infty)$ space (Definition~\ref{def:strongcd})
and the $L^2$-gradient flow of $\C$ is linear. 
\item[(ii)] $(X,\sfd,\mm)$ is a strong $CD(K,\infty)$ space (Definition~\ref{def:strongcd}) and Cheeger's
energy is quadratic, i.e.
\begin{equation}
\label{eq:cquadr}
2\big(\C(f)+\C(g)\big)=\C(f+g)+\C(f-g),\qquad\forall f,\,g\in
L^2(X,\mm).
\end{equation}
\item[(iii)]  $\supp(\mm)$ 
is geodesic and for any $\mu\in D(\entv)\subset
\prob X$ 
 there exists an ${\rm EVI}_K$-gradient flow for $\entv$ starting from $\mu$.
\end{itemize}
\end{theorem}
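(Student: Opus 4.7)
The plan is to prove the cyclic chain (i)$\Rightarrow$(ii)$\Rightarrow$(iii)$\Rightarrow$(i). The equivalence (i)$\Leftrightarrow$(ii) is a classical fact about $2$-homogeneous convex functionals on Hilbert spaces: since $\relgrad{(\lambda f)}=|\lambda|\relgrad f$, the Cheeger energy $\C$ is $2$-homogeneous, convex and lower semicontinuous on $L^2(X,\mm)$, and standard convex analysis shows that its subdifferential $-\Delta$ is a single-valued linear operator on $D(\Delta)$ if and only if $\C$ satisfies the parallelogram identity \eqref{eq:cquadr}. Via \eqref{eq:ODE}, linearity of $-\Delta$ is equivalent to linearity of the $L^2$-gradient flow of $\C$.

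The implication (ii)$\Rightarrow$(iii) is the core content and rests on combining Propositions~\ref{prop:derw2} and \ref{prop:derentr}. Fix $f_0\in L^2(X,\mm)$ with $c\le f_0\le C$ $\mm$-a.e.\ and $\intX f_0\,\d\mm=1$; by the maximum principle in Proposition~\ref{prop:basecal} the heat flow $(f_t)$ stays in $[c,C]$, so $f_t\mm\in D(\entv)$ for every $t$. For any bounded-density $\sigma\in D(\entv)$, let $\varphi_t$ be a Kantorovich potential for $(f_t\mm,\sigma)$; strong $CD(K,\infty)$ together with Proposition~\ref{prop:boundlinfty} supplies a test plan on which Proposition~\ref{prop:derentr} applies. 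The key step is to couple the identity
\begin{equation*}
\frac{\d}{\d t}\frac12 W_2^2(f_t\mm,\sigma)=\intX \varphi_t\,\Delta f_t\,\d\mm\qquad\text{(Proposition~\ref{prop:derw2})}
\end{equation*}
with the inequality
\begin{equation*}
\lim_{\eps\downarrow 0}\frac{\C(\varphi_t)-\C(\varphi_t+\eps f_t)}{\eps}\le \entv(\sigma)-\entv(f_t\mm)-\frac{K}{2}W_2^2(f_t\mm,\sigma),
\end{equation*}
obtained from $K$-convexity of $\entv$ along the chosen geodesic together with Proposition~\ref{prop:derentr}. Under hypothesis (ii), $\C$ is a quadratic form and the expansion $\C(\varphi_t+\eps f_t)=\C(\varphi_t)+\eps\mathcal E(\varphi_t,f_t)+\eps^2\C(f_t)$ for the associated symmetric bilinear form $\mathcal E$, combined with the identity $\mathcal E(\varphi_t,f_t)=-\intX \varphi_t\,\Delta f_t\,\d\mm$ (now an exact equality because $\Delta$ is linear), turns the left-hand side into exactly $\intX \varphi_t\,\Delta f_t\,\d\mm$. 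Summing the two displays yields the EVI$_K$ inequality for $(f_t\mm)$ against $\sigma$. Formulation (iii) of Proposition~\ref{prop:equivevi} (density-in-energy of the test points), density of bounded-density data in $D(\entv)$, and the contraction property in Remark~\ref{re:contr} then extend existence of EVI$_K$-flows to every $\mu\in D(\entv)$.

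For (iii)$\Rightarrow$(i), Proposition~\ref{prop:dansav} applied to $\entv$ gives $K$-geodesic convexity along every geodesic contained in $\overline{D(\entv)}=\Probabilities{X}$, and in particular along every curve $\mu^F_t=(\e_t)_\sharp(F\ppi)$ appearing in Definition~\ref{def:strongcd}, yielding strong $CD(K,\infty)$. By Proposition~\ref{prop:giuseppe} the EVI$_K$-flow is an EDE-gradient flow of $\entv$, and by Theorem~\ref{thm:main} together with the uniqueness in Theorem~\ref{thm:gfent} it coincides with the pushforward $(f_t\mm)$ of the $L^2$-heat flow $(f_t)$. To read off linearity of $(f_t)$, I would test EVI$_K$ against pairs of bounded probability densities and their convex combinations, exploit the $W_2$-contractivity $W_2(P_tf_0\mm,P_tg_0\mm)\le e^{-Kt}W_2(f_0\mm,g_0\mm)$ from Remark~\ref{re:contr}, and use the sharp identity \eqref{eq:slopesharp} for $|D^-\entv|^2$ jointly with Kuwada's Lemma~\ref{le:kuwada} and the horizontal/vertical Lemma~\ref{le:horver} to convert the $W_2$-contraction into a Bakry--\'Emery-type bound $\C(P_t f)\le e^{-2Kt}\C(f)$ that forces $\C$ to satisfy the parallelogram identity, whence (i) via the already established equivalence.

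The main obstacle is precisely this final linearity step: EVI$_K$ is nonlinear in the evolving variable, so one cannot superpose EVI$_K$-flows and linearity of the semigroup is not an immediate consequence. Extracting the quadratic nature of $\C$ from EVI$_K$ requires a delicate interplay between the Hilbertian $L^2$-structure on densities and the non-Hilbertian $W_2$-geometry on measures, crucially exploiting the sharp identification of Fisher information with the squared metric slope of $\entv$ via Lemma~\ref{le:horver} and Lemma~\ref{le:kuwada}.
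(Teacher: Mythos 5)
Your implications (i)$\Rightarrow$(ii)$\Rightarrow$(iii) essentially follow the paper: for (ii)$\Rightarrow$(iii) you combine Proposition~\ref{prop:derw2}, Proposition~\ref{prop:derentr} and the quadraticity of $\C$ exactly as the paper does (the paper phrases the exchange of the two directional derivatives of $\C$ as an $O(\eps)$ algebraic identity rather than through the bilinear form, but it is the same computation), and then Remark~\ref{re:contr} extends existence to all of $D(\entv)$. Two glosses there: the paper \emph{proves}, rather than asserts as classical, that linearity of the flow forces \eqref{eq:cquadr}, via the representation $\C(f)=\int_0^\infty\|\Delta P_t f\|^2_{L^2}\,\d t$ coming from \eqref{eq:perdopo}; and item (iii) also asserts that $\supp(\mm)$ is geodesic, which you never address (the paper deduces it from the strong $CD(K,\infty)$ assumption via \cite{Sturm06I} and compactness). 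Also note $\overline{D(\entv)}$ is $\prob{\supp(\mm)}$, not $\prob X$, in general.

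The genuine gap is the linearity half of (iii)$\Rightarrow$(i). The route you sketch --- pass from ${\rm EVI}_K$ to $W_2$-contraction, then by duality to a Bakry--\'Emery-type estimate, and conclude the parallelogram identity --- would fail: the Kuwada-type duality between Wasserstein contraction and gradient estimates is a statement about \emph{linear} Markov semigroups, so invoking it to prove linearity is circular, and in any case the bound $\C(P_tf)\le e^{-2Kt}\C(f)$ is a mere energy-decay estimate that holds for completely general, non-quadratic Cheeger energies (trivially for $K\le 0$), so it cannot force \eqref{eq:cquadr}. Moreover, the obstacle you name --- ``one cannot superpose ${\rm EVI}_K$-flows'' --- is precisely what the paper overcomes: given two ${\rm EVI}_K$-flows $(\mu^0_t),(\mu^1_t)$ and $\nu\in\prob X$, it takes $\ggamma\in\opt(\mu^\lambda_t,\nu)$ with $\mu^\lambda_t=(1-\lambda)\mu^0_t+\lambda\mu^1_t$, pushes $\mu^0_t,\mu^1_t$ forward through $\ggamma$ (Definition~\ref{def:pushplan}) to obtain $\nu^0,\nu^1$; then $W_2^2$ splits exactly at time $t$ \eqref{eq:distugualeevi}, is convex at time $t+h$ \eqref{eq:distdivevi}, and Proposition~\ref{prop:pushgamma}(iii) yields the entropy inequality \eqref{eq:convent}; combining these with the ${\rm EVI}_K$ property of $\mu^i_t$ against $\nu^i$ gives ${\rm EVI}_K$ for the convex combination against $\nu$, so convex combinations of ${\rm EVI}_K$-flows are again ${\rm EVI}_K$-flows. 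Uniqueness of ${\rm EVI}_K$-flows and the identification Theorem~\ref{thm:main} then make the heat flow additive on probability densities, and linearity on all of $L^2(X,\mm)$ follows from invariance under additive and multiplicative constants together with $L^2$-contractivity. Without an argument of this kind, your proof of (iii)$\Rightarrow$(i) is incomplete.
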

\begin{proof}\\*
${\mathbf{ (i)\Rightarrow (ii)}}$. Since the heat semigroup $P_t$ in
$L^2(X,\mm)$ is linear we obtain that $\Delta$ is a linear operator
(i.e. its domain $D(\Delta)$ is a subspace of $L^2(X,\mm)$ and
$\Delta:D(\Delta)\to L^2(X,\mm)$ is linear). Since $t\mapsto
\C(P_t(f))$ is locally Lipschitz, tends to 0 as $t\to\infty$ and
$\partial_t\C(P_t(f))=-\|\Delta P_t(f)\|^2_{L^2}$ for a.e.~$t>0$
(see \eqref{eq:perdopo}), we have
\[
\C(f)=\int_0^\infty\|\Delta P_t(f)\|^2_{L^2(X,\smm)}\,\d t.
\]
Therefore $\C$, being an integral of quadratic forms, is a quadratic
form. Specifically, for any $f,\,g\in L^2(X,\mm)$ it holds
\[
\begin{split}
\C(f+g)+\C(f-g)&=\int_0^\infty\|\Delta P_t(f+g)\|^2_{L^2(X,\smm)}+\|\Delta P_t(f-g)\|^2_{L^2(X,\smm)}\,\d t\\
&=\int_0^\infty\|\Delta P_t(f)+\Delta P_t(g)\|^2_{L^2(X,\smm)}+\|\Delta P_t(f)-\Delta P_t(g)\|^2_{L^2(X,\smm)}\,\d t\\
&=\int_0^\infty2\|\Delta P_t(f)\|^2_{L^2(X,\smm)}+2\|\Delta P_t(g)\|^2_{L^2(X,\smm)}\,\d t\\
&=2\C(f)+2\C(g).
\end{split}
\]
${\mathbf{(ii)\Rightarrow (iii)}}$. 
By \cite[Remark 4.6(iii)]{Sturm06I}
$(\supp(\mm),\sfd)$ is a length space 
and therefore it is also
geodesic, since $X$ is compact.

Thanks to Remark~\ref{re:contr}
it is sufficient to prove that a gradient flow in the ${\rm EVI}_K$
sense exists for an initial datum $\mu_0\ll\mm$ with density bounded
away from 0 and infinity. Let $\frho_0$ be this density, $(\frho_t)$
the heat flow starting from it and recall that from the maximum
principle \ref{prop:basecal} we know that the $\frho_t$'s are far
from 0 and infinity as well for any $t>0$. Fix a reference
probability measure $\ssigma$ with density bounded away from 0 and
infinity as well. For any $t\geq 0$ pick a test plan $\ppi_t$
optimal for $(\frho_t\mm,\ssigma)$. Define
$\ssigma_t^s:=(\rme_s)_\sharp \pi_t$.

We claim that for a.e.~$t\in (0,\infty)$ it holds
\begin{equation}
\label{eq:evilocal} \frac{\d}{\d
t}\frac12W_2^2(\frho_t\mm,\sigma\mm)\leq \limi_{s\downarrow
0}\frac{\ent{\sigma_t^s}-\ent{\sigma_t^0}}s.
\end{equation}
Let $\varphi_t$ be a Kantorovich potential for
$\frho_t\mm,\sigma\mm$. By Proposition~\ref{prop:derw2} we know that
for a.e.~$t\in (0,\infty)$ it holds
\[
\frac{\d}{\d t}\frac12W_2^2(\frho_t\mm,\sigma\mm)
=\intX\varphi\Delta\frho_t\,\d\mm\leq \lim_{\eps\downarrow 0}\frac{\C(\frho_t-\eps\varphi_t)-\C(\frho_t)}{\eps},
\]
while from Proposition~\ref{prop:derentr} we have that for any $t>0$
it holds
\[
\limi_{s\downarrow 0}\frac{\ent{\sigma_t^s}-\ent{\sigma_t^0}}s
\geq \lim_{\eps\downarrow 0}\frac{\C(\varphi_t)-\C(\varphi_t+\eps\frho_t)}{\eps}.
\]
Here we use the fact that $\C$ is quadratic. Indeed in this case simple algebraic manipulations show that
\[
\frac{\C(\frho_t-\eps\varphi_t)-\C(\frho_t)}{\eps}=\frac{\C(\varphi_t)-\C(\varphi_t+\eps\frho_t)}{\eps}
+O(\epsilon),\qquad\forall t>0,
\]
and therefore \eqref{eq:evilocal} is proved.

Now notice that the $K$-convexity of the entropy yields
\[
\limi_{s\downarrow 0}\frac{\ent{\sigma_t^s}-\ent{\sigma_t^0}}s\leq
\ent\ssigma-\ent{\frho_t\mm}-\frac K2W_2^2(\frho_t\mm,\ssigma),
\]
and therefore we have
\[
\frac{\d}{\d
t}\frac12W_2^2(\frho_t\mm,\sigma\mm)+\ent{\frho_t\mm}+\frac
K2W_2^2(\frho_t\mm,\ssigma)\leq\ent\ssigma,\qquad \text{for a.e.
$t\in (0,\infty)$.}
\]
By Proposition~\ref{prop:equivevi} we conclude.\\*

${\mathbf{ (iii)\Rightarrow (i)}}$. Since $(\supp(\mm),\sfd)$ is geodesic, so
is $(\overline{D(\entv)},W_2)$, 
which together with existence of ${\rm
EVI}_K$-gradient flows for $\entv$ yields, via
Proposition~\ref{prop:dansav}, $K$-geodesic convexity of $\entv$
along all geodesics in $\overline{D(\entv)}$. 
In particular, $(X,\sfd,\mm)$ is a
strong $CD(K,\infty)$ space.

We turn to the linearity. Let $(\mu_t^0)$, $(\mu^1_t)$ be two ${\rm
EVI}_K$-gradient flows of the relative entropy and, for $\lambda\in
(0,1)$ fixed, define
$\mu^\lambda_t:=(1-\lambda)\mu^0_t+\lambda\mu^1_t$.

We claim that $(\mu_t)$ is an ${\rm EVI}_K$-gradient flow of
$\entv$. To prove this, fix $\nu\in\prob X$, $t>0$ and an optimal
plan $\ggamma\in\opt(\mu_t^\lambda,\nu)$. Since
$\mu^i_t\ll\mu^\lambda_t=\pi^1_\sharp\ggamma$ for $i=0,1$ we can
define, as in Definition~\ref{def:pushplan}, the plans
$\ggamma_{\mu^i_t}\in\prob{X^2}$ and the measures
$\nu^i:=\ggamma_\sharp\mu^i_t$, $i=0,1$. Since
$\supp(\gamma_{\mu^i_t})\subset\supp(\ggamma)$, we have that
$\gamma_{\mu^i_t}\in\opt(\mu^i_t,\nu^i)$, therefore from
$\ggamma=(1-\lambda)\ggamma_{{\mu^0_t}}+\lambda\ggamma_{\mu^1_t}$ we
deduce
\begin{equation}
\label{eq:distugualeevi}
W_2^2(\mu_t^\lambda,\nu)=(1-\lambda)W_2^2(\mu^0_t,\nu^0)+\lambda
W_2^2(\mu^1_t,\nu^1).
\end{equation}
On the other hand, from the convexity of the squared Wasserstein
distance we immediately get that
\begin{equation}
\label{eq:distdivevi}
W_2^2(\mu^\lambda_{t+h},\nu)\leq(1-\lambda) W_2^2(\mu^0_{t+h},\nu^0)
+\lambda W_2^2(\mu^1_{t+h},\nu^1),\qquad\forall h>0.
\end{equation}
Furthermore, recalling $(iii)$ of Proposition~\ref{prop:pushgamma},
we get
\begin{equation}
\label{eq:convent}
\entr{\mu^\lambda_t}\mm-\entr\nu\mm\leq(1-\lambda)\big(\entr{\mu^0_t}\mm-\entr{\nu^0}\mm\big)
+\lambda\big(\entr{\mu^1_t}\mm-\entr{\nu^1}\mm\big).
\end{equation}
The fact that $(\mu^0_t)$ and $(\mu^1_t)$ are ${\rm EVI}_K$-gradient
flows for $\entv$ (see in particular the characterization $(iii)$
given in Proposition~\ref{prop:equivevi}) in conjunction with
\eqref{eq:distugualeevi}, \eqref{eq:distdivevi} and
\eqref{eq:convent} yield
\begin{equation}
\label{eq:evilin}
\lims_{h\downarrow 0}\frac{W_2^2(\mu^\lambda_{t+h},\nu)-W_2^2(\mu^\lambda_t,\nu)}2
+\frac K2W_2^2(\mu^\lambda_t,\nu)+\entr{\mu^\lambda_t}\mm\leq\entr\nu\mm.
\end{equation}
Since $t>0$ and $\nu\in\prob X$ were arbitrary, we proved that
$(\mu^\lambda_t)$ is a ${\rm EVI}_K$-gradient flow of $\entv$ (see
again $(iii)$ of Proposition~\ref{prop:equivevi}).

Thus, recalling the identification of gradient flows, we proved that
the $L^2$-heat flow is additive in $D(\entv)$. Since the heat flow
in $L^2(X,\mm)$ commutes with additive and multiplicative constants,
it is easy to get from this linearity in the class of bounded
functions. By $L^2$ contractivity, linearity extends to the whole of
$L^2(X,\mm)$.
\end{proof}

We conclude by discussing some basic properties of the spaces with
Riemannian Ricci curvature bounded from below.

We start observing that Riemannian manifolds with Ricci curvature
bounded below by $K$ are $RCD(K,\infty)$ spaces, as they are non
branching $CD(K,\infty)$ spaces and the heat flow is linear on them.
Also, from the studies made in \cite{Petrunin10}, \cite{ZZ1},
\cite{Ohta09} and \cite{Gigli-Ohta10} we also know that finite
dimensional Alexandrov spaces with curvature bounded from below are
$RCD(K,\infty)$ spaces as well. On the other side, Finsler manifolds
are ruled out, as it is known (see for instance
\cite{Ohta-Sturm09}) that the heat flow is linear on a Finsler
manifold if and only if the manifold is Riemannian.

The stability of the $RCD(K,\infty)$ notion can be deduced by the
stability of ${\rm EVI}_K$-gradient flows w.r.t.
$\Gamma$-convergence of functionals, which is an easy consequence of
the integral formulation in $(ii)$ of
Proposition~\ref{prop:equivevi}.

Hence $RCD(K,\infty)$ spaces have the same basic properties of $CD(K,\infty)$ spaces,
which gives to this notion the right of being called a synthetic (or weak) notion of Ricci curvature bound.

The point is then to understand the additional analytic/geometric
properties of these spaces, which come mainly by the addition of
linearity condition. A first consequence is that the heat flow
contracts, up to an exponential factor, the distance $W_2$, i.e.
\[
W_2(\mu_t,\nu_t)\leq e^{-Kt} W_2(\mu_0,\nu_0),\qquad\forall t\geq 0,
\]
whenever $(\mu_t),\,(\nu_t)\subset\probt X$ are gradient flows of
the entropy.

By a duality argument (see \cite{Kuwada10}, \cite{GigliKuwadaOhta10},
\cite{Ambrosio-Gigli-Savare11bis}), this property implies the
Bakry-Emery gradient estimate
\[
\weakgrad{{\sf h}_t(f)}^2(x)\leq e^{-2Kt}{\sf
h}_t(\weakgrad{f}^2)(x),\qquad\text{for $\mm$-a.e.~$x\in X$,}
\]
for all $t>0$, where ${\sf h}_t:L^2(X,\mm)\to L^2(X,\mm)$ is the
heat flow seen as gradient flow of $\C$. If $(X,\sfd,\mm)$ is
doubling and supports a local Poincar\'e inequality, then also the
Lipschitz regularity of the heat kernel is deduced (following an
argument described in \cite{GigliKuwadaOhta10}).

Also, since in $RCD(K,\infty)$ spaces $\C$ is a quadratic form, if
we define
\[
\mathcal E(f,g):=\C(f+g)-\C(f)-\C(g),\qquad\forall f,g\in W^{1,2}(X,\sfd,\mm),
\]
we get a closed Dirichlet form on $L^2(X,\mm)$ (closure follows from
the $L^2$-lower semicontinuity of $\C$). Hence it is natural to
compare the calculus on $RCD(K,\infty)$ spaces with the abstract one
available for Dirichlet forms (see \cite{Fukushima80}). The picture
here is pretty clear and consistent.  Recall that to any $f\in
D(\mathcal E)$ one can associate the energy measure $[f]$ defined by
\[
[f](\varphi):=-\mathcal E(f,f\varphi)+\mathcal E(f^2/2,\varphi).
\]
Then it is possible to show that the energy measure coincides with
$\relgrad f^2\mm$. Also, the distance $\sfd$ coincides with the
intrinsic distance $\sfd_{\mathcal E}$ induced by the form, defined
by
\[
\sfd_{\mathcal E}(x,y):=\sup\Big\{| g(x)-g(y)|\ :\ g\in D(\mathcal E)\cap C(X),\ [g]\leq\mm\Big\}.
\]
Taking advantage of these identification and of the locality of
$\mathcal E$ (which is a consequence of the locality of the notion
$\relgrad f$), one can also see that on $RCD( K,\infty)$ spaces a
continuous Brownian motion with continuous sample paths associated
to ${\sf h}_t$ exists and is unique.

Finally, for $RCD(K,\infty)$ spaces it is possible to prove tensorization
and globalization properties which are in line with those available for $CD(K,\infty)$ spaces.

\def\cprime{$'$}

\end{document}